\documentclass[runningheads]{llncs}

\usepackage{authblk}
\usepackage{stmaryrd}

\providecommand{\keywords}[1]{\smallskip\noindent\textbf{\textit{Keywords:}} #1}

\usepackage{graphicx}
\usepackage{tikz}
\usetikzlibrary{fit,patterns,decorations.pathmorphing,decorations.pathreplacing,calc,arrows}

\usepackage{amsmath,amssymb}
\usepackage[inline]{enumitem}

\usepackage{thmtools}

\usepackage[hidelinks]{hyperref}
\usepackage[capitalize,nameinlink,sort]{cleveref}

\usepackage{etoolbox}%

\usepackage[T1]{fontenc}
\usepackage{newpxtext}

\usepackage[textsize=scriptsize]{todonotes}

\DeclareMathOperator{\Fac}{Fac} 
\DeclareMathOperator{\N}{\mathbb{N}}

\DeclareMathOperator{\Sp}{Sp}
\newcommand{\infw}[1]{\mathbf{#1}}

\newcommand{\bc}[2]{
	\expandafter\ifstrequal\expandafter{#2}{1}{%
		 \mathsf{a}_{#1} }{%
		 \mathsf{b}_{#1}^{(#2)} }
             }
             
\makeatletter
\providecommand*{\shuffle}{%
  \mathbin{\mathpalette\shuffle@{}}%
}
\newcommand*{\shuffle@}[2]{%
  \sbox0{$#1\vcenter{}$}%
  \kern .15\ht0 
  \rlap{\vrule height .25\ht0 depth 0pt width 2.5\ht0}%
  \raise.1\ht0\hbox to 2.5\ht0{%
    \vrule height 1.75\ht0 depth -.1\ht0 width .17\ht0 %
    \hfill
    \vrule height 1.75\ht0 depth -.1\ht0 width .17\ht0 %
    \hfill
    \vrule height 1.75\ht0 depth -.1\ht0 width .17\ht0 %
  }%
  \kern .15\ht0 
}
\makeatother

\begin{document}

\title{Binomial Complexity of Multidimensional Arrays}
%
%
\author{Mehdi Golafshan \and
Michel Rigo\thanks{Supported by the FNRS Research grant T.196.23 (PDR)}\orcidID{0000-0001-7463-8507}}
\authorrunning{M. Golafshan, M. Rigo}

\institute{Univ. of Li\`ege, Dept. of Math., All\'ee de la d\'ecouverte 12 (B37), B-4000 Li\`ege, Belgium
  \email{
    mgolafshan@uliege.be, M.Rigo@uliege.be}}

\maketitle

\begin{abstract}
  Binomial coefficients for multidimensional arrays count occurrences of particular configurations. For the sake of presentation, the emphasis is put on column-binomial coefficients for two-dimensional finite arrays. In this article, we first show that these coefficients can be computed through some Magnus transform. 

  To get structural and combinatorial information on arrays, we then define \((k,\ell)\)-binomial equivalence for finite arrays and, from it, the \((k,\ell)\)-binomial complexity function of an infinite array. Roughly, two finite arrays are \((k,\ell)\)-binomially equivalent when they share the same number of subarrays of size at most $k\times \ell$.  

We obtain general results on the \((k,\ell)\)-binomial complexity of infinite arrays coding direct products of infinite words. Our main theorem gives an exact formula for the \((k,\ell)\)-binomial complexity of the two-dimensional Thue--Morse array. To that end, we closely examine the action of the bit-wise complement on the \(k\)-binomial equivalence classes of the factors of the Thue--Morse word.
\end{abstract}

\keywords{Binomial coefficient of words ; Binomial complexity ; Multidimensional arrays ; Magnus transform ; Thue--Morse sequence.}

\section{Introduction}

Let $\mathcal{A}$ be a finite alphabet. Recall that the {\em binomial coefficient} of two words $u=a_1\cdots a_k$ and $v=b_1\cdots b_\ell$ (where $a_i,b_j$  are letters in $\mathcal{A}$) is defined by
\begin{align}
  \binom{u}{v}&=\# \{1\le i_1<\cdots <i_{\ell}\le k \mid a_{i_1}\cdots a_{i_{\ell}}=v\}\label{eq:bin} \\
  &= \sum_{1\le i_1<\cdots <i_\ell\le k} \delta_{a_{i_1},b_1}\cdots \delta_{a_{i_\ell},b_\ell}.\nonumber
\end{align}
 This function counts the number of times $v$ appears as a subword, i.e., a subsequence, of $u$. For further details, see, for instance,  \cite[Chap.~6]{Lothaire1997}.

    The Kronecker delta can be thought of as a zero-dimensional binomial coefficient. Roughly speaking, letters are $0$-dimensional objects corresponding to points and words are $1$-dimensional objects corresponding to segments. Such a trivial observation permits us to introduce our object of study in a natural way. As binomial coefficients of words \eqref{eq:bin} are built from $0$-dimensional coefficients, $k$-dimensional coefficients are built from $(k-1)$-dimensional ones. The following definition of binomial coefficients of multidimensional arrays was first considered in \cite{GolafshanRigo}.

    Let $k\ge 1$ and $M$ be a {\em $k$-dimensional array} over $\mathcal{A}$ with dimension $(d_1,\ldots,d_k)$, i.e., a function from $\{1,\ldots,d_1\}\times\cdots\times\{1,\ldots,d_k\}$ to $\mathcal{A}$. For each $j\in\{1,\ldots,d_k\}$, we let $M_j$ denote the $(k-1)$-dimensional {\em section} (or slice) of $M$ where the last component is set to $j$. Hence, $M_j$ is a $(k-1)$-dimensional array of dimension $(d_1,\ldots,d_{k-1})$ where
$$M_j[i_1,\ldots,i_{k-1}]=M[i_1,\ldots,i_{k-1},j].$$
In particular, if $k=1$, then $M_j$ denotes the $j^{\text{th}}$ letter of the word $M$ ; if $k=2$, then $M_j$ is the $j^{\text{th}}$ column of the two-dimensional array $M$.

\begin{definition}[{\cite[Def.~2]{GolafshanRigo}}]
Let $M \in \mathcal{A}^{d_1\times \cdots \times d_k}$ and $P\in \mathcal{A}^{e_1\times \cdots \times e_k}$ be $k$-dimensional arrays.  
The {\em binomial coefficient} of $M$ and $P$ is given by
\[
  \binom{M}{P}=\sum_{1\le i_1< \cdots <i_{e_k}\le d_k}\binom{M_{i_1}}{P_1}\cdots\binom{M_{i_{e_k}}}{P_{e_k}}
\]
where, on the r.h.s., we have  binomial coefficients of $(k-1)$-dimensional section arrays.
\end{definition}

By convention, \(\binom{M}{P}=0\) whenever $d_j<e_j$ for some $j$ and \(\binom{M}{\varepsilon}=1\) for an empty array.

\begin{remark}\label{rem:order}
  The order in which consecutive sections are recursively performed is important. For convenience, we have chosen to always fix the last component to reduce the dimension by one. This choice is arbitrary: any other ordering among the $k!$ available ones would lead to a different counting function (but with similar properties). This dependence on the ordering may be seen as a limitation of our definition. However, no satisfactory generalization of the notion of subsequence exists in a multidimensional setting. For instance in two dimensions, one may think of various alternatives: submatrices obtained by selecting a subset of rows and columns, submatrices made of contiguous rows and columns, connected shapes such as paths or polyominoes.

  Our aim is to preserve classical combinatorial identities such as Pascal's identity recalled below, see \cref{pro:pascal}. A justification is given in this article by \cref{pro:magnus} where it is shown that these coefficients can be computed by a convenient Magnus transformation. It is indeed classical to define binomial coefficients of words from the Magnus transformation, see \cite{Reutenauer}. We show that these coefficients count subarrays of a particular type, see \cref{def:subarray} where constant-length subsequences are extracted from a selection of columns.
\end{remark}

For the sake of presentation, we consider two-dimensional arrays and coefficients. In that case, we have the following definition. Although the concepts we  develop apply to both the rows and columns,
 we focus on columns for concise notation; analogous definitions for rows are straightforward.

\begin{definition}\label{def:main}
Let $ M=
  \begin{pmatrix}
    M_1& \cdots &M_m
  \end{pmatrix}$
and 
$
  P=
  \begin{pmatrix}
    P_1& \cdots &P_p
  \end{pmatrix}
$ be two-dimensional arrays. We define the {\em column-binomial coefficient}, simply the {\em binomial coefficient}, of the arrays $M$ and $P$ by
\[
  \binom{M}{P}=\sum_{1\le j_1< \cdots <j_p\le m}\binom{M_{j_1}}{P_1}\cdots\binom{M_{j_p}}{P_p}
\]
where on the r.h.s. we have classical binomial coefficients of (unidimensional) words, with columns $M_{j_i}$ and $P_i$  interpreted as words.  
\end{definition}
In particular, if $m<p$ or if the number of rows of $M$ is less than that of $P$, then $\binom{M}{P}$ is zero. Note that when $m=p=1$, this reduces to the usual binomial coefficient of words. 

\subsection{Our contributions}

After recalling in \cref{sec:prelim} basic results from \cite{GolafshanRigo} on binomial coefficients for arrays, we introduce in \cref{sec:magnus} the two-dimensional Magnus transform of an array~$M$. It is a formal series in $\mathbb{N}\langle\langle\mathcal{A}^{**}\rangle\rangle$ such that
\[
  \mu(M)=\sum_{P\in \mathcal{A}^{**}} \binom{M}{P} P.
\]
The coefficient of $P$ in $\mu(M)$ counts, as expected, the number of times $P$ appears as a particular subarray of $M$ where constant-length subsequences are extracted from a selection of columns.

In \cref{sec:binom}, we introduce the main definitions of \((k,\ell)\)-binomial equivalence for finite arrays and, from it, the \((k,\ell)\)-binomial complexity $\bc{\infw{M}}{k,\ell}$ of an infinite array~$\infw{M}$. This function captures information about the combinatorial structure and complexity of the array and its configurations. 
In \cref{sec:bound}, we show that if $\infw{M}$ is the direct product $\infw{s}\times\infw{y}$ of two infinite words, then $\bc{\infw{M}}{k,\ell}(m,n)=\bc{\infw{s}}{k}(m).\bc{\infw{y}}{\ell}(n)$. \cref{thm:bounded_complexity} studies a more general situation than direct products.

 Let $k\ge 1$, $\ell\ge 2$ and $m,n\ge 1$. \cref{thm:mainTM} gives the exact formula of the $(k,\ell)$-binomial complexity of the two-dimensional Thue--Morse word~$\infw{T}$, an example of $2$-automatic array \cite{Salon}, in terms of the binomial complexity of the Thue--Morse word $\infw{t}=01101001\cdots$:
\[
  \bc{\infw{T}}{k,\ell}(m,n)=\left\{\begin{array}{ll}
   \frac{\bc{\infw{t}}{k}(m).\bc{\infw{t}}{\ell}(n) }{2}, & \text{ if } m\not\equiv 0\pmod{2^k};\\
        \frac{(\bc{\infw{t}}{k}(m)-1).\bc{\infw{t}}{\ell}(n)}{2}+1, & \text{ if }m\equiv 0\pmod{2^k}.\\
      \end{array}\right.
  \]
In particular, for fixed $k\ge 1$ and $\ell\ge2$, the function $\bc{\infw{T}}{k,\ell}(m,n)$ is ultimately periodic in each coordinate, with periods $2^k$ and $2^\ell$, respectively. To get that result, we develop new tools and techniques related to $k$-binomial equivalence of words. Consider the quotient of the set of factors of $\infw{t}$ by the $k$-binomial equivalence relation. In \cref{sec:bitwise}, we show that the bit-wise complement maps every word of a $k$-binomial equivalence class to a word of another class and conversely, or it fixes a class. If $2^k|n$, then exactly one $k$-binomial equivalence class is fixed by bit-wise complement. Otherwise, no equivalence class is fixed. 
\subsection{Related works}

\cref{sec:bitwise} complements the study of the binomial complexity of the Thue--Morse word initiated in \cite{LejeuneLeroyRigo-2020}. See also \cite{GRW,RigoStipulantiWhiteland-2024,ChenWen-2019,ChenWen-2024} for generalizations to larger alphabets. Indeed, the Thue--Morse word has attracted a lot of attention and is a main object of study in combinatorics, see also \cite{Starosta-2012}. Generalized Thue--Morse words are automatic sequences and thus have linear factor complexity, while their $k$-binomial complexity is ultimately periodic with a period depending on $k$ (hence, bounded by a constant). See \cref{thm:lejeune}.

In higher dimension, the two-dimensional Thue--Morse array is one of the simplest examples of an aperiodic tiling. This has applications and connects directly to the theory of quasicrystals, ergodic properties, diffraction spectra, and substitution systems. See, for instance, \cite{Baake,Colle-2023,Li2025,Robinson}.

Finally, the study of repetitive configurations and, in particular periodicity, plays a prominent role and $(k,\ell)$-binomial complexity gives some structural information on the encountered configurations in arrays \cite{QuasZamboni}. Puzynina proves an abelian analogue of Nivat’s conjecture \cite{Kra,KariSzabados} for recurrent two-dimensional arrays \cite{Puzynina-2019}. If, for every $m,n\ge 1$, the set of $m\times n$ factors has at most two abelian classes, i.e., the $(1,1)$-binomial complexity is bounded by $2$, then the infinite array has a periodicity vector. Conversely, the  abelian complexity of any aperiodic recurrent array must exceed $2$ for infinitely many sizes. She also shows that abelian complexity $1$ can occur at some sizes even for aperiodic arrays (with a precise lattice periodicity), and constructs recurrent aperiodic examples with abelian complexity uniformly bounded by $3$.

\section{Preliminaries}\label{sec:prelim}

Alphabets are denoted by calligraphic letters, e.g., $\mathcal{A}$. The set of finite one-dimensional (resp. two-dimensional) words is denoted by $\mathcal{A}^*$ (resp. $\mathcal{A}^{**}$).  The set of infinite one-dimensional (resp. two-dimensional) words is denoted by $\mathcal{A}^\mathbb{N}$ (resp. $\mathcal{A}^{\mathbb{N}\times\mathbb{N}}$). We write $\mathcal{A}^{\le \ell}$ (resp. $\mathcal{A}^\ell$) for the set of words of length at most (resp. exactly)~$\ell$ ; and $\mathcal{A}^{m\times \ell}$ denotes the set of $m\times \ell$ arrays. 

Finite one-dimensional words are denoted by lowercase letters, while infinite words are denoted by boldface lowercase letters, e.g., $u,v \in\mathcal{A}^*$ and $\infw{x},\infw{t}\in\mathcal{A}^\mathbb{N}$. Finite two-dimensional arrays are denoted by uppercase letters and infinite two-dimensional arrays by their boldface counterparts, e.g., $U,M\in\mathcal{A}^{**}$ and $\infw{U}, \infw{M}\in\mathcal{A}^{\mathbb{N}\times\mathbb{N}}$.  The one-dimensional Thue–Morse morphism (resp. word) is denoted by $\varphi$ (resp. $\infw{t}$), and its two-dimensional analogue by the uppercase Greek letter~$\Phi$ (resp. $\infw{T}$).

The horizontal concatenation of two arrays is defined, from left to right, when they have the same number of rows. Similarly, we define vertical concatenation, from top to bottom. This choice is consistent with the usual convention that rows and columns are indexed starting from the upper-left corner. 

\begin{definition}
Let $U = (u_{i,j})\in \mathcal{A}^{m \times n}$ and $V = (v_{i,j})\in \mathcal{A}^{m \times p}$ be two arrays over an alphabet $\mathcal{A}$. The \emph{horizontal concatenation} of $U$ and $V$, denoted $U \obar V$, 
is the array $W \in \mathcal{A}^{m \times (n+p)}$ defined by
\[
W_{i,j} = 
\begin{cases}
u_{i,j}, & 1 \leq j \leq n, \\
v_{i,\,j-n}, & n+1 \leq j \leq n+p,
\end{cases}
\qquad 1 \leq i \leq m.
\]
Similarly, if $U = (u_{i,j})\in \mathcal{A}^{m \times n}$ and $V = (v_{i,j})\in \mathcal{A}^{p \times n}$ are two arrays with the same number of columns, then the \emph{vertical concatenation} of $U$ and $V$, denoted $U\ominus V$
is the array $W \in \mathcal{A}^{(m+p) \times n}$ defined by
\[
W_{i,j} = 
\begin{cases}
u_{i,j}, & 1 \leq i \leq m, \\
v_{i-m,\,j}, & m+1 \leq i \leq m+p,
\end{cases}
\qquad 1 \leq j \leq n.
\]
\end{definition}

Since we are dealing with column-binomial coefficients, see \cref{def:main}, we have to consider horizontal concatenation to get a Pascal-type formula. We recall some results from \cite{GolafshanRigo}.
\begin{proposition}[Pascal formula {\cite[Prop.~1]{GolafshanRigo}}]\label{pro:pascal}
Let $M \in \mathcal{A}^{r\times m}$, $P\in \mathcal{A}^{s\times p}$, $C\in \mathcal{A}^{r\times 1}$ and $D\in \mathcal{A}^{s\times 1}$. 
\begin{equation}\label{eq:pascal}
  \binom{M\obar C}{P\obar D}=\binom{M}{P\obar D}+ \binom{C}{D} \binom{M}{P}.
\end{equation}
\end{proposition}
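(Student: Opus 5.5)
The plan is to split the defining sum of \cref{def:main} for $\binom{M\obar C}{P\obar D}$ according to where the last selected column lies. Write $M\obar C=(M_1\ \cdots\ M_m\ C)$, which has $m+1$ columns, and $P\obar D=(P_1\ \cdots\ P_p\ D)$, which has $p+1$ columns. By definition,
\[
\binom{M\obar C}{P\obar D}=\sum_{1\le j_1<\cdots<j_{p+1}\le m+1}\binom{(M\obar C)_{j_1}}{P_1}\cdots\binom{(M\obar C)_{j_p}}{P_p}\binom{(M\obar C)_{j_{p+1}}}{D}.
\]
We partition the index tuples into two families, according to whether $j_{p+1}\le m$ or $j_{p+1}=m+1$.

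In the first family every chosen column is a column of $M$. These terms therefore sum exactly to $\binom{M}{P\obar D}$.

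In the second family the last factor is $\binom{C}{D}$, which is independent of the other indices and can be factored out. The remaining indices range over $1\le j_1<\cdots<j_p\le m$, and by \cref{def:main} again they give $\binom{M}{P}$. This produces the term $\binom{C}{D}\binom{M}{P}$.

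The only care needed is with the degenerate cases, which we check against the stated conventions.
\begin{itemize}
\item If $s>r$, every column binomial coefficient vanishes, so both sides are $0$.
\item If $p=0$ (so $P$ is empty), then $\binom{M}{P}=1$ and $\binom{M}{D}=\sum_j\binom{M_j}{D}$, and the identity still holds.
\item If $m<p$, the first family is empty and $\binom{M}{P}$ may also vanish, consistent with the convention that such coefficients are $0$.
\end{itemize}
None of these is a real obstacle; the proof is the one-step splitting of the sum.
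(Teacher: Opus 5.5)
Your proof is correct. Splitting the defining sum of \cref{def:main} according to whether $j_{p+1}\le m$ or $j_{p+1}=m+1$ is the natural direct argument; the paper gives no proof and only recalls the identity from \cite{GolafshanRigo}, though within the paper it can also be read off from \cref{pro:vandermonde} with $N=C$, since a one-column array forces the second piece of the splitting to be either empty or the single column $D$. Your degenerate checks are fine but mostly unnecessary: empty index ranges give empty sums matching the zero conventions, so for $m<p$ both families are empty and $\binom{M}{P}=0$ necessarily, not just possibly, and only $p=0$ relies on the convention $\binom{M}{\varepsilon}=1$.
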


An alternative proof of the next proposition is given as a consequence of \cref{pro:magnus} on Magnus transform.
\begin{proposition}[Chu--Vandermonde identity {\cite[Prop.~2]{GolafshanRigo}}]\label{pro:vandermonde}
Let $M \in \mathcal{A}^{r\times m}, N\in \mathcal{A}^{r\times n}$ so that $M\obar N$ belongs to $\mathcal{A}^{r\times (m+n)}$
\[
  \binom{M\obar N}{P}=\sum_{P_1\obar P_2=P}\binom{M}{P_1}\binom{N}{P_2}.
\]
\end{proposition}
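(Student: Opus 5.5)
I would prove the Chu--Vandermonde identity directly from \cref{def:main}, by splitting each selection of columns of $M\obar N$ into the part that falls in $M$ and the part that falls in $N$. Write $P=(P_1\ \cdots\ P_p)$. A strictly increasing sequence $1\le j_1<\cdots<j_p\le m+n$ is determined by an index $t\in\{0,\ldots,p\}$, namely the number of $j_i$ with $j_i\le m$, together with two strictly increasing sequences: $j_1<\cdots<j_t\le m$ in the columns of $M$, and $1\le j_{t+1}-m<\cdots<j_p-m\le n$ in the columns of $N$. This correspondence is a bijection. The columns of $M\obar N$ with index $j\le m$ are the columns $M_j$, and those with $j>m$ are the columns $N_{j-m}$.

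Plugging this decomposition into the defining sum, the product $\prod_i\binom{(M\obar N)_{j_i}}{P_i}$ factors as a product over the first $t$ indices times a product over the remaining $p-t$ indices. Summing over all admissible pairs of subsequences for fixed $t$ therefore gives
\[
\binom{M}{P_1\cdots P_t}\binom{N}{P_{t+1}\cdots P_p},
\]
where $P_1\cdots P_t$ denotes the array formed by the first $t$ columns of $P$. For each $t$, the pair formed by the first $t$ columns and the last $p-t$ columns of $P$ is exactly one factorization $P=P'\obar P''$, and every factorization arises this way. Summing over $t$ then yields the right-hand side of the identity.

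Some bookkeeping is needed at the boundary cases. For $t=0$ or $t=p$, one factor is the empty array, and its coefficient is $1$ by convention. Terms with $t>m$ or $p-t>n$ contribute $0$ on both sides, because no increasing sequence exists and the convention for binomial coefficients of oversized arrays gives $0$. If $P$ has more rows than $r$, both sides vanish. None of these steps is a real obstacle; the only care needed is in checking that the bijection on index sequences respects the conventions for empty arrays.

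As an alternative, one could induct on $n$ using the Pascal formula (\cref{pro:pascal}), writing $N=N'\obar C$. However, the direct bijective argument above is cleaner, so I would use it.
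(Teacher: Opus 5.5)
Your argument is correct, but it is not the route the paper takes. The paper obtains the identity as a corollary of the Magnus transform. It uses $\mu(M\obar N)=\mu(M)\obar\mu(N)$ and, by \cref{pro:magnus}, $\mu(X)=\sum_{P}\binom{X}{P}P$ for every array $X$. Comparing the coefficient of $P$ on both sides, via the definition $\langle \mathsf{s}\obar\mathsf{t},U\rangle=\sum_{U_1\obar U_2=U}\langle\mathsf{s},U_1\rangle\langle\mathsf{t},U_2\rangle$ of the product of series, gives the formula at once.

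You instead work directly from \cref{def:main}, splitting each index sequence $j_1<\cdots<j_p$ at the frontier between the columns of $M$ and those of $N$. The combinatorial content is the same: the proof of \cref{pro:magnus} performs exactly such a column selection, one factor $\mu_\ominus(M_j)$ at a time. Only the packaging differs.

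The paper's version takes two lines once the formalism is in place. It shows Chu--Vandermonde to be nothing but the multiplicativity of $\mu$, which is the paper's reason for introducing the transform. Its bookkeeping is absorbed into two conventions: $\obar$-products of non-empty arrays of different heights vanish, and the empty array is neutral.

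Your version is self-contained and elementary, and it makes those boundary cases explicit. You only need three things: $\binom{X}{\varepsilon}=1$; the empty-sum convention for $t>m$ or $p-t>n$; and the fact that, with a single neutral empty array, the factorizations $P=P'\obar P''$ are exactly the $p+1$ splittings after $t$ columns, $0\le t\le p$.
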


\begin{proposition}[{\cite[Prop.~3]{GolafshanRigo}}]\label{pro:count_pos}
Let $M \in \mathcal{A}^{r\times m}$
$$\sum_{P\in \mathcal{A}^{s\times p}} \binom{M}{P}=\binom{a^{r\times m}}{a^{s\times p}}= \binom{m}{p} \cdot\binom{r}{s}^p.$$
\end{proposition}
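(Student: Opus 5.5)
The plan is to expand the left-hand side with \cref{def:main} and exchange the order of summation. Writing $M=(M_1\ \cdots\ M_m)$ with $M_j\in\mathcal{A}^r$, we have
\[
\sum_{P\in\mathcal{A}^{s\times p}}\binom{M}{P}
=\sum_{1\le j_1<\cdots<j_p\le m}\ \sum_{P_1,\ldots,P_p\in\mathcal{A}^s}\ \prod_{i=1}^p\binom{M_{j_i}}{P_i}.
\]
This uses the fact that an array $P\in\mathcal{A}^{s\times p}$ is the same thing as a $p$-tuple $(P_1,\ldots,P_p)$ of columns in $\mathcal{A}^s$. Since the columns $P_i$ range independently, the inner sum factors as a product:
\[
\prod_{i=1}^p\Bigl(\sum_{v\in\mathcal{A}^s}\binom{M_{j_i}}{v}\Bigr).
\]

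Next we need the one-dimensional identity $\sum_{v\in\mathcal{A}^s}\binom{u}{v}=\binom{|u|}{s}$ for every word $u$. It follows directly from \eqref{eq:bin}: each increasing choice of positions $1\le i_1<\cdots<i_s\le|u|$ spells exactly one word $v\in\mathcal{A}^s$, so summing over $v$ counts all such choices. It also equals $\binom{a^r}{a^s}$, the case of a unary word of length $r$. Since every column has length $r$, each factor equals $\binom{r}{s}$. The inner sum is therefore $\binom{r}{s}^p$, independently of the indices $j_1<\cdots<j_p$.

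The number of index choices $j_1<\cdots<j_p$ is $\binom{m}{p}$, which gives $\binom{m}{p}\binom{r}{s}^p$. For the middle expression, apply \cref{def:main} to $M=a^{r\times m}$ and $P=a^{s\times p}$: every summand is $\binom{a^r}{a^s}^p=\binom{r}{s}^p$, so the same value results. The degenerate cases ($m<p$ or $r<s$) are consistent with the convention that the coefficient is zero, since then $\binom{m}{p}=0$ or $\binom{r}{s}=0$.

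There is no real obstacle here. The only step needing care is the interchange of the sum over $P$ with the sum over column indices, together with the product factorization. This is justified because all sums are finite and the column entries of $P$ vary independently.
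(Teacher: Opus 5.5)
Your proof is correct and complete. Expanding with \cref{def:main}, exchanging the two finite sums, factoring the inner sum over the independently varying columns $P_1,\ldots,P_p$, and using $\sum_{v\in\mathcal{A}^s}\binom{u}{v}=\binom{|u|}{s}$ gives $\binom{m}{p}\binom{r}{s}^p$ for every $M$. The same computation on the unary arrays gives the middle term, and the degenerate cases ($m<p$ or $r<s$) are already covered by the definition itself.

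The paper does not reprove this statement; it is imported from \cite{GolafshanRigo}. So there is no in-paper argument to compare with, and your direct count is the natural one.

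If you want a version that uses the paper's own tools, apply to the Magnus transform the letter-to-letter coding $\pi$ that sends every letter to $a$. Since $\pi$ preserves shapes and commutes with $\ominus$ and $\obar$, you get $\pi(\mu(M))=\mu(a^{r\times m})$. Comparing coefficients of $a^{s\times p}$ via \cref{pro:magnus} then gives the first equality with no computation, which explains why the sum does not depend on $M$. The closed form $\binom{m}{p}\binom{r}{s}^p$ still comes from expanding $\obar_{j=1}^m\bigl(\ominus_{i=1}^r(1+a)\bigr)$, which is your count again.
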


For words, the following formula was first proposed in \cite{ManvelMSSS1991reconstruction}.

\begin{theorem}[Manvel-type formula {\cite[Thm.~2]{GolafshanRigo}}]\label{thm:manvel_2D}
Let $M\in \mathcal{A}^{r\times m}$, $P\in \mathcal{A}^{s\times p}$ and  $r\ge q\ge s$, $m\ge t\ge p$. We have 
\[
\sum_{T\in \mathcal{A}^{q\times t}} \binom{M}{T}\binom{T}{P}=\binom{m-p}{t-p} \binom{r}{q}^{t-p} \binom{r-s}{q-s}^p \binom{M}{P}.
\]
\end{theorem}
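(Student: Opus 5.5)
We prove the Manvel-type formula by double counting, interpreting both sides combinatorially. By \cref{def:main}, $\binom{M}{T}$ counts pairs consisting of a choice of $t$ columns $j_1<\cdots<j_t$ of $M$, together with, for each chosen column $M_{j_i}$, a choice of $q$ row positions (a $q$-subset of $\{1,\ldots,r\}$) such that the extracted subsequence equals the $i$th column $T_i$. Note that the row subsets may differ from column to column. Similarly, $\binom{T}{P}$ counts choices of $p$ columns of $T$ and, in each of them, an $s$-subset of positions reading the corresponding column of $P$. Hence the left-hand side $\sum_T \binom{M}{T}\binom{T}{P}$ counts all \emph{chains} formed as follows: a $t$-subset of columns of $M$, with a $q$-subset of rows in each of them; then a $p$-subset of these $t$ columns, with, inside each selected column, an $s$-subset of the $q$ chosen rows, such that the final extracted $s\times p$ array equals $P$. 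Summing over $T$ simply removes the constraint on $T$, since $T$ is determined by the first-level choices.

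We then reorganise the count by first fixing the composite occurrence of $P$ in $M$. Composing the choices gives $p$ columns $c_1<\cdots<c_p$ of $M$ and, in each column $c_i$, an $s$-subset of rows reading $P_i$. Such data is exactly one occurrence counted by $\binom{M}{P}$. So it suffices to show that each occurrence of $P$ extends to exactly $\binom{m-p}{t-p}\binom{r}{q}^{t-p}\binom{r-s}{q-s}^p$ chains, a number independent of $M$ and of the occurrence.

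To count the extensions of a fixed occurrence, we make three independent choices.
\begin{itemize}
\item The $t$ columns must contain $\{c_1,\ldots,c_p\}$, which leaves $\binom{m-p}{t-p}$ choices for the remaining $t-p$ columns. The positions of the $c_i$ among the $t$ chosen columns are then forced.
\item For each of the $t-p$ extra columns, the $q$-subset of rows is arbitrary, giving $\binom{r}{q}^{t-p}$ choices.
\item For each column $c_i$, the $q$-subset must contain the given $s$ rows, giving $\binom{r-s}{q-s}$ choices. The $s$-subset inside the $q$ rows is then forced.
\end{itemize}
Conversely, every chain arises in this way from a unique occurrence, so the map from chains to occurrences is well defined and has fibres of the stated size. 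Summing over the $\binom{M}{P}$ occurrences yields the formula.

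The main point requiring care is the bijection in this last step. We must check that the second-level choice, taken relative to the indices within $T$, corresponds uniquely to absolute column and row positions in $M$. This holds because subsets of subsets compose injectively: an index inside a chosen $q$-subset determines a unique absolute row, and conversely. Everything else is routine product counting; \cref{pro:count_pos} with $M=a^{r\times m}$ serves as a sanity check.
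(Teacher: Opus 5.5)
The paper does not prove this theorem; it quotes it from \cite[Thm.~2]{GolafshanRigo}, so there is no in-paper argument to compare with. Your double-counting proof is correct and complete. The key steps all hold:
\begin{itemize}
\item By \cref{def:main}, $\binom{M}{T}$ counts a $t$-set of columns together with a $q$-set of rows in each of them.
\item Summing over $T$ removes the constraint on $T$, since $T$ is read off from these first-level choices.
\item Composing the two levels of increasing injections yields an occurrence of $P$ in $M$.
\item The fibre over a fixed occurrence has exactly $\binom{m-p}{t-p}\binom{r}{q}^{t-p}\binom{r-s}{q-s}^{p}$ elements, as you count.
\end{itemize}

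For comparison, a natural algebraic route goes as follows. Expand both coefficients through \cref{def:main} and exchange the sum over $T$ with the sums over $j_1<\cdots<j_t$ and over the $p$-subset $K\subseteq\{1,\ldots,t\}$. Since $T$ ranges over all of $\mathcal{A}^{q\times t}$, the sum over $T$ factorizes over its columns:
\begin{itemize}
\item On the $p$ columns indexed by $K$, the one-dimensional Manvel identity $\sum_{x\in\mathcal{A}^{q}}\binom{u}{x}\binom{x}{v}=\binom{|u|-|v|}{q-|v|}\binom{u}{v}$ of \cite{ManvelMSSS1991reconstruction} produces $\binom{r-s}{q-s}^{p}$.
\item On the remaining $t-p$ columns, $\sum_{x\in\mathcal{A}^{q}}\binom{u}{x}=\binom{|u|}{q}$ produces $\binom{r}{q}^{t-p}$.
\end{itemize}
Finally, regroup the pairs (a $t$-set of columns, a $p$-subset of it) according to the composite $p$-set of columns of $M$. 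Each such set has $\binom{m-p}{t-p}$ supersets of size $t$, which yields $\binom{m-p}{t-p}\binom{M}{P}$.

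This is exactly your fibre count written with sums. Your version is self-contained: your third bullet reproves the word case on the fly. It also makes transparent why the multiplier is independent of $M$ and of the occurrence. The algebraic version is shorter once the one-dimensional identity is taken as known, and it mirrors how the two-dimensional coefficient is assembled from one-dimensional ones.
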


This corollary will be useful when dealing with binomial equivalence.

\begin{corollary}[Manvel-type formula {\cite[Cor.~1]{GolafshanRigo}}]\label{cor:manvel}
    Let $M,M'\in \mathcal{A}^{r\times m}$. Let $q,t,s,p$ be such that $r\ge q\ge s$ and $m\ge t\ge p$. If $\binom{M}{T}=\binom{M'}{T}$ for all $T\in \mathcal{A}^{q\times t}$, then $\binom{M}{P}=\binom{M'}{P}$ for all $P\in \mathcal{A}^{s\times p}$.
\end{corollary}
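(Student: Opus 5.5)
We derive the corollary directly from \cref{thm:manvel_2D}, using it to write each coefficient $\binom{M}{P}$ as a combination of the coefficients $\binom{M}{T}$ with $T\in\mathcal{A}^{q\times t}$, with weights that do not depend on $M$.

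Fix $P\in\mathcal{A}^{s\times p}$. Since $r\ge q\ge s$ and $m\ge t\ge p$, \cref{thm:manvel_2D} applies to both $M$ and $M'$, which have the same dimensions $r\times m$. It gives
\[
\sum_{T\in \mathcal{A}^{q\times t}} \binom{M}{T}\binom{T}{P}=c\,\binom{M}{P}
\quad\text{and}\quad
\sum_{T\in \mathcal{A}^{q\times t}} \binom{M'}{T}\binom{T}{P}=c\,\binom{M'}{P},
\]
where
\[
c=\binom{m-p}{t-p} \binom{r}{q}^{t-p} \binom{r-s}{q-s}^p .
\]
The constant $c$ depends only on $r,m,q,t,s,p$, so it is the same in both identities.

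By hypothesis, $\binom{M}{T}=\binom{M'}{T}$ for every $T\in\mathcal{A}^{q\times t}$. The two left-hand sides are therefore equal term by term, and hence $c\binom{M}{P}=c\binom{M'}{P}$.

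To cancel $c$, we check that it is nonzero. Each factor is a binomial coefficient $\binom{x}{y}$ with $0\le y\le x$, hence is a positive integer:
\begin{itemize}
\item $\binom{m-p}{t-p}$ is positive because $0\le t-p\le m-p$;
\item $\binom{r}{q}$ is positive because $0\le q\le r$;
\item $\binom{r-s}{q-s}$ is positive because $0\le q-s\le r-s$.
\end{itemize}
So $c>0$, and dividing by $c$ gives $\binom{M}{P}=\binom{M'}{P}$.

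The only point needing care is this nonvanishing of $c$, which is exactly where the inequalities $r\ge q\ge s$ and $m\ge t\ge p$ are used.
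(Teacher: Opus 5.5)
Your proof is correct and is essentially the intended argument. The paper does not reprove this corollary: it imports it from \cite{GolafshanRigo} as a direct consequence of \cref{thm:manvel_2D}, obtained by applying the formula to both $M$ and $M'$, equating the left-hand sides via the hypothesis, and cancelling the constant, which is positive exactly because $r\ge q\ge s$ and $m\ge t\ge p$.
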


\section{Magnus transformation}\label{sec:magnus}

Recall that the horizontal (resp. vertical) partial concatenation $\obar$ (resp. $\ominus$) is defined for arrays with the same number of rows (resp. columns). We will use the following conventions: if the operation is undefined, then the value of a forbidden product is set to zero. Also, an empty array acts as a neutral element for both partial concatenations. 

A {\em formal series} over a monoid $\mathcal{M}$ with coefficients in a semiring~$\mathbb{K}$ is simply a map $\mathsf{s}:\mathcal{M}\to\mathbb{K}$ and is often written as a sum $\sum_{m\in\mathcal{M}}\mathsf{s}(m)\, m$. The set of these series is denoted by $\mathbb{K}\langle\langle \mathcal{M}\rangle\rangle$. A {\em polynomial} is a series with finite support. The coefficient of $m$ in $\mathsf{s}$ is usually denoted by $\mathsf{s}(m)=\langle\mathsf{s},m\rangle$.

The transformation of a column $C=\begin{pmatrix}
  a_1& \cdots &a_m
\end{pmatrix}^\top$, where $a_i\in\mathcal{A}$ are letters, by $\mu_{\ominus}$ is defined, as in the classical case of one-dimensional words, by
\[
  \mu_{\ominus}(C)=(1+a_1)\ominus \cdots \ominus(1+a_m).
\]
It is a formal polynomial in $\mathbb{N}\langle\langle \mathcal{A}^*\rangle\rangle$ associating non-negative integers with column vectors of size at most $m$. As an example, apply $\mu_{\ominus}$ separately to the two columns of
\[
  \begin{pmatrix}
    a&b\\b&a\\a&a\\
  \end{pmatrix}.
\]
We obtain the two formal polynomials 
\[
  \mathsf{p}=(1+a)\ominus (1+b)\ominus (1+a)= 1+2
  \begin{pmatrix}
    a
  \end{pmatrix}+
  \begin{pmatrix}
    b
  \end{pmatrix}+
  \begin{pmatrix}
    a\\ b\\
  \end{pmatrix}+
  \begin{pmatrix}
    a\\ a\\
  \end{pmatrix}+
  \begin{pmatrix}
    b\\ a\\
  \end{pmatrix}+
  \begin{pmatrix}
    a\\b\\a\\
  \end{pmatrix}.
\]
and 
\[
   \mathsf{q}=(1+b)\ominus (1+a)\ominus (1+a)= 1+2
  \begin{pmatrix}
    a
  \end{pmatrix}+
  \begin{pmatrix}
    b
  \end{pmatrix}+
  \begin{pmatrix}
    a\\ a\\
  \end{pmatrix}+2
  \begin{pmatrix}
    b\\ a\\
  \end{pmatrix}+
  \begin{pmatrix}
    b\\a\\a\\
  \end{pmatrix}.
\]
Actually, we have the following result \cite[Prop.~6.3.6]{Lothaire1997}.
\begin{proposition}\label{pro:magnus1D}
The Magnus transformation of the column $C=\begin{pmatrix}
  a_1& \cdots &a_m
\end{pmatrix}^\top$, where $a_i\in\mathcal{A}$, is
\[
  \mu_{\ominus}(C)=\sum_{w\in\mathcal{A}^*} \binom{C}{w}\, w.
\]
\end{proposition}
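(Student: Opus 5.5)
We argue by induction on the length $m$ of the column $C$. Because $\ominus$ plays the role of ordinary word concatenation here, a column is simply a word $a_1\cdots a_m$, and $\mu_{\ominus}(C)$ is the product $(1+a_1)\cdots(1+a_m)$ computed in the algebra $\mathbb{N}\langle\langle\mathcal{A}^*\rangle\rangle$. This product is bilinear and associative, and the empty word $1$ is its neutral element.

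For $m=0$ the product is empty, so $\mu_{\ominus}(\varepsilon)=1$. This agrees with the claimed formula, since $\binom{\varepsilon}{w}$ equals $1$ for $w=\varepsilon$ and $0$ otherwise.

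For the inductive step, write $C=C'\ominus a_m$, where $C'$ has length $m-1$. By associativity, $\mu_{\ominus}(C)=\mu_{\ominus}(C')\ominus(1+a_m)$. The induction hypothesis and bilinearity then give
\[
\mu_{\ominus}(C)=\sum_{w}\binom{C'}{w}\,w+\sum_{w}\binom{C'}{w}\,w a_m .
\]
Now extract the coefficient of a fixed word $v$. If $v$ is empty, the coefficient is $1$. Otherwise write $v=v'b$ with $b\in\mathcal{A}$. The coefficient of $v$ is then $\binom{C'}{v}+\delta_{a_m,b}\binom{C'}{v'}$. This is exactly the one-dimensional Pascal rule $\binom{C'a_m}{v'b}=\binom{C'}{v'b}+\delta_{a_m,b}\binom{C'}{v'}$, which we can also read as the $1\times1$-column specialization of \cref{pro:pascal}. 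The rule follows directly from definition \eqref{eq:bin}: we split the occurrences of $v$ as a subsequence according to whether the last chosen index $i_\ell$ equals $m$.

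Alternatively, we can avoid induction altogether. Expand the product $(1+a_1)\cdots(1+a_m)$ by distributivity. Each choice of a subset $\{i_1<\cdots<i_\ell\}\subseteq\{1,\dots,m\}$ of factors contributing a letter produces the monomial $a_{i_1}\cdots a_{i_\ell}$. Grouping the monomials by the word they produce gives exactly the count in \eqref{eq:bin}.

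The only point needing care is justifying the Pascal rule from the definition, and this is routine. There is no real obstacle here.
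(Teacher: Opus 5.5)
Your proof is correct: the induction through the one-dimensional Pascal rule and the direct expansion of $(1+a_1)\ominus\cdots\ominus(1+a_m)$ by distributivity each establish the identity on their own. The paper gives no proof of its own here and simply cites \cite[Prop.~6.3.6]{Lothaire1997}; your second argument is the letter-level version of the expansion the paper uses for the two-dimensional analogue in \cref{pro:magnus}.
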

If we have two formal polynomials $\mathsf{s},\mathsf{t}\in\mathbb{N}\langle\langle \mathcal{A}^{**}\rangle\rangle$, we define $\mathsf{s}\obar \mathsf{t}$ as in \cite{Maurer} (where formal series on pictures are discussed) by 
\[
  \langle \mathsf{s}\obar\mathsf{t}, U\rangle = \sum_{U_1\obar U_2=U} \langle \mathsf{s}, U_1\rangle .\langle \mathsf{t}, U_2\rangle.
\]
This is where the convention about undefined product matters: If two non-empty arrays have different heights, their horizontal concatenation is the zero polynomial. The empty array acts as the identity. 
As an example, $\mathsf{p}\obar\mathsf{q}$ is equal to
\begin{align*}
   &1+4
  \begin{pmatrix}
    a
  \end{pmatrix}+2
  \begin{pmatrix}
    b
  \end{pmatrix}+
  \begin{pmatrix}
    a\\ b\\
  \end{pmatrix}+2
  \begin{pmatrix}
    a\\ a\\
  \end{pmatrix}+3
  \begin{pmatrix}
    b\\ a\\
  \end{pmatrix}+
  \begin{pmatrix}
    a\\b\\a\\
  \end{pmatrix}+
  \begin{pmatrix}
    b\\a\\a\\
  \end{pmatrix}\\
  &+4(aa)+2(ab)+2(ba)+(bb)+
  \begin{pmatrix}
    a&a\\b&a
  \end{pmatrix}+
            \begin{pmatrix}
              a&a\\a&a\\
            \end{pmatrix}+
  \begin{pmatrix}
    b&a\\a&a\\
  \end{pmatrix}\\
  &+ 2\begin{pmatrix}
    a&b\\b&a\\
  \end{pmatrix}+
  2\begin{pmatrix}
    a&b\\a&a\\
  \end{pmatrix}+
  2\begin{pmatrix}
    b&b\\a&a\\
  \end{pmatrix}+ \begin{pmatrix}
    a&b\\b&a\\a&a\\
  \end{pmatrix}.
\end{align*}
Note that the first line is essentially $\mathsf{p}+\mathsf{q}$. 

The following definition is also order dependent as is \cref{def:main}. 

\begin{definition}
  The two-dimensional \emph{Magnus transformation} of an array $M\in\mathcal{A}^{k\times \ell}$ is
  \[
    \mu(M):=\obar_{j=1}^\ell \left( \ominus_{i=1}^k (1+M[i,j])\right).
  \]
  It is a formal polynomial in $\mathbb{N}\langle\langle \mathcal{A}^{**}\rangle\rangle$.
\end{definition}
It is straightforward to extend this definition to higher dimension. As in \cref{rem:order}, the order in which the directional concatenations are applied matters.
\begin{lemma}
  The two-dimensional \emph{Magnus transformation} is a morphism, i.e., for all $M\in\mathcal{A}^{k\times \ell}$ and $N\in\mathcal{A}^{k\times r}$, $\mu(M\obar N)=\mu(M)\obar \mu(N)$.
\end{lemma}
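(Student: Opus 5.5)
The plan is to reduce the lemma to associativity of the product $\obar$ on formal polynomials in $\mathbb{N}\langle\langle \mathcal{A}^{**}\rangle\rangle$. By definition, $\mu(M)=\mathsf{c}_1\obar\cdots\obar\mathsf{c}_\ell$, where $\mathsf{c}_j=\ominus_{i=1}^k(1+M[i,j])$ is the column polynomial $\mu_\ominus(M_j)$. Likewise $\mu(N)=\mathsf{d}_1\obar\cdots\obar\mathsf{d}_r$ with $\mathsf{d}_j=\mu_\ominus(N_j)$. The columns of $M\obar N$ are $M_1,\dots,M_\ell,N_1,\dots,N_r$ in this order, so
\[
\mu(M\obar N)=\mathsf{c}_1\obar\cdots\obar\mathsf{c}_\ell\obar\mathsf{d}_1\obar\cdots\obar\mathsf{d}_r .
\]
The claim $\mu(M\obar N)=\mu(M)\obar\mu(N)$ is then exactly the statement that this iterated product may be bracketed as $(\mathsf{c}_1\obar\cdots\obar\mathsf{c}_\ell)\obar(\mathsf{d}_1\obar\cdots\obar\mathsf{d}_r)$. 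Associativity of $\obar$ on series is therefore all that is needed.

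To prove associativity, fix three polynomials $\mathsf{s},\mathsf{t},\mathsf{u}$ and an array $U$. Unfolding the definition of the product twice gives
\[
\langle(\mathsf{s}\obar\mathsf{t})\obar\mathsf{u},U\rangle=\sum_{U_1\obar U_2\obar U_3=U}\langle\mathsf{s},U_1\rangle\langle\mathsf{t},U_2\rangle\langle\mathsf{u},U_3\rangle ,
\]
and the same triple sum for $\mathsf{s}\obar(\mathsf{t}\obar\mathsf{u})$. This rests on the fact that arrays with partial horizontal concatenation, with the empty array as neutral element, behave like a monoid: $\obar$ is associative whenever it is defined. Decompositions of $U$ into three horizontal blocks are therefore in bijection with pairs of nested two-block decompositions, whichever side is grouped first. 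All sums are finite because we only deal with polynomials, so no convergence issue arises.

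The one point that needs care, and which I expect to be the only real obstacle, is the convention that the empty array is neutral and a forbidden product is zero. Consider a decomposition of a non-empty $U$ of height $h$ into blocks $U_1,U_2,U_3$. I will check that each non-empty block must have height $h$, and that the empty blocks can sit anywhere. The point is that a decomposition such as $U_1=\varepsilon$, $U_2\obar U_3=U$ is counted exactly once on each side. This holds because each decomposition is determined by the cut positions $0\le a\le b\le$ (number of columns of $U$), which is the same index set for both bracketings.

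I also have to check that no spurious terms arise when some intermediate $U_1\obar U_2$ has mismatched heights. Such terms are zero on both sides by convention, so they contribute nothing. With associativity established, induction on $r$ (or directly the generalized associativity above) finishes the proof.
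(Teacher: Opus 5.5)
Your argument is correct. You write $\mu(M\obar N)$ as the $\obar$-product of the column polynomials $\mu_\ominus(M_j)$ and $\mu_\ominus(N_j)$, then regroup using associativity of $\obar$ on polynomials, which you justify by indexing three-block decompositions with cut positions under the single-empty-array and zero-product conventions. The paper states the lemma without proof, as an immediate consequence of defining $\mu$ column by column, so your proposal supplies exactly the associativity check it leaves tacit.
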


We therefore obtain a Newton's formula. The classical Newton's formula develops $(1+x)^n$ and the coefficient of $x^j$ is the binomial coefficient $\binom{n}{j}$ counting the number of possible ways to choose a $j$-subset from an $n$-set. Here, we define subarray replacing the $j$-subsets from the classical setting.

\begin{definition}\label{def:subarray}
  Let $M \in \mathcal{A}^{r\times m}$ and $N\in \mathcal{A}^{s\times p}$ with $s\le r$ and $p\le m$. We say that $N$ is a {\em subarray} of $M$ if there exist $1\le j_1<\cdots <j_p\le m$ such that $N_i$ is a subword of $M_{j_i}$ for all $i\in\{1,\ldots,p\}$.
\end{definition}

One can therefore introduce a {\em subarray partial order} on $\mathcal{A}^{**}$.

\begin{proposition}\label{pro:magnus}
  The coefficients of the two-dimensional Magnus transform of an array $M$ are exactly the binomial coefficients of the form $\binom{M}{P}$, i.e.,  
  \[
    \mu(M)=\sum_{P\in \mathcal{A}^{**}} \binom{M}{P} P.
  \]
\end{proposition}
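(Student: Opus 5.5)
We prove the statement by expanding the horizontal product of the column transforms and collecting coefficients, reducing everything to the one-dimensional result \cref{pro:magnus1D}. Write $M=\begin{pmatrix}M_1&\cdots&M_m\end{pmatrix}$ with columns $M_j$ of height $r$. By definition, and since $\mu$ is a morphism for $\obar$,
\[
  \mu(M)=\mu_{\ominus}(M_1)\obar\cdots\obar\mu_{\ominus}(M_m).
\]
By \cref{pro:magnus1D}, each factor equals $\mu_{\ominus}(M_j)=\sum_{w\in\mathcal{A}^*}\binom{M_j}{w}\,w$. Here $w$ is read as a column; the empty word is the empty array with coefficient $1$.

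Next we expand the product using the definition of $\obar$ on series, which extends by associativity to $m$ factors:
\[
  \langle \mu(M),P\rangle=\sum_{U_1\obar\cdots\obar U_m=P}\ \prod_{j=1}^m \binom{M_j}{U_j},
\]
where each $U_j$ is either empty or a single non-empty column. Now fix $P\in\mathcal{A}^{s\times p}$ with columns $P_1,\dots,P_p$.

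\textbf{Case $P$ empty.} All $U_j$ must be empty, so the coefficient is $1=\binom{M}{\varepsilon}$.

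\textbf{Case $P$ non-empty, $s\ge1$.} Consider a decomposition $U_1\obar\cdots\obar U_m=P$. The empty array is neutral, and concatenating non-empty arrays of different heights is zero by convention. Hence the non-empty $U_j$'s must be exactly $P_1,\dots,P_p$, in order, placed at positions $j_1<\cdots<j_p$. This gives a bijection between decompositions with nonzero contribution and increasing $p$-tuples of column indices. For each such tuple, the empty factors contribute $\binom{M_j}{\varepsilon}=1$. The coefficient is therefore $\sum_{j_1<\cdots<j_p}\prod_i\binom{M_{j_i}}{P_i}$, which is exactly $\binom{M}{P}$ by \cref{def:main}. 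If $p>m$ or $s>r$, both sides vanish.

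The main point needing care is this bijection between decompositions and column selections, and in particular the treatment of the empty array and of height mismatches. We will justify it carefully from the stated conventions: one non-empty column per factor, the empty array acting as the identity, and undefined products set to zero. A secondary point is that the series product $\obar$ is associative. This follows from associativity of array concatenation and standard bilinearity, so the multiple-factor expansion above is legitimate.
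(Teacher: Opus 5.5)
Your proposal is correct and follows essentially the same route as the paper. Both arguments write $\mu(M)$ as the $\obar$-product of the column transforms $\mu_\ominus(M_j)$ and invoke \cref{pro:magnus1D} for each factor. The height-mismatch and empty-array conventions then force every nonzero term to come from increasing indices $j_1<\cdots<j_p$ carrying $P_1,\dots,P_p$, with the empty array (coefficient $1$) in every other factor. Your explicit handling of the empty pattern and of associativity of the series product is a bit more careful than the paper's write-up, but the argument is the same.
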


\begin{proof}
  Let $M=
  \begin{pmatrix}
    M_1&\cdots & M_\ell
  \end{pmatrix}$. By definition, we have
  \[
    \mu(M)=\obar_{j=1}^\ell \mu_\ominus(M_j).
  \]
  Since $\obar$-concatenation is defined only when all factors have the same number of rows (otherwise, the result is $0$) or are equal to an empty array, any non-zero term of $\mu(M)$ with a positive number~$t$ of rows is obtained by picking $k$ columns with $t$ rows in $\mu_\ominus(M_{i_1}),\ldots,\mu_\ominus(M_{i_k})$ and $1$ in the $\ell-k$ other terms. Let $P=\begin{pmatrix}
    P_1&\cdots & P_k
  \end{pmatrix}\in\mathcal{A}^{t\times k}$. We get
  \begin{align*}
    \langle \mu(M), P\rangle &= \sum_{1\le i_1<\cdots <i_k\le\ell} \langle\mu_\ominus(M_{i_1}), P_1\rangle \cdots \langle\mu_\ominus(M_{i_k}), P_k\rangle\\
    &=\sum_{1\le i_1<\cdots <i_k\le\ell} \binom{M_{i_1}}{P_1}\cdots \binom{M_{i_k}}{P_k}=\binom{M}{P}
    \end{align*}
  \qed
\end{proof}

As a Corollary, we get an alternative proof of Chu--Vandermonde identity.

\begin{proof}[of \cref{pro:vandermonde}]
Let $M \in \mathcal{A}^{k\times m}, N\in \mathcal{A}^{k\times n}$. We have
\begin{align*}
  \mu(M\obar N)&=\sum_{P\in \mathcal{A}^{**}} \binom{M \obar N}{P} P
\end{align*}
and since $\mu(M\obar N)=\mu(M)\obar\mu(N)$
\begin{align*}
  \mu(M\obar N)&= \left( \sum_{P_1\in \mathcal{A}^{**}} \binom{M}{P_1} P_1 \right) \obar \left( \sum_{P_2\in \mathcal{A}^{**}} \binom{N}{P_2} P_2 \right)\\
                 &= \sum_{P\in \mathcal{A}^{**}}  \sum_{P_1\obar P_2=P}\binom{M}{P_1}\binom{N}{P_2} P.
\end{align*}
  \qed
\end{proof}

Mimicking \cite{Manuch,Salomaa2003}, one can introduce the {\em $(k,\ell)$-spectrum} which is the formal polynomial encoding some binomial coefficients of $M$ for subarrays of specific size
\[
    \Sp_{k\times\ell}(M)=\sum_{P\in \mathcal{A}^{k\times \ell}} \binom{M}{P} P
  \]
  and the {\em full $(k,\ell)$-spectrum}
  which is the formal polynomial
\[
    \Sp_{\le k\times\ell}(M)=1+\sum_{\substack{1\le i\le k,\ 1\le j\le \ell\\ P\in \mathcal{A}^{i\times j}}} \binom{M}{P} P.
  \]
  
\section{Binomial complexity in a two-dimensional setting}\label{sec:binom}

We first recall the notions of $k$-binomial equivalence for finite words and $k$-binomial complexity for infinite words. These notions were first introduced in \cite{RigoSalimov-2015}. Then we extend those in a natural way.
\subsection{Recap in the one-dimensional case}
Let $k\ge 1$ be an integer. 
Two words $u, v\in \mathcal{A}^*$ are \emph{$k$-binomially equivalent},  and we write $u \sim_k v$,  if
\[
\binom{u}{w} = \binom{v}{w}
\]
for all $w\in \mathcal{A}^{\le k}$. The $k$-binomial equivalence class of $u$ is denoted by $[u]_k$. This equivalence relation was introduced in \cite{RigoSalimov-2015}. For a survey, see \cite{RSWCANT}. This is a congruence refining abelian equivalence:
\[
\forall u,v,x,y\in\mathcal{A}^*:\quad u\sim_k v, x\sim_k y\Rightarrow ux\sim_k vy.
\]

\begin{remark}
As a consequence of Manvel's formula \cite{ManvelMSSS1991reconstruction}, we may replace the condition on all $w\in \mathcal{A}^{\leq k}$ with the condition $\binom{u}{w} = \binom{v}{w}$ restricted to all $w\in \mathcal{A}^k$ as soon as $|u|$, $|v| \geq k$.  
\end{remark}

Let $\varphi:0\mapsto 01$, $1\mapsto 10$ be the Thue--Morse morphism. Ochsenschl\"ager showed \cite{Ochsenschlager-1981} that, for all $k\ge 1$,
\begin{equation}
  \label{eq:Oschenschlager}
  \varphi^k(0)\sim_k \varphi^k(1)\quad \text{ and }\quad  \varphi^k(0)\not\sim_{k+1} \varphi^k(1). 
\end{equation}

As an example, $\varphi^2(0)=0110\sim_2\varphi^2(1)=1001$ by direct computation
\[
  \binom{0110}{01}=2=\binom{1001}{01}
\]
and abelian equivalence alone does not imply $2$-binomial equivalence. However,  $\varphi^2(0)=0110\not\sim_3\varphi^2(1)=1001$ because
\[
  \binom{0110}{010}=2\neq 0=\binom{1001}{010}.
\]
If we exchange $0$'s and $1$'s, we see that $\binom{1001}{010}=\binom{0110}{101}$. The reader may already notice that bit-wise complement and the difference $\binom{0110}{010}-\binom{0110}{101}$ play here a special role to find non-equivalent words. See \cref{def:deltasigma} and \cref{eq:deltak1} for a formula about such differences.

\begin{definition}
We let $\Psi(u)$ denote the Parikh vector of the word~$u$. A morphism $f\colon \mathcal{A}^* \to \mathcal{B}^*$ is said to be \emph{Parikh-collinear}\index{Parikh-collinear} if, for all letters $a,b \in \mathcal{A}$, there exists $r_{a,b}\in\mathbb{Q}$ such that $\Psi(f(b)) = r_{a,b} \Psi(f(a))$. As a particular case,  $f\colon \mathcal{A}^* \to \mathcal{B}^*$ is said to be {\em Parikh-constant} if $\Psi(f(a)) = \Psi(f(b))$ for all $a,b \in \mathcal{A}$.
\end{definition}

The Thue--Morse morphism is a special case of Parikh-constant morphism, we recall some of its important properties. See \cite{RigoStipulantiWhiteland-2024,GRW}.

\begin{lemma}[{Transfer lemma \cite[Lem.~31]{LejeuneLeroyRigo-2020}}]\label{lem:transfer}
  Let $f$ be a Parikh-constant morphism and $u,v,w$ be words such that $|v|=|w|$. For all $k\ge 1$, we have $f^{k-1}(u)f^k(v)\sim_kf^k(w)f^{k-1}(u)$.
\end{lemma}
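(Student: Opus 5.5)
Since $\sim_k$ is a congruence, I would argue by induction on $k$, with the Ochsenschl\"ager-type fact underneath: for a Parikh-constant morphism $f$ and words $x,y$ of equal length, $f^{k}(x)\sim_k f^{k}(y)$. The $k$-binomial coefficients of $f(z)$ can be written in terms of the $(k-1)$-binomial coefficients of $z$ (a block decomposition), and Parikh-constancy kills the dependence on the letters of $z$ at the top level. I would prove the transfer statement $f^{k-1}(u)f^k(v)\sim_k f^k(w)f^{k-1}(u)$ in two steps.

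\emph{Step one.} Reduce to $w=v$ by showing $f^k(v)\sim_k f^k(w)$ when $|v|=|w|$. Using the congruence property, this is enough:
\[
f^k(w)f^{k-1}(u)\sim_k f^k(v)f^{k-1}(u).
\]

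\emph{Step two.} Show the commutation
\[
f^{k-1}(u)f^k(v)\sim_k f^k(v)f^{k-1}(u).
\]
Write $f^k(v)=f^{k-1}(f(v))$. By the Chu--Vandermonde identity for words,
\[
\binom{xy}{z}=\sum_{z=z_1z_2}\binom{x}{z_1}\binom{y}{z_2}.
\]
Hence $\binom{xy}{z}-\binom{yx}{z}$ only involves products $\binom{x}{z_1}\binom{y}{z_2}-\binom{y}{z_1}\binom{x}{z_2}$ with $|z_1|+|z_2|\le k$. The aim is to prove these differences vanish for $x=f^{k-1}(u)$ and $y=f^{k-1}(f(v))$.

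The structural input I would establish first is the following. For $|z|=j$, the coefficient $\binom{f^{k-1}(s)}{z}$ with $j\le k-1$ depends only on $|s|$. With $j=k$, it is a polynomial in $|s|$ plus a term linear in $\Psi(s)$. Both follow by induction on $k$ from the block decomposition, using Parikh-constancy at each level. This gives the first claim of Step one, because $\Psi(f(v))=\Psi(f(w))$ when $|v|=|w|$.

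In the cross terms, $|z_1|+|z_2|\le k$. Whenever both $z_1,z_2$ are nonempty, each has length at most $k-1$, so each factor depends only on lengths. The two products $\binom{x}{z_1}\binom{y}{z_2}$ and $\binom{y}{z_1}\binom{x}{z_2}$ are then both given by the same explicit length polynomials. For instance, if $\binom{f^{k-1}(s)}{z}=c_z\,|s|^{|z|}$-type, the products agree symmetrically. The precise symmetry requires the length dependence to be multiplicative in a suitable sense, and this is where I would have to check carefully. The terms with $z_1$ or $z_2$ empty contribute $\binom{x}{z}+\binom{y}{z}$ on both sides and cancel.

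The main obstacle is the exact form of the dependence of $\binom{f^{k-1}(s)}{z}$ on $s$ for $|z|<k$. If it is only a function of $|s|$ without a product structure, the cross-term cancellation is not automatic. I would instead try a cleaner route. Since $f^{k-1}(u)$ is a concatenation of blocks $f^{k-1}(a)$, a letter-by-letter induction reduces the claim to $f^{k-1}(a)f^k(v)\sim_k f^k(v)f^{k-1}(a)$. Writing $f^k(v)=f^{k-1}(f(v))$, it becomes
\[
f^{k-1}\bigl(a f(v)\bigr)\sim_k f^{k-1}\bigl(f(v) a\bigr).
\]
So it suffices to show $af(v)\sim_1 f(v)a$ lifts through $f^{k-1}$ to $\sim_k$. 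This is the lemma that $x\sim_j y$ implies $f(x)\sim_{j+1} f(y)$ for Parikh-constant $f$, started from the abelian equivalence $af(v)\sim_1 f(v)a$. That lemma, proved via the block decomposition of $\binom{f(x)}{z}$ as a sum over factorizations of $z$ weighted by subword counts of $x$ of length at most $j$, is the single technical core I would prove in full. It also yields Step one directly, starting from $v\sim_0 w$ (equal length).
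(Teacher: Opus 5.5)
Your final (``cleaner'') route is correct. The paper gives no proof of this lemma; it quotes it from \cite{LejeuneLeroyRigo-2020}. Your argument is the standard one, and it rests on the lifting property the paper recalls right after the statement (\cite[Prop.~3.9]{RigoStipulantiWhiteland-2024}): for Parikh-constant $f$ and all $j\ge 0$ (with $\sim_0$ meaning equal length), $x\sim_j y$ implies $f(x)\sim_{j+1}f(y)$. Iterating this from $v\sim_0 w$ gives $f^k(v)\sim_k f^k(w)$. Applying it $k-1$ times to $uf(v)\sim_1 f(v)u$ gives $f^{k-1}(u)f^k(v)=f^{k-1}(uf(v))\sim_k f^{k-1}(f(v)u)=f^k(v)f^{k-1}(u)$, and congruence finishes.

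Three small remarks.

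The letter-by-letter induction is unnecessary, since $uf(v)\sim_1 f(v)u$ already holds for the whole word $u$.

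When you write out the block decomposition of $\binom{f(x)}{z}$ with $|z|=j+1$, it is not only weighted by subword counts of $x$ of length at most $j$. The terms in which each of $j+1$ distinct blocks contributes one letter are weighted by $\binom{x}{b}$ with $|b|=j+1$. Parikh-constancy is exactly what collapses their sum to $\binom{|x|}{j+1}\prod_t |f(a)|_{z_t}$ (for any fixed letter $a$), which depends only on $|x|$. Make this step explicit, since it is the only place the hypothesis is used.

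The Chu--Vandermonde cross-term attempt can simply be dropped. In fact your own structural claim already settles it: for $|z|\le k$, the coefficient $\binom{f^{k-1}(s)}{z}$ depends only on $\Psi(s)$, and $s=uf(v)$ and $s=f(v)u$ have the same Parikh vector.
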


\begin{proposition}\label{pro:equivTM}
  Let $\varphi$ be the Thue--Morse morphism, $u,v\in\{0,1\}^*$ and $k\ge 1$, $u\sim_1 v\Leftrightarrow\varphi^k(u)\sim_{k+1}\varphi^k(v)$.
\end{proposition}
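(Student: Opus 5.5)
We prove the two implications separately, using the same tool for both: a formula for $\binom{\varphi(x)}{w}$ in terms of $\binom{x}{\cdot}$, which lets us move between $\varphi^k(u)$ and $\varphi^{k-1}(u)$ one step at a time.

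\emph{Necessity.} Suppose $\varphi^k(u)\sim_{k+1}\varphi^k(v)$. Then in particular $\varphi^k(u)\sim_1\varphi^k(v)$, so $|u|=|v|$ because $\varphi$ is Parikh-constant. Assume $|u|_0\neq|v|_0$. Compare the coefficients $\binom{\varphi^k(u)}{w}$ for $|w|=k+1$ chosen to detect the imbalance, e.g.\ $w=0(10)^{k/2}\cdots$ as in the example $\binom{0110}{010}\neq\binom{1001}{010}$. Roughly, Ochsenschl\"ager's statement \eqref{eq:Oschenschlager} says $\varphi^k(0)$ and $\varphi^k(1)$ differ at order $k+1$. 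The remaining task is to show that the total difference is a nonzero multiple of $|u|_0-|v|_0$. By the Chu--Vandermonde-type expansion of $\binom{xy}{w}$, every mixed term involves factors of order at most $k$, and these agree for $\varphi^k(0)$ and $\varphi^k(1)$. Hence
\[
\binom{\varphi^k(u)}{w}-\binom{\varphi^k(v)}{w}=(|u|_0-|v|_0)\left(\binom{\varphi^k(0)}{w}-\binom{\varphi^k(1)}{w}\right),
\]
which is nonzero for a suitable $w$, a contradiction.

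\emph{Sufficiency.} Assume $u\sim_1 v$. Argue by induction on $|u|$, using that $\sim_{k+1}$ is a congruence. Since $u$ and $v$ are abelian equivalent, $v$ is obtained from $u$ by a sequence of transpositions of adjacent letters $ab\to ba$. It therefore suffices to show
\[
\varphi^k(x01y)\sim_{k+1}\varphi^k(x10y),
\]
which by the congruence reduces to $\varphi^k(01)\sim_{k+1}\varphi^k(10)$. Now $\varphi^k(01)=\varphi^{k+1}(0)$ and $\varphi^k(10)=\varphi^{k+1}(1)$, so this is exactly Ochsenschl\"ager's result \eqref{eq:Oschenschlager} with $k+1$ in place of $k$. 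One could alternatively obtain it from \cref{lem:transfer}. This direction is therefore immediate.

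\emph{Main obstacle.} The delicate step is necessity, specifically making the displayed identity rigorous. Expanding $\binom{\varphi^k(u)}{w}$ over factorizations of $w$ across the blocks $\varphi^k(u_i)$, every term using at least two blocks involves only factors of length at most $k$, and these agree on $\varphi^k(0)$ and $\varphi^k(1)$. But the multiset of such products depends on the order of the blocks, not only on their number. To handle this, I would note that such terms depend only on the sequence of coefficients of order at most $k$, which is the same for every block. So these terms coincide for $u$ and $v$ of equal length, and only the single-block terms survive. Their contribution is $\sum_i\binom{\varphi^k(u_i)}{w}$, which gives the displayed formula. It then remains to exhibit $w$ of length $k+1$ with $\binom{\varphi^k(0)}{w}\neq\binom{\varphi^k(1)}{w}$, which is given by the second half of \eqref{eq:Oschenschlager}.
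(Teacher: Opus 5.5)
Your proof is correct. It differs from the paper, which gives no argument of its own: it cites \cite[Prop.~3.9]{RigoStipulantiWhiteland-2024} for $u\sim_1 v\Rightarrow\varphi^k(u)\sim_{k+1}\varphi^k(v)$, a result valid for every Parikh-collinear morphism, and \cite[Prop.~2.3]{GRW} for the converse.

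\emph{Sufficiency.} Your argument gives up that generality in exchange for a short trick specific to $\varphi$. Since $\varphi(0)=01$ and $\varphi(1)=10$ are the two orderings of the same letters, swapping adjacent letters in $u$ swaps a block $\varphi^{k+1}(0)$ for $\varphi^{k+1}(1)$ inside $\varphi^k(u)$. Then \eqref{eq:Oschenschlager} at level $k+1$ (or \cref{lem:transfer} with the first word empty), together with the congruence property, finishes the argument. This would fail for a general Parikh-collinear morphism $f$, where $f^k(ab)$ is not the image of a single letter. It is, however, fully elementary given Ochsenschl\"ager's theorem.

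\emph{Necessity.} Your argument uses the same block-cancellation mechanism the paper itself uses in the proof of \cref{lem:new5}. For $|w|=k+1$, every term spread over at least two $\varphi^k$-blocks only involves coefficients of words of length at most $k$. These coincide for $\varphi^k(0)$ and $\varphi^k(1)$, so those terms depend on $|u|$ alone. You then obtain the exact identity $\binom{\varphi^k(u)}{w}-\binom{\varphi^k(v)}{w}=(|u|_0-|v|_0)\bigl(\binom{\varphi^k(0)}{w}-\binom{\varphi^k(1)}{w}\bigr)$ for $|u|=|v|$, which is more informative than a bare non-equivalence statement.

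Two small points to tighten. First, the witness supplied by $\varphi^k(0)\not\sim_{k+1}\varphi^k(1)$ a priori has length at most $k+1$; you should say explicitly that the first half of \eqref{eq:Oschenschlager} forces length exactly $k+1$. Alternatively, take $w=\alpha_{k+1}$, for which the bracket equals $2^{\binom{k}{2}}$ by \eqref{eq:deltak1}. Second, the heuristic opening of your necessity paragraph (with $w=0(10)^{k/2}\cdots$) should be dropped in favour of the rigorous computation in your last paragraph.
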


The fact that $u\sim_1 v\Rightarrow\varphi^k(u)\sim_{k+1}\varphi^k(v)$ holds for any Parikh-collinear morphism as shown in \cite[Prop.~3.9]{RigoStipulantiWhiteland-2024}. The converse holds for the Thue--Morse morphism (generalized over $m$ letters), see \cite[Prop.~2.3]{GRW}.

\begin{definition}[Binomial complexity]
Let $k\ge 1$ be an integer. 
The \emph{$k$-binomial complexity function} of an infinite word $\infw{x}\in\mathcal{A}^\mathbb{N}$ is defined by the map $\bc{\infw{x}}{k} \colon \mathbb{N} \to \mathbb{N}$, $n\mapsto \#(\Fac_n(\infw{x})/{\sim_k})$.
\end{definition}

In particular, if $n\ge k$, then the $k$-binomial complexity counts some extended Parikh vectors
\[
  \Psi_k(u):= \left( \binom{u}{w_1},\ldots,\binom{u}{w_{\# \mathcal{A}^k}} \right).
\]

\begin{equation}\label{eq:parikh}
  \bc{\infw{x}}{k}(n) =\#\left\{ \Psi_k(u) \mid u\in \Fac_n(\infw{x})\right\}
\end{equation}
where $w_1<\cdots <w_{\# \mathcal{A}^k}$ range over the words of $\mathcal{A}^k$ genealogically ordered.

\begin{theorem}[{\cite[Cor.~3.6]{RigoStipulantiWhiteland-2024}}]
  Let $\infw{x}$ be a fixed point of a Parikh-collinear morphism.
For any $k \geq 1$ there exists a constant $C_{\infw{x},k} \in \N$ such that
$\bc{\infw{x}}{k}(n)\leq C_{\infw{x},k}$ for all $n \in \N$.
\end{theorem}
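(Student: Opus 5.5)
The plan is to reduce the statement to a finite-state argument. The key ingredient is a description of $k$-binomial classes of images under the morphism. Let $f$ be the Parikh-collinear morphism with fixed point $\infw{x}$, and fix $k\ge 1$. Every factor $u$ of $\infw{x}$ of length $n$ will be written, via the desubstitution of the fixed point, as
\[
  u = p_0 f(p_1)\cdots \quad\text{or, iterating,}\quad u = s_{k-1}\cdots s_1\, f^{k}(z)\, p_1\cdots p_{k-1},
\]
with $z$ a factor of $\infw{x}$ and each $s_i$ (resp. $p_i$) a proper suffix (resp. prefix) of some $f^{i}(a)$, $a\in\mathcal{A}$. More precisely, I will use $u=s\, f^{k-1}(z')\, p$ with $s,p$ of bounded length (bounded by $\max_a|f^{k-1}(a)|$), then apply the analogue of \cref{pro:equivTM} valid for Parikh-collinear morphisms, namely \cite[Prop.~3.9]{RigoStipulantiWhiteland-2024}: $z'\sim_1 z''\Rightarrow f^{k-1}(z')\sim_{k}f^{k-1}(z'')$.

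Since $\sim_k$ is a congruence, the class $[u]_k$ is then determined by three pieces of data: the pair $(s,p)$, of which there are finitely many; and the Parikh vector of $z'$. A priori the number of possible Parikh vectors grows with $n$, so the core step is to show that, once $n$ (hence $|s|+|p|$ and the total length) is fixed, only boundedly many Parikh vectors $\Psi(z')$ can occur.

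Parikh collinearity is what makes this work. All images $\Psi(f(a))$ are multiples of a single vector $\mathbf{v}$, so $\Psi(f^{k-1}(z'))=\lambda(z')\,\mathbf{v}'$ for a fixed vector $\mathbf{v}'$. The only quantity that matters is therefore the scalar $\lambda(z')$, and it is determined by $|f^{k-1}(z')|=n-|s|-|p|$. Hence $\Psi(f^{k-1}(z'))$ is determined by $(s,p,n)$. The remaining gap is that \cite[Prop.~3.9]{RigoStipulantiWhiteland-2024} requires $z'\sim_1 z''$ and not merely $\Psi(f^{k-1}(z'))=\Psi(f^{k-1}(z''))$. So I expect to need the stronger form: for Parikh-collinear $f$, $\Psi(f(z'))=\Psi(f(z''))$ already implies $f^{j}(z')\sim_{j} f^{j}(z'')$, obtained by an induction on $j$ using the Magnus/Newton-type expansion of $\binom{f(w)}{x}$ in terms of binomial coefficients of $w$. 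That proposition is proved in that form in \cite{RigoStipulantiWhiteland-2024}, so I would cite it.

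Granting this, $\bc{\infw{x}}{k}(n)\le \#\{(s,p)\}\le \bigl(\max_a|f^{k}(a)|\bigr)^{2}\cdot(\#\mathcal{A})^2$, which is a constant $C_{\infw{x},k}$ independent of $n$. Small $n$ (below the lengths of the $f^{k}(a)$) are handled trivially by the finite count of words. The main obstacle is the step just described: turning equal Parikh images into $\sim_k$-equivalence of the iterated images despite possibly different Parikh vectors of the preimages. If citing it is not acceptable, I would prove it by induction on $k$, expanding each coefficient $\binom{f(w)}{x}$ with $|x|\le k$ as a linear combination of coefficients $\binom{w}{y}$ with $|y|\le |x|$, and exploiting collinearity to kill the top-degree dependence.
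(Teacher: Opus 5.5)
The paper gives no proof of this statement; it is quoted from \cite[Cor.~3.6]{RigoStipulantiWhiteland-2024}. Your strategy (desubstitute, use that $\sim_k$ is a congruence, let Parikh-collinearity fix the middle block) is the right one. However, the central step fails as written because of an off-by-one in the exponent.

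With $u=s\,f^{k-1}(z')\,p$, the data $(s,p,n)$ fix only $\Psi(f^{k-1}(z'))$. Equal Parikh vectors of $f^{k-1}$-images do not give $k$-binomial equivalence. For Thue--Morse with $k=2$, $\varphi(0)=01$ and $\varphi(1)=10$ have the same Parikh vector, yet $\binom{01}{01}=1\neq 0=\binom{10}{01}$. The ``stronger form'' you invoke does not close this: with $j=k-1$ it only yields $f^{k-1}(z')\sim_{k-1}f^{k-1}(z'')$, one level short. So the step you single out as the core one, bounding the number of $\Psi(z')$ compatible with $(s,p,n)$, is never established. (For $k=1$ the identity $\Psi(f^{k-1}(z'))=\lambda(z')\mathbf{v}'$ is also false, since $f^0$ is the identity.)

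The repair is to desubstitute one more level, which your final bound with $\max_a|f^k(a)|$ suggests you intended. Since $\infw{x}=f^k(\infw{x})$, write $u=s\,f^k(z)\,p$ with $s$ (resp.\ $p$) a proper suffix (resp.\ prefix) of some $f^k(a)$. Suppose $\Psi(f(b))=r_b\mathbf{v}$ for all letters $b$. Then $\Psi(f^k(z))=\rho^{k-1}\lambda(z)\mathbf{v}$, with $\lambda(z)=\sum_b |z|_b r_b$ and $\rho=\sum_b r_b v_b$. Here $\rho\neq 0$, since otherwise $f^2$ would erase everything and $\infw{x}$ could not be infinite. Hence $n-|s|-|p|=|f^k(z)|$ determines $\lambda(z)$, i.e.\ $\Psi(f(z))$.

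So for two factors $s\,f^k(z)\,p$ and $s\,f^k(z'')\,p$ of the same length, $f(z)\sim_1 f(z'')$. Applying \cite[Prop.~3.9]{RigoStipulantiWhiteland-2024} to the words $f(z),f(z'')$ with exponent $k-1$ gives
$f^k(z)=f^{k-1}(f(z))\sim_k f^{k-1}(f(z''))=f^k(z'')$. By congruence, $[u]_k$ then depends only on $(s,p)$ once $n$ is fixed, and your count $(\#\mathcal{A}\cdot\max_a|f^k(a)|)^2$ goes through. Your ``stronger form'' with $j=k$ is exactly Prop.~3.9 applied to $f(z)$, so no separate Magnus-type induction is needed.
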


\begin{theorem}[{\cite[Thm.~6]{LejeuneLeroyRigo-2020}}]\label{thm:lejeune}
  Let $k\ge 1$. For $n\ge 2^k$, the $k$-binomial complexity of the Thue--Morse word~$\infw{t}$ is given by 
  \[
    \bc{\infw{t}}{k}(n) =\left\{
      \begin{array}{ll}
        3.2^k-3, \text{ if }n\equiv 0\pmod{2^k};\\
        3.2^k-4, \text{ otherwise}.
      \end{array}\right.
  \]
  For $n<2^k$, $k$-binomial complexity and factor complexity coincide.
\end{theorem}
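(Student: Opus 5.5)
The plan is to factorize every long factor of $\infw{t}$ along the $\varphi^k$-blocks and to show that, up to $\sim_k$, only the two incomplete ends of the factor matter. Write $n=q2^k+r$ with $0\le r<2^k$. Since $\infw{t}=\varphi^k(\infw{t})$, every factor $w$ of length $n\ge 2^k$ can be written as $w=p\,\varphi^k(u)\,s$, where $p$ is a proper suffix of some $\varphi^k(a)$, $s$ is a proper prefix of some $\varphi^k(b)$, and $|p|+|s|\in\{r,r+2^k\}$. By \eqref{eq:Oschenschlager} and the fact that $\sim_k$ is a congruence, $\varphi^k(u)\sim_k\varphi^k(v)$ whenever $|u|=|v|$. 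Hence the class $[w]_k$ depends only on the pair $(p,s)$ and on $|u|$, and $|u|$ is fixed by $n$, $|p|$ and $|s|$. This already gives a uniform bound, and it shows that $\bc{\infw{t}}{k}(n)$ depends only on $r$ once $q\ge 1$ or $2$. For $n<2^k$ I would argue separately that distinct factors are not $k$-binomially equivalent, by induction on $k$ through \cref{pro:equivTM}.

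Next I would reduce the set of pairs. The transfer lemma (\cref{lem:transfer}) lets me move a block $\varphi^{k-1}(x)$ across full $\varphi^k$-blocks. Since $\varphi^k(a)=\varphi^{k-1}(a)\varphi^{k-1}(\bar a)$, a suffix $p$ of length at least $2^{k-1}$ has the form $p'\varphi^{k-1}(\bar a)$, so part of $p$ can be transported to the right end. I would therefore normalize each factor to a canonical representative: a pair $(p,s)$ taken from a restricted list determined by the binary digits of $|p|$ and $|s|$. I would then count the admissible canonical pairs that actually occur in $\infw{t}$. This requires checking which $\varphi^k(a)\varphi^k(u)\varphi^k(b)$ are factors, which amounts to knowing the short factors of $\infw{t}$. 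The target count is $3\cdot2^k-4$ when $r\neq0$, and $3\cdot 2^k-3$ when $r=0$, where the extra class is the fully aligned one ($p=s=\varepsilon$).

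The hard step is the lower bound: canonical pairs that are distinct must give classes that are distinct under $\sim_k$. I would proceed by induction on $k$, in the spirit of \cref{pro:equivTM}. Applying $\varphi$ to a factorization of level $k-1$ yields one of level $k$, and \cref{pro:equivTM} shows that $\varphi$ turns $\sim_1$-inequivalence into $\sim_{k}$-inequivalence, so distinctions are lifted along the blocks. To separate the new classes created at level $k$ I would use explicit coefficients, namely differences such as $\binom{w}{0^{j}1}-\binom{w}{1^{j}0}$, or coefficients in the spirit of $\binom{0110}{010}-\binom{0110}{101}$, which detect the position of the cut inside a $\varphi^k$-block. I expect the bookkeeping here, tracking how such coefficients change when one letter is added or removed at either end, to be the main obstacle, together with the exact accounting of the special residue $r=0$.
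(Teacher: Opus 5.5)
The paper gives no proof of this statement; it quotes it from \cite{LejeuneLeroyRigo-2020}. The proof there has the same architecture as your plan: write $w=p\,\varphi^k(z)\,s$, collapse the middle with Ochsenschl\"ager's result, merge pairs $(p,s)$ with the transfer lemma, and count. Your upper-bound half is sound and can be completed exactly. There are $4\cdot 2^k-4$ admissible pairs when $r\neq 0$ and $4\cdot 2^k-3$ when $r=0$, and all of them occur once $n\ge 2^k$. \cref{lem:transfer} yields two moves, $(x\varphi^{k-1}(c),y)\leftrightarrow(x,\varphi^{k-1}(c)y)$ and $(x\varphi^{k-1}(c),\varphi^{k-1}(\overline{c})y)\leftrightarrow(x,y)$, with $|x|,|y|<2^{k-1}$ and all components proper suffixes or prefixes of $\varphi^k$-blocks. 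These moves form a forest with exactly $2^k$ edges, so $\bc{\infw{t}}{k}(n)\le 3\cdot 2^k-4$, resp.\ $\le 3\cdot 2^k-3$. In your text, however, these numbers appear only as a target.

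The genuine gap is the lower bound, which is the actual content of the theorem. You must show that pairs in different orbits of these moves never give $k$-binomially equivalent words, for every $n\ge 2^k$. You must also show that distinct factors of length $n<2^k$ are never $k$-binomially equivalent. The proposal contains no argument for either, and the tool you invoke cannot supply one.

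\cref{pro:equivTM} describes $\sim_k$ only among $\varphi^{k-1}$-images, where it reduces to abelian equivalence of the preimages. Among the words $p\,\varphi^k(z)\,s$ this covers essentially only the pairs with $|p|,|s|\in\{0,2^{k-1}\}$, and for $n<2^k$ it says essentially nothing. It cannot carry a $\sim_{k-1}$-distinction from level $k-1$ to a $\sim_k$-distinction at level $k$. The induction you sketch would need two further ingredients:
\begin{enumerate}
\item a full cancellation property $\varphi(u)\sim_{j+1}\varphi(v)\Rightarrow u\sim_j v$ for all $j$, which does not follow from \cref{pro:equivTM};
\item a treatment of the boundary letters of unaligned factors $c\,\varphi(u)\,d$, since most factors are not $\varphi$-images.
\end{enumerate}
The alternative, an explicit family of coefficients separating all $3\cdot 2^k-4$ (resp.\ $3\cdot 2^k-3$) candidate classes uniformly in $n$, is only gestured at.

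Your remark that the count depends only on $r$ is also premature. For a fixed pair, $\binom{p\varphi^k(z)s}{w}$ is a polynomial in $q$, so two pairs could a priori become equivalent for special values of $q$. The eventual periodicity is an output of the lower bound, not of the factorization. Showing that the transfer moves are the only source of identifications is where essentially all the work in \cite{LejeuneLeroyRigo-2020} lies. As written, your proposal establishes at most the upper bound.
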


\subsection{Extension to a multidimensional setting}

We extend the definition to a two-dimensional setting in a straightforward manner.
\begin{definition}[Binomial equivalence]
Let $k,\ell\ge 1$ be integers. Two finite arrays $U,V$ of the same dimension are \emph{$(k,\ell)$-binomially equivalent}, and we write $U\sim_{k,\ell}V$, if
\[
  \binom{U}{W}=\binom{V}{W}
\]
for all arrays $W$ of dimension $k'\times\ell'$, for all $k'\le k$ and $\ell'\le \ell$. In terms of full spectrum, this equivalence can be expressed as 
\[
  \Sp_{\le k\times\ell}(U)=\Sp_{\le k\times\ell}(V).
\]
\end{definition}

\begin{remark}
  It is obvious that $U\sim_{k,\ell} V$ implies $U\sim_{k',\ell'} V$ for all $k'\le k$ and $\ell'\le\ell$. In particular, $\sim_{1,1}$ is the abelian equivalence.
\end{remark}

Assume that $U$ and $V$ are large enough: they both contain a subarray of dimension $k\times\ell$. As a consequence of \cref{cor:manvel}, we have the following.

\begin{proposition}\label{pro:limitkl}
  Let $U,V\in\mathcal{A}^{r\times m}$, $k\le r$ and $\ell\le m$. We have $U\sim_{k,\ell}V$ whenever $\binom{U}{W}=\binom{V}{W}$ holds for all arrays $W$ of dimension $k\times \ell$, i.e.,
  \[
    \Sp_{k\times\ell}(U)=\Sp_{k\times\ell}(V)\Rightarrow U\sim_{k,\ell}V.
  \]
\end{proposition}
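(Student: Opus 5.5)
The plan is to reduce directly to \cref{cor:manvel}. Fix $U,V\in\mathcal{A}^{r\times m}$ with $k\le r$ and $\ell\le m$, and assume $\Sp_{k\times\ell}(U)=\Sp_{k\times\ell}(V)$, that is, $\binom{U}{T}=\binom{V}{T}$ for every $T\in\mathcal{A}^{k\times\ell}$. We must show $\binom{U}{W}=\binom{V}{W}$ for every $W\in\mathcal{A}^{k'\times\ell'}$ with $k'\le k$ and $\ell'\le\ell$.

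\emph{Nonempty case.} Suppose $1\le k'\le k$ and $1\le\ell'\le\ell$. Apply \cref{cor:manvel} to $M=U$ and $M'=V$ with the parameters
\[
q=k,\qquad t=\ell,\qquad s=k',\qquad p=\ell'.
\]
The required inequalities $r\ge q\ge s$ and $m\ge t\ge p$ are exactly $r\ge k\ge k'$ and $m\ge\ell\ge\ell'$. These hold by hypothesis, which is why $k\le r$ and $\ell\le m$ are assumed. The hypothesis of the corollary is precisely equality of the $(k,\ell)$-spectra. It therefore yields $\binom{U}{P}=\binom{V}{P}$ for all $P\in\mathcal{A}^{k'\times\ell'}$.

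\emph{Degenerate case.} If $k'=0$ or $\ell'=0$, then $W$ is the empty array. By convention $\binom{U}{\varepsilon}=1=\binom{V}{\varepsilon}$, so the identity holds trivially. This also matches the constant term $1$ in $\Sp_{\le k\times\ell}$.

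Combining both cases gives $\Sp_{\le k\times\ell}(U)=\Sp_{\le k\times\ell}(V)$, i.e., $U\sim_{k,\ell}V$.

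Nothing substantial remains once \cref{thm:manvel_2D} is granted. The only point needing care is checking that the parameter ranges in \cref{cor:manvel} allow $s=q$ and $p=t$, and that the size hypotheses $k\le r$ and $\ell\le m$ are exactly what the corollary needs. If one worried about the corollary itself, it follows from \cref{thm:manvel_2D}. For $U$, the right-hand side is the nonzero constant
\[
\binom{m-p}{t-p}\binom{r}{q}^{t-p}\binom{r-s}{q-s}^{p}
\]
times $\binom{U}{P}$, while the left-hand side depends only on the values $\binom{U}{T}$ for $T\in\mathcal{A}^{q\times t}$. The same holds for $V$ with the same constant, so equality of the $q\times t$ coefficients forces $\binom{U}{P}=\binom{V}{P}$ after dividing by that constant.
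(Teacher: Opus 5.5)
Your proposal is correct and is the paper's own argument: the paper obtains this proposition directly as a consequence of \cref{cor:manvel}, with exactly your choice $q=k$, $t=\ell$, $s=k'$, $p=\ell'$. Your additional remarks (the empty-array convention, and why the constant in \cref{thm:manvel_2D} is nonzero so that the corollary follows) are accurate, though the paper does not spell them out.
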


Let $\infw{M}\in \mathcal{A}^{\mathbb{N}\times\mathbb{N}}$. For $m,n\ge 1$, we define the set of $(m,n)$-factors of $\infw{M}$ by
  \[\Fac_{m\times n}(\infw{M}):=\{ \infw{M}[i\cdots i+m-1,j\cdots j+n-1]\mid i,j\in \mathbb{N}\} .
  \]

\begin{definition}[Binomial complexity]
Let $k,\ell\ge 1$ be integers. 
The \emph{$(k,\ell)$-binomial complexity function} of an infinite array $\infw{M}$ is the map $\bc{\infw{M}}{k,\ell} \colon \mathbb{N}\setminus\{0\}\times\mathbb{N}\setminus\{0\}\ \to \mathbb{N}$, $(m,n)\mapsto \#(\Fac_{m\times n}(\infw{M})/{\sim_{k,\ell}})$.
\end{definition}


As an example, here are the eight $2\times 2$ factors occurring in the two-dimensional Thue--Morse word $\infw{T}$:
\[
  \left(
\begin{array}{cc}
 0 & 0 \\
 0 & 0 \\
\end{array}
\right),\left(
\begin{array}{cc}
 0 & 0 \\
 1 & 1 \\
\end{array}
\right),\left(
\begin{array}{cc}
 0 & 1 \\
 0 & 1 \\
\end{array}
\right),\left(
\begin{array}{cc}
 0 & 1 \\
 1 & 0 \\
\end{array}
\right),\left(
\begin{array}{cc}
 1 & 0 \\
 0 & 1 \\
\end{array}
\right),\left(
\begin{array}{cc}
 1 & 0 \\
 1 & 0 \\
\end{array}
\right),\left(
\begin{array}{cc}
 1 & 1 \\
 0 & 0 \\
\end{array}
\right),\left(
\begin{array}{cc}
 1 & 1 \\
 1 & 1 \\
\end{array}
\right)
\]

For instance, the following two $4\times 4$ factors of $\infw{T}$ satisfy
\[
  \left(
\begin{array}{cccc}
 1 & 1 & 0 & 1 \\
 0 & 0 & 1 & 0 \\
 0 & 0 & 1 & 0 \\
 1 & 1 & 0 & 1 \\
\end{array}
\right)\sim_{2,2}
\left(
\begin{array}{cccc}
 0 & 1 & 0 & 0 \\
 1 & 0 & 1 & 1 \\
 1 & 0 & 1 & 1 \\
 0 & 1 & 0 & 0 \\
\end{array}
\right)
\]
either compute the relevant binomial coefficients or use \cref{lem:colum-k-equiv} because $0110\sim_2 1001$.

\section{A general bound on the $(k,\ell)$-binomial complexity}\label{sec:bound}

\begin{definition}
  Let $u\in\mathcal{A}^*$ and $v\in\mathcal{B}^*$ (resp. $\infw{x}\in\mathcal{A}^\mathbb{N}$, $\infw{y}\in\mathcal{B}^\mathbb{N}$) be words. The {\em direct product} $u\times v$ (resp. $\infw{x}\times\infw{y}$) of these two words is an array in $(\mathcal{A}\times\mathcal{B})^{**}$ (resp. $(\mathcal{A}\times\mathcal{B})^{\mathbb{N}\times\mathbb{N}}$) whose element $i,j$ is $(u_i,v_j)$ (resp. $(\infw{x}_i,\infw{y}_j)$).
  \[
    \begin{array}{c|cccccc}
      \times &b_1&b_2&\cdots &b_n\\
      \hline
      a_1 & (a_1,b_1) & (a_1,b_2) & \cdots &(a_1,b_n) \\
      \vdots& \vdots & & & \vdots \\
            a_m& (a_m,b_1) & (a_m,b_2) & \cdots & (a_m,b_n)  \\
    \end{array}
  \]
  
\end{definition}

We consider the following setting. Let $\infw{s}=a_0a_1\cdots$ and $\infw{y}=b_0b_1\cdots$ be two infinite words over the alphabets $\mathcal{A}$ and $\mathcal{B}$ respectively. First, build the direct product $\infw{s}\times\infw{y}\in(\mathcal{A}\times\mathcal{B})^{\mathbb{N}\times\mathbb{N}}$ of these two words. Let $f:\mathcal{A}\times \mathcal{B}\to \mathcal{C}$ be a map. For each $b\in \mathcal{B}$, we let $f_b:\mathcal{A}\to \mathcal{C}$ be the map
\[
  f_b(a)=f(a,b).
\]
This coding is extended to a morphism $f_b:\mathcal{A}^*\to \mathcal{C}^*$, i.e., for a word $a_1\cdots a_n$ where $a_i$ is a letter, we have $f_b(a_1\cdots a_n)=f_b(a_1)\cdots f_b(a_n)$. 
We consider the infinite array $\mathbf{M}$ built from $(\mathbf{s},\mathbf{y},f)$ by
\[
  \forall i,j\ge 0,\quad \mathbf{M}[i,j]=f(a_i,b_j)=f_{b_j}(a_i).
\]

\begin{example}
  A simple way to build a two-dimensional morphism (or substitution) is to take the direct product of two unidimensional (prolongable) morphisms $\mu:\mathcal{A}^*\to\mathcal{A}^*$ and $\nu:\mathcal{B}^*\to\mathcal{B}^*$. The resulting morphism is defined on the alphabet $\mathcal{A}\times\mathcal{B}$ and $(a,b)\mapsto \mu(a)\times \nu(b)$, i.e., the image of the pair $(a,b)$ is a rectangular array of size $|\mu(a)|\times |\nu(b)|$ whose element $(i,j)$ is $([\mu(a)]_i,[\nu(b)]_j)$. See, for instance, \cite{Priebe}. Then one may apply an extra coding $f:\mathcal{A}\times\mathcal{B}\to\mathcal{C}$ with the required properties.

For instance, with $\mathbf{s}=\mathbf{y}=01101001\cdots$ being two copies of the Thue--Morse word over $\{0,1\}$, we have depicted on the left of \cref{fig:tm2} the corresponding direct product $\mathbf{s}\times\mathbf{y}$ over a $4$-letter (colored) alphabet. On the right, we have obtained the two-dimensional Thue--Morse array~$\infw{T}$ obtained by coding $(0,0)$ and $(1,1)$ as $0$ (in white) and $(0,1)$ and $(1,0)$ as $1$ (in green). So, in our setting, the coding $f$ is addition of the components modulo~$2$.
  \begin{figure}[h!t]
    \centering
    \includegraphics[width=10cm]{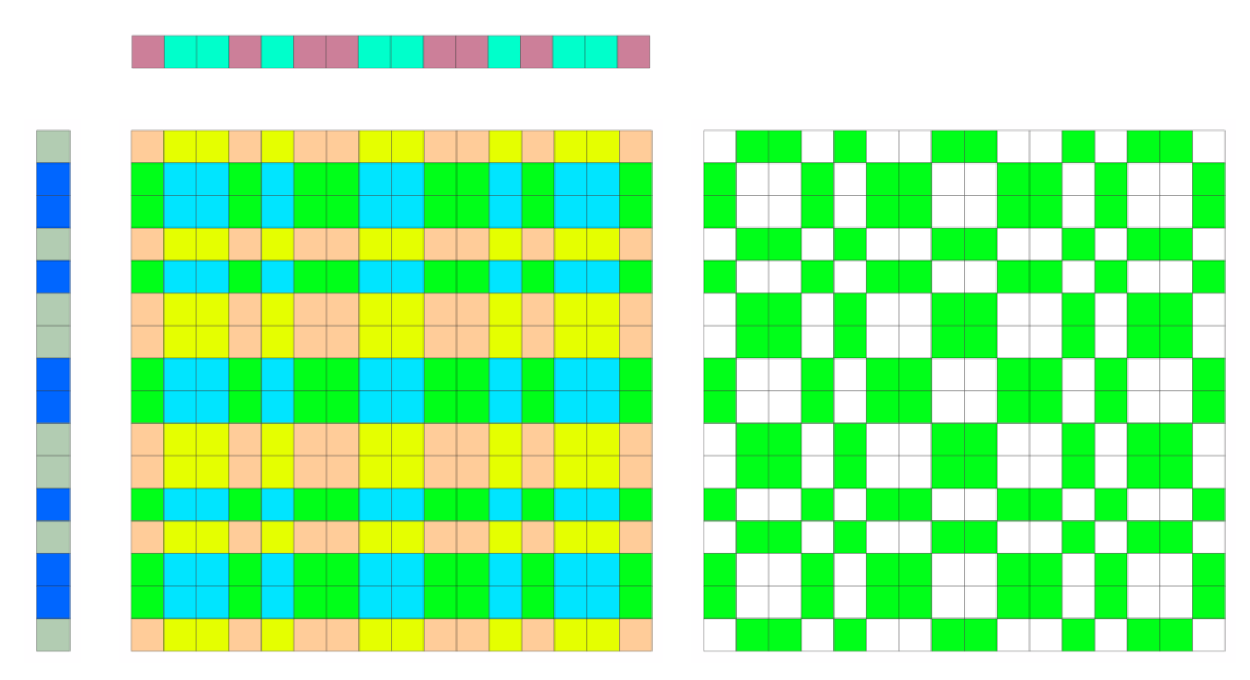}
    \caption{Direct product $\infw{t}\times\infw{t}$ of Thue--Morse words and its coding~$\infw{T}$.}
    \label{fig:tm2}
  \end{figure}
The array~$\infw{T}$ is the fixed point of Thue--Morse morphism~$\Phi$ defined by 
\begin{equation}
  \label{eq:Psi}
  \Phi:0\mapsto
  \begin{array}{|c|c|}
      \hline
    0&1\\
       \hline
    1&0\\
       \hline
  \end{array}\ ,\quad
  1\mapsto
  \begin{array}{|c|c|}
  \hline
    1&0\\
       \hline
    0&1\\
       \hline
  \end{array}\ .
\end{equation}

In \cref{fig:fibtm}, we have considered the direct product of the Fibonacci word~$\infw{f}$ with itself and then, with the Thue--Morse word.
\begin{figure}[h!t]
  \centering
  \includegraphics[width=10cm]{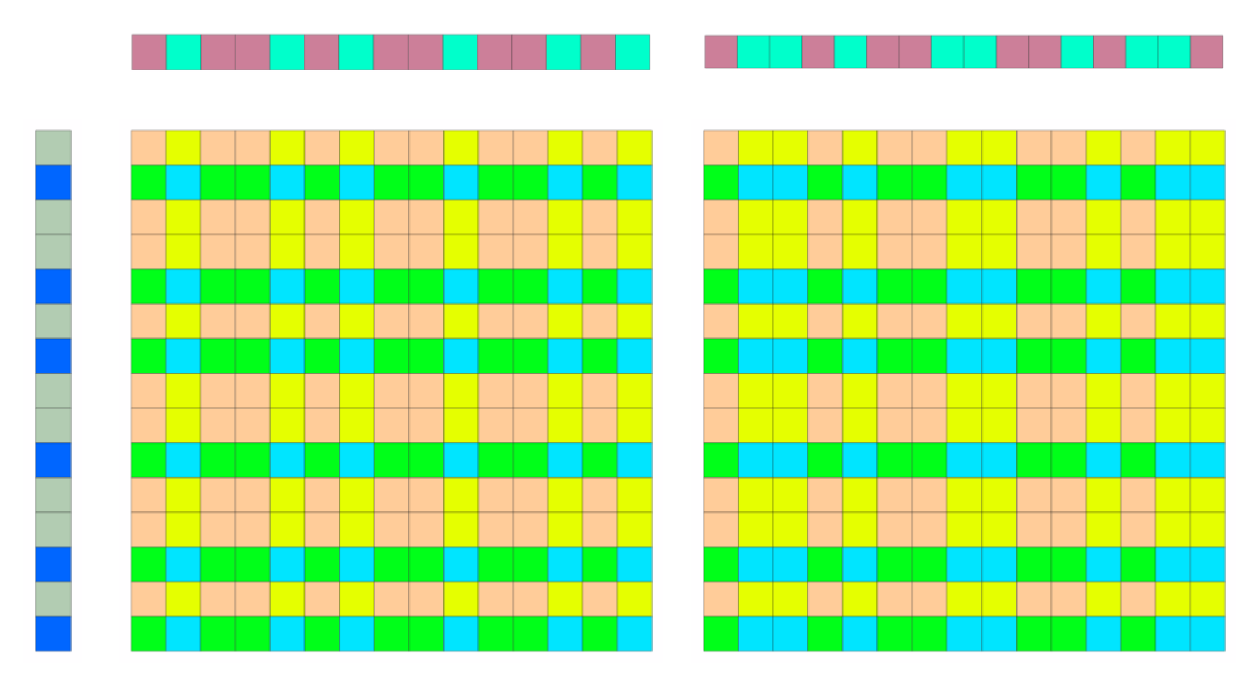}
  \caption{Direct products $\infw{f}\times\infw{f}$ and $\infw{f}\times\infw{t}$ of Fibonacci and Thue--Morse words.}
  \label{fig:fibtm}
\end{figure}
The array $\infw{f}\times\infw{f}$ is fixed by the morphism
\[
  0\mapsto
  \begin{array}{|c|c|}
      \hline
    0&1\\
       \hline
    2&3\\
       \hline
  \end{array}\ ,\quad 
  1\mapsto
  \begin{array}{|c|}
      \hline
    0\\
       \hline
    2\\
       \hline
  \end{array}\ ,\quad
   2\mapsto
  \begin{array}{|c|c|}
      \hline
    0&1\\
       \hline
  \end{array}\ ,\quad 
   3\mapsto
  \begin{array}{|c|c|}
      \hline
    0\\
       \hline
  \end{array}
\]
where $0,1,2,3$ stand resp. for $(0,0)$, $(0,1)$, $(1,0)$ and $(1,1)$. 
This is an example of a shape-symmetric morphism, see \cite{CKR2010}. The $(k,\ell)$-binomial complexity of $\infw{f}\times\infw{f}$ is given in \cref{cor:sturmian}.
\end{example}

\begin{theorem}\label{thm:bounded_complexity}
 Let $k,\ell\ge 1$ be integers. The $(k,\ell)$-binomial complexity of the infinite array $\mathbf{M}$ built from $(\mathbf{s},\mathbf{y},f)$ by, for all $i,j\ge 0$, $\mathbf{M}[i,j]=f(s_i,y_j)$ as defined above is such that
\[
  \bc{\infw{M}}{k,\ell}(m,n)\le\bc{\infw{s}}{k}(m).\bc{\infw{y}}{\ell}(n)\quad \forall m,n\ge 1.
\]
In particular, if  $\mathbf{s}$ and $\mathbf{y}$ are fixed points of Parikh-collinear morphisms, then $\bc{\infw{M}}{k,\ell}$ is bounded by a constant.
\end{theorem}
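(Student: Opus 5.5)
We show that the $(k,\ell)$-binomial class of an $m\times n$ factor of $\infw{M}$ is determined by the pair $([u]_k,[v]_\ell)$, where $u=a_i\cdots a_{i+m-1}\in\Fac_m(\infw{s})$ and $v=b_j\cdots b_{j+n-1}\in\Fac_n(\infw{y})$ are the words it comes from. Every factor of $\infw{M}$ is of the form $F(u,v)$, the $m\times n$ array whose $(p,q)$ entry is $f(u_p,v_q)$. Its $q$-th column is the word $f_{v_q}(u)$. So the map $(u,v)\mapsto F(u,v)$ from $\Fac_m(\infw{s})\times\Fac_n(\infw{y})$ onto $\Fac_{m\times n}(\infw{M})$ is surjective. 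It therefore suffices to prove that $u\sim_k u'$ and $v\sim_\ell v'$ (with $|u|=|u'|=m$, $|v|=|v'|=n$) imply $F(u,v)\sim_{k,\ell}F(u',v')$. The number of classes is then at most $\bc{\infw{s}}{k}(m)\,\bc{\infw{y}}{\ell}(n)$.

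\textbf{Step 1 (column changes).} We show $F(u,v)\sim_{k,\ell}F(u',v)$ when $u\sim_k u'$. For a letter $b$, the coding $f_b$ is a letter-to-letter morphism. For any word $w$ with $|w|\le k$,
\[
  \binom{f_b(u)}{w}=\sum_{x:\ f_b(x)=w}\binom{u}{x},
\]
the sum running over $x\in\mathcal{A}^{|w|}$. Hence $f_b(u)\sim_k f_b(u')$. By \cref{def:main}, for any $W$ of size $k'\times\ell'$ with $k'\le k$, the coefficient $\binom{F(u,v)}{W}$ is a sum of products of coefficients $\binom{f_{v_{j_t}}(u)}{W_t}$ with $|W_t|=k'\le k$. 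Each factor is unchanged when $u$ is replaced by $u'$, so the coefficients agree. No restriction on $\ell'$ is needed here.

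\textbf{Step 2 (row changes).} We show $F(u,v)\sim_{k,\ell}F(u,v')$ when $v\sim_\ell v'$. The columns of $F(u,v)$ are $c_{v_1},\dots,c_{v_n}$, where $c_b:=f_b(u)$ is a fixed column for each $b\in\mathcal{B}$. Let $g(b)=c_b$ define a letter-to-letter coding of $\mathcal{B}$ into the alphabet of columns. Then
\[
  \binom{F(u,v)}{W}=\sum_{x\in\mathcal{B}^{\ell'}}\binom{v}{x}\prod_{t=1}^{\ell'}\binom{c_{x_t}}{W_t}.
\]
This follows by grouping the index tuples $j_1<\cdots<j_{\ell'}$ in \cref{def:main} according to the word $x=v_{j_1}\cdots v_{j_{\ell'}}$. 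Since $\ell'\le\ell$, we have $\binom{v}{x}=\binom{v'}{x}$, which gives equality. Combining Steps 1 and 2 by transitivity, $F(u,v)\sim_{k,\ell}F(u,v')\sim_{k,\ell}F(u',v')$.

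\textbf{Conclusion.} The map $([u]_k,[v]_\ell)\mapsto[F(u,v)]_{k,\ell}$ is well defined and surjective onto $\Fac_{m\times n}(\infw{M})/{\sim_{k,\ell}}$, which gives the inequality. The statement about Parikh-collinear fixed points then follows from \cite[Cor.~3.6]{RigoStipulantiWhiteland-2024}: both factors on the right-hand side are bounded by constants $C_{\infw{s},k}$ and $C_{\infw{y},\ell}$.

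The only delicate point is the regrouping identity in Step 2, which is what makes the row direction work even though columns are not letters. It is a direct rewriting of the defining sum, so I expect no real obstacle.
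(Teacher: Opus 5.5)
Your proof is correct and takes the paper's approach: the regrouping identity \eqref{eq:develop}, combined with the coding identity for the $f_b$, shows that every relevant coefficient of an $m\times n$ factor depends only on $([u]_k,[v]_\ell)$. Your two-step split joined by transitivity is cosmetic. Because you treat all pattern sizes $k'\le k$, $\ell'\le\ell$ directly, you also avoid the paper's preliminary reduction to $m\ge k$, $n\ge\ell$ and its appeal to \cref{pro:limitkl}.
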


\begin{proof}
  Without loss of generality, we may assume $m\ge k$, $n\ge \ell$. Indeed, if the result is proved for large enough values then, when $m<k$, we get 
  \[
    \bc{\infw{M}}{k,\ell}(m,n)=\bc{\infw{M}}{m,\ell}(m,n)
    \le
    \bc{\infw{s}}{m}(m).\bc{\infw{y}}{\ell}(n)=
    \bc{\infw{s}}{k}(m).\bc{\infw{y}}{\ell}(n)\]
  because, if $M$ has $m$ rows (and thus $M$ has no subarray with more than $m$ rows) then $\Sp_{\le k\times\ell}(M)=\Sp_{\le m\times\ell}(M)$. The reasoning is similar when $n<\ell$.
 
Let $P$ be an array of size $k\times \ell$, thanks to \cref{pro:limitkl} we limit ourselves to such arrays. Let $i,j\ge 0$ and  we want to bound the number of values that can be taken by the binomial coefficient
\[
  \binom{\mathbf{M}[i\cdots i+m-1,j\cdots j+n-1]}{P}.
\]
In what follows, unidimensional objects, i.e., rows or columns are interpreted as words. We therefore allow to compute the binomial coefficient of a row $r$ with a column $c$. Hence $\binom{r}{c}$ should be interpreted as $\binom{r}{c^\top}$ to be (potentially) non-zero.

First note that, for all $j$, 
\begin{align*}
  \mathbf{M}[i\cdots i+m-1,j]&=f(s_i,y_j) \cdots f(s_{i+m-1},y_j)\\
                      &=f_{y_{j}}(s_i)\cdots f_{y_{j}}(s_{i+m-1})\\
                      &=f_{y_{j}}(s_i\cdots s_{i+m-1})=f_{y_j}(\mathbf{s}[i\cdots i+m-1])
\end{align*}
where we use the fact that $f_{y_j}$ is a morphism and the convention of the column on the l.h.s. is interpreted as a row on the r.h.s. 
By definition, the binomial coefficient is equal to
\begin{align}
   &\sum_{j\le j_1< \cdots <j_\ell \le j+n-1} \binom{\mathbf{M}[i\cdots i+m-1,j_1]}{P_1}\cdots \binom{\mathbf{M}[i\cdots i+m-1,j_\ell]}{P_\ell} \nonumber \\
  =&\sum_{j\le j_1< \cdots <j_\ell \le j+n-1} \binom{f_{y_{j_1}}(\mathbf{s}[i\cdots i+m-1])}{P_1}\cdots \binom{f_{y_{j_\ell}}(\mathbf{s}[i\cdots i+m-1])}{P_\ell} \nonumber \\
  =&\sum_{u_1\cdots u_\ell\in \mathcal{B}^\ell} \binom{\mathbf{y}[j\cdots j+n-1]}{u_1\cdots u_\ell}
     \ \prod_{t=1}^\ell \binom{f_{u_t}(\mathbf{s}[i\cdots i+m-1])}{P_t}  \label{eq:develop} \\
  =&\sum_{u_1\cdots u_\ell\in \mathcal{B}^\ell} \binom{\mathbf{y}[j\cdots j+n-1]}{u_1\cdots u_\ell} 
   \ \prod_{t=1}^\ell \sum_{\substack{w_t\in\mathcal{A}^k\\ f_{u_t}(w_t)=P_t}} \binom{\mathbf{s}[i\cdots i+m-1]}{w_t}  \nonumber
\end{align} 
where, we used the fact that each $f_{u_t}$ is a coding. 

The Parikh vector $\Psi_\ell(\mathbf{y}[j\cdots j+n-1])$ indexed by the words of $\mathcal{B}^\ell$ genealogically ordered 
\[
\left(\binom{\mathbf{y}[j\cdots j+n-1]}{z}\right)_{z\in\mathcal{B}^\ell}
\]
takes exactly $\bc{\infw{y}}{\ell}(n)$ values when $j$ varies. Similarly, the vector indexed by the words $u=u_1\cdots u_\ell$ of $\mathcal{B}^\ell$ whose entries are of the form 
\[
  \prod_{t=1}^\ell \sum_{\substack{w_t\in\mathcal{A}^k\\ f_{u_t}(w_t)=P_t}} \binom{\mathbf{s}[i\cdots i+m-1]}{w_t}
\]
takes at most $\bc{\infw{s}}{k}(m)$ pairwise distinct values when $i$ varies. The above computation shows that the coefficient \eqref{eq:develop} is the dot product of these two vectors. It can therefore take at most $\bc{\infw{s}}{k}(m).\bc{\infw{y}}{\ell}(n)$ distinct values.

Note that the two vectors and thus \eqref{eq:develop} remain unchanged when replacing $\mathbf{y}[j\cdots j+n-1]$ with an $\ell$-binomially equivalent factor and $\mathbf{s}[i\cdots i+m-1]$ with a $k$-binomially equivalent one. 
\qed
\end{proof}

\begin{lemma}
  Let $x_1,\ldots,x_s,y_1,\ldots,y_t\in\mathbb{R}$ and $S_x:=\sum_i x_i$, $S_y:=\sum_i y_i$. If $S_x.S_y\neq 0$, then the knowledge of
   $S_x$, $S_y$ and $A=(x_iy_j)_{1\le i\le s,1\le j\le t}$ uniquely determine $x_1,\ldots,x_s,y_1,\ldots,y_t$.
\end{lemma}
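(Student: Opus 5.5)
The plan is to recover each $x_i$ and each $y_j$ directly from the row and column sums of the rank-one matrix $A$. Row $i$ of $A$ sums to
\[
  \sum_{j=1}^t x_iy_j = x_i S_y,
\]
and column $j$ of $A$ sums to
\[
  \sum_{i=1}^s x_iy_j = S_x y_j.
\]
The hypothesis $S_x.S_y\neq 0$ guarantees that both $S_x\neq 0$ and $S_y\neq 0$. Hence
\[
  x_i=\frac{1}{S_y}\sum_{j=1}^t A_{i,j}, \qquad y_j=\frac{1}{S_x}\sum_{i=1}^s A_{i,j}
\]
for all $i\in\{1,\ldots,s\}$ and $j\in\{1,\ldots,t\}$.

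These are explicit formulas in terms of the given data $S_x$, $S_y$ and $A$ alone. To phrase uniqueness formally, take two families $(x_i,y_j)$ and $(x'_i,y'_j)$ sharing the same $S_x$, $S_y$ and the same matrix $A$. The displayed formulas then force $x_i=x'_i$ and $y_j=y'_j$ for every index. As a consistency check, summing the recovered $x_i$ gives $\sum_{i,j}A_{i,j}/S_y=S_xS_y/S_y=S_x$, though this check is not needed for uniqueness.

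There is no real obstacle here. The one point to state explicitly is why the hypothesis $S_x.S_y\neq 0$ is needed. Without it, for instance when $S_y=0$, the scaling $(x,y)\mapsto(\lambda x,\lambda^{-1}y)$ preserves $A$ and $S_y=0$. If moreover $S_x=0$, it also preserves $S_x$, so uniqueness genuinely fails in that case.
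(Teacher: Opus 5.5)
Your proof is correct and is exactly the paper's argument: the row sums of $A$ are $x_iS_y$ and the column sums are $y_jS_x$, so dividing by the nonzero $S_y$ and $S_x$ recovers every $x_i$ and $y_j$. Your closing remark on why the hypothesis is needed goes beyond the paper and is not required; it is also slightly overstated, since for $s=t=1$ the conditions $S_x=S_y=0$ already force $x_1=y_1=0$, so uniqueness does not fail there.
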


\begin{proof}
  The row and column sums of the matrix $A$ are
  \[
    R_i = \sum_{j=1}^t a_{i,j}= x_i S_y,\quad
    C_j = \sum_{i=1}^s a_{i,j}= y_j S_x.
  \]
  Hence $x_i=R_i/S_y$ and $y_j=C_j/S_x$. \qed
\end{proof}

\begin{proposition}\label{pro:complex_prod}
  Let $k,\ell\ge 1$ be integers and $\infw{s}\in\mathcal{A}^\mathbb{N}$, $\infw{y}\in\mathcal{B}^\mathbb{N}$ be infinite words. The $(k,\ell)$-binomial complexity of the direct product $\infw{s}\times\infw{y}$ satisfies
  \[
    \bc{\infw{s}\times\infw{y}}{k,\ell}(m,n)= \bc{\infw{s}}{k}(m).\bc{\infw{y}}{\ell}(n),\quad \forall m,n\ge 1.
  \]
\end{proposition}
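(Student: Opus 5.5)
The upper bound $\le$ is \cref{thm:bounded_complexity} with $\mathcal{C}=\mathcal{A}\times\mathcal{B}$ and $f$ the identity. So the plan is to prove the reverse inequality: the $(k,\ell)$-binomial class of an $m\times n$ factor $\infw{s}[i..i+m-1]\times\infw{y}[j..j+n-1]$ determines both the $k$-binomial class of $u=\infw{s}[i..i+m-1]$ and the $\ell$-binomial class of $v=\infw{y}[j..j+n-1]$. Then the map from pairs of classes to array classes is injective, since the proof of \cref{thm:bounded_complexity} already shows it is well defined. As in that proof, I first reduce to $m\ge k$, $n\ge\ell$, because otherwise both sides are unchanged when $k$ is replaced by $m$ (resp. $\ell$ by $n$). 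By \cref{pro:limitkl} and the one-dimensional Manvel remark, it then suffices to recover $\binom{u}{w}$ for $w\in\mathcal{A}^k$ and $\binom{v}{z}$ for $z\in\mathcal{B}^\ell$.

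The key computation specializes \eqref{eq:develop} with $f=\mathrm{id}$. Column $j'$ of $u\times v$ is $u\times y_{j'}$, which is the word $u$ with every letter tagged by $y_{j'}$. Hence for a column pattern $P_t$ of height $k$, the coefficient $\binom{u\times b}{P_t}$ vanishes unless every second component of $P_t$ equals $b$, in which case it equals $\binom{u}{w_t}$, where $w_t$ is the first projection of $P_t$. Take $P=w_1\times\cdots$, i.e., the $k\times\ell$ array whose $t$-th column is $w_t$ tagged by $z_t$, with $w_t\in\mathcal{A}^k$ and $z=z_1\cdots z_\ell\in\mathcal{B}^\ell$. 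Then
\[
  \binom{u\times v}{P}=\binom{v}{z}\prod_{t=1}^{\ell}\binom{u}{w_t}.
\]

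To separate the two factors, I choose all $w_t$ equal to a single $w$. This gives the numbers $a_{w,z}=\binom{v}{z}\binom{u}{w}^{\ell}$, a setting close to the preceding lemma. Summing over $w\in\mathcal{A}^k$ does not give a clean product because of the $\ell$-th power. Instead I sum over $z$: $\sum_z\binom{v}{z}=\binom{n}{\ell}\neq 0$ by \cref{pro:count_pos}, which is a known constant. This recovers $\binom{u}{w}^\ell$, and hence $\binom{u}{w}$, since it is a nonnegative integer. Symmetrically, I use a general $P$ with columns $w_t$ and sum over all choices of $(w_1,\dots,w_\ell)$. This gives $\binom{v}{z}\bigl(\sum_w\binom{u}{w}\bigr)^\ell=\binom{v}{z}\binom{m}{k}^\ell$, and since $m\ge k$ we recover $\binom{v}{z}$. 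Thus the spectrum $\Sp_{k\times\ell}(u\times v)$ determines $\Psi_k(u)$ and $\Psi_\ell(v)$, which gives injectivity and the equality.

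The main obstacle is making the formula for $\binom{u\times b}{P_t}$ rigorous, together with the reduction step for small $m$ or $n$. I will handle the first by the observation that a letter $(a,b')$ occurs in $u\times b$ only if $b'=b$. For the second, I will check that replacing $k$ by $\min(k,m)$ is compatible on both sides, using that the $k$-binomial complexity for $m<k$ equals the $m$-binomial complexity at length $m$.
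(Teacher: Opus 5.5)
Your proof is correct, but it reaches the lower bound by a different route from the paper's. The upper bound and the reduction to $m\ge k$, $n\ge\ell$ are the same in both.

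\textbf{The paper's route.} It works only with thin patterns:
\begin{itemize}
\item $1\times 1$ patterns give the products $|u|_a|v|_b$. An auxiliary lemma (a rank-one matrix with known nonzero row and column sums determines its factors) then recovers both Parikh vectors.
\item $k\times 1$ patterns $w\times b$ give $|v|_b\binom{u}{w}$.
\item $1\times\ell$ patterns $a\times z$ give $\binom{v}{z}|u|_a^{\ell}$.
\end{itemize}
In the last two steps one divides after choosing a letter $b$ (resp.\ $a$) of positive multiplicity.

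\textbf{Your route.} You use only full-size $k\times\ell$ patterns whose columns have constant second component. You separate the two factors by marginalizing:
\begin{itemize}
\item summing over $z$ uses $\sum_z\binom{v}{z}=\binom{n}{\ell}$, after which you take the harmless $\ell$-th root of a nonnegative integer;
\item summing over $(w_1,\dots,w_\ell)$ uses $\sum_w\binom{u}{w}=\binom{m}{k}$.
\end{itemize}
Both constants are nonzero thanks to the reduction.

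\textbf{What each buys.} Your argument needs neither the auxiliary lemma, nor the preliminary matching of Parikh vectors, nor the choice of a letter with positive count. It also proves a slightly sharper fact: the top spectrum $\Sp_{k\times\ell}(u\times v)$ alone already determines $([u]_k,[v]_\ell)$. The paper's argument shows instead that the thin patterns of sizes $1\times1$, $k\times1$ and $1\times\ell$ suffice, and each coefficient there factors trivially.

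A minor point: your appeal to \cref{pro:limitkl} is unnecessary for the injectivity direction. The $k\times\ell$ coefficients are already part of the definition of $\sim_{k,\ell}$, and the one-dimensional Manvel remark alone then turns $\Psi_k(u)$, $\Psi_\ell(v)$ into the classes $[u]_k$, $[v]_\ell$.
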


\begin{proof}

  Let $u\in\Fac_m(\infw{s})$ and $v\in\Fac_n(\infw{y})$.   As in the proof of \cref{thm:bounded_complexity}, we may assume without loss of generality that $m\ge k$, $n\ge\ell$.

  Let $\pi_1$ (resp. $\pi_2$ be the morphism projecting a word $(x_1,y_1)\cdots (x_p,y_p)$ over $\mathcal{A}\times\mathcal{B}$ to its first component $x_1\cdots x_p$ (resp. second component $y_1\cdots y_p$).
  
  Let $P$ be an array of size $r\times s$ over $\mathcal{A}\times\mathcal{B}$ with $r\le k$ and $s\le\ell$. Assume first that, for each column $P_j$ of $P$, the second component of each element is constant: i.e., $\pi_2(P_{i,j})=b_j$ for all $i\in\{1,\ldots,r\}$ for some $b_j\in\mathcal{B}$. Then 
  
  \[
    \binom{u\times v}{P}=\binom{v}{b_1\cdots b_s}\binom{u}{\pi_1(P_1)}\cdots \binom{u}{\pi_1(P_s)}.
  \]
We will use this relation with $1\times 1$, $k\times 1$ and $1\times \ell$ arrays. Note that when a column of $P$ has a non-constant second component, then $\binom{u\times v}{P}=0$ because $P$ cannot occur in a direct product.

  Assume that $u\times v\sim_{k,\ell} u'\times v'$, we will prove that $u\sim_k u'$ and $v\sim_\ell v'$. This shows that $\bc{\infw{s}\times\infw{y}}{k,\ell}(m,n)\ge \bc{\infw{s}}{k}(m).\bc{\infw{y}}{\ell}(n)$ which is enough thanks to the previous theorem.

  Let $(a,b)\in\mathcal{A}\times\mathcal{B}$. We have
  \[
    |u|_a.|v|_b=\binom{u\times v}{(a,b)}=  \binom{u'\times v'}{(a,b)}=|u'|_a.|v'|_b
  \]
  and $\sum_{a\in\mathcal{A}} |u|_a=m=\sum_{a\in\mathcal{A}} |u'|_a$, $\sum_{b\in\mathcal{B}} |v|_b=n=\sum_{b\in\mathcal{B}} |v'|_b$. Thanks to the above lemma, $u$ and $u'$ (resp. $v$ and $v'$) have the same Parikh vector.

  Let $b\in\mathcal{B}$ be such that $|v|_b>0$. Let $w\in\mathcal{A}^k$. So, we get
   \[
    |v|_b\binom{u}{w}=\binom{u\times v}{w\times b}=  \binom{u'\times v'}{w\times b}=|v'|_b\binom{u'}{w}
  \]
  and $u\sim_k u'$.

  Finally, let $a\in\mathcal{A}$ be such that $|u|_a>0$. Let $z\in\mathcal{B}^\ell$. We get
  \[
    \binom{v}{z} |u|_a^{|z|} =\binom{u\times v}{a\times z}=  \binom{u'\times v'}{a\times z}= \binom{v'}{z} |u'|_a^{|z|} 
  \]
  and $v\sim_\ell v'$.
  \qed
\end{proof}

\begin{corollary}\label{cor:sturmian}
  Let $\infw{s}$ and $\infw{y}$ be two Sturmian words. For all $k,\ell,m,n\ge 1$, we have  
  \[
    \bc{\infw{s}\times\infw{y}}{k,\ell}(m,n)=\left\{\begin{array}{ll}
      4,&\text{ if }k=\ell=1;\\
      2(n+1),&\text{ if }k=1, \ell\ge 2;\\
      2(m+1),&\text{ if }k\ge 2, \ell=1;\\
      (m+1)(n+1),&\text{ if }k,\ell\ge 2.\\
    \end{array}\right.
\]
\end{corollary}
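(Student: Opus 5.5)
The plan is to apply \cref{pro:complex_prod}, which gives $\bc{\infw{s}\times\infw{y}}{k,\ell}(m,n)=\bc{\infw{s}}{k}(m).\bc{\infw{y}}{\ell}(n)$. This reduces the statement to the known one-dimensional binomial complexities of Sturmian words. So it suffices to establish, for any Sturmian word $\infw{x}$ and all $n\ge 1$, that $\bc{\infw{x}}{1}(n)=2$ and $\bc{\infw{x}}{j}(n)=n+1$ for all $j\ge 2$. The four cases of the statement then follow by multiplying: $2\cdot 2=4$, $2(n+1)$, $2(m+1)$ and $(m+1)(n+1)$.

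For $j=1$ we use that Sturmian words are balanced and aperiodic. Factors of length $n$ then have exactly two possible Parikh vectors, i.e., $\bc{\infw{x}}{1}(n)=2$. At most two follows from balance; at least two holds because otherwise the word would be periodic.

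For $j\ge 2$ we use the classical result of Rigo and Salimov \cite{RigoSalimov-2015}: for Sturmian words, two distinct factors of the same length are never $2$-binomially equivalent. Hence, for every $j\ge2$, $\bc{\infw{x}}{j}(n)$ equals the factor complexity, which is $n+1$. Since $\sim_j$ refines $\sim_2$ and coincides with equality on factors of length $n$ when $j\ge 2$, the count is exactly $n+1$, including the small cases $n<j$ where it is just the factor complexity anyway.

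The only genuine input is this Rigo--Salimov theorem. If I had to justify it rather than cite it, the main obstacle would be showing that $\binom{u}{01}$, together with the Parikh vector, determines a Sturmian factor $u$ of given length. I would try to argue via the mechanical-word description: factors of equal length with the same Parikh vector differ by swapping $01\leftrightarrow10$ at certain positions, and balance forces any two distinct such factors to have different values of $\binom{u}{01}$. Everything else is direct substitution into \cref{pro:complex_prod}.
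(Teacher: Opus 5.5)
Your proposal is correct and matches the paper's intended argument. The paper states this corollary without further proof immediately after \cref{pro:complex_prod}; it follows by combining that product formula with two one-dimensional facts, both of which the paper also takes from the literature: a Sturmian word has abelian complexity $2$ (balanced and aperiodic), and, by Rigo--Salimov, it has $j$-binomial complexity $n+1$ for every $j\ge 2$.
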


\section{Thue--Morse word, bit-wise complement and binomial equivalence}\label{sec:bitwise}

Before considering a two-dimensional setting, we collect some useful facts about binomial equivalence classes of the Thue--Morse word. We let $\overline{u}$ denote the image of the word $u\in\{0,1\}^*$ under the involution morphism $i\mapsto 1-i$, also called {\em bit-wise complement}. Our aim is to describe precisely when an equivalence class $[u]_k$ is fixed by bit-wise complement.

The reader may already notice that for $n\ge 2^k$, from \cref{thm:lejeune}, $\bc{\infw{t}}{k}(n)$ is odd if and only if $n\equiv 0\pmod{2^k}$. Our aim in this section is to show that this is the only case where there is a class such that $[u]_k=[\overline{u}]_k$. In all other cases, the equivalence classes occur in pairs $[u]_k\neq [\overline{u}]_k$.

\begin{remark}
  Note that, for all words $u,w\in\{0,1\}^*$, we have $\binom{\overline{u}}{w}=\binom{u}{\overline{w}}$. Hence
  \begin{equation}\label{eq:bitwise}
    \binom{u}{w}=\binom{\overline{u}}{w} \Leftrightarrow \binom{u}{w}=\binom{u}{\overline{w}}.
  \end{equation}
\end{remark}

Let $m\ge 1$.  It is quite usual, when one aims to distinguish two words $x$ and $y$ that are not $k$-binomially equivalent, to look for a specific subword $z$ of length~$k$ such that the corresponding binomial coefficients $\binom{x}{z}$ and $\binom{y}{z}$ are distinct. Here we consider the two particular words $\alpha_m$ and $\beta_m=\overline{\alpha_m}$ where $\alpha_m$ is the word of length $m$ starting with $0$ and alternating $0$ and $1$. For instance, $\alpha_4=0101$ and $\beta_4=1010$. We also set $\alpha_0=\beta_0=\varepsilon$. Note that this particular alternating word was already considered in \cite{GRW}. 
  As a consequence of Chu--Vandermonde identity, we have 
  \[
    \binom{uv}{\alpha_m}=\sum_{\substack{i\text{ even}\\ 0\le i\le m}}
    \binom{u}{\alpha_i} \binom{v}{\alpha_{m-i}}+
      \sum_{\substack{i\text{ odd}\\ 0\le i\le m}}
      \binom{u}{\alpha_i} \binom{v}{\beta_{m-i}}
    \]
      and
  \[
    \binom{uv}{\beta_m}=\sum_{\substack{i\text{ even}\\ 0\le i\le m}}
    \binom{u}{\beta_i} \binom{v}{\beta_{m-i}}+
      \sum_{\substack{i\text{ odd}\\ 0\le i\le m}}
      \binom{u}{\beta_i} \binom{v}{\alpha_{m-i}}.
    \]
    
  \begin{definition}\label{def:deltasigma}
  Let $m\ge 0$. Set
  \[
    \delta_m(u):=\binom{u}{\alpha_m}-\binom{u}{\beta_m}=\binom{u}{\alpha_m}-\binom{\overline{u}}{\alpha_m}=\binom{\overline{u}}{\beta_m}-\binom{u}{\beta_m}
  \]
  and
   \[
    \sigma_m(u):=\binom{u}{\alpha_m}+\binom{u}{\beta_m}.
  \]
  \end{definition}
We will often make use of the fact that
\begin{equation}\label{eq:simplify_sigma}
  \sigma_0(u)=2 \text{ and }\sigma_1(u)=|u|.
\end{equation}

  Now, since $\alpha_m$ and $\beta_m$ alternate $0$'s and $1$'s, Chu--Vandermonde identity gives
  \begin{align*}
    \delta_m(uv)&=\sum_{\substack{i\text{ even}\\ 0\le i\le m}}
    \left[ \binom{u}{\alpha_i} \binom{v}{\alpha_{m-i}}-
    \binom{u}{\beta_i} \binom{v}{\beta_{m-i}} \right] \\
    & +
      \sum_{\substack{i\text{ odd}\\ 0\le i\le m}}
    \left[   \binom{u}{\alpha_i} \binom{v}{\beta_{m-i}}-\binom{u}{\beta_i} \binom{v}{\alpha_{m-i}} \right]
\end{align*}
and use the identity $ab-cd=\frac12[(a-c)(b+d)+(a+c)(b-d)]$ to obtain a convolution of $\delta_i(u)$'s and $\sigma_j(v)$'s
\begin{align}
    \delta_m(uv)&=\frac12 \sum_{\substack{i\text{ even}\\ 0\le i\le m}}
    \left[ \delta_i(u) \sigma_{m-i}(v)+ \sigma_i(u) \delta_{m-i}(v)\right] \nonumber \\
    & +
      \frac12\sum_{\substack{i\text{ odd}\\ 0\le i\le m}}
  \left[ \delta_i(u) \sigma_{m-i}(v)- \sigma_i(u) \delta_{m-i}(v) \right] \nonumber \\
                &= \frac12 \sum_{i=0}^m \delta_i(u) \sigma_{m-i}(v) 
  + \frac12 \sum_{i=0}^m (-1)^i \sigma_i(u) \delta_{m-i}(v). \label{eq:identity_deltam}
\end{align}

  \begin{remark}
 Let $u$ be a word in $\{0,1\}^*$. If the class $[u]_k$ is fixed by bit-wise complement, i.e., $[u]_k=[\overline{u}]_k$, then 
  \begin{equation}\label{eq:class_fixed}
    \delta_m(u)=0,\quad \forall m\le k.
  \end{equation}
  Hence, if there exists $m\le k$ such that $\delta_m(u)\neq 0$, then $[u]_k\neq [\overline{u}]_k$.
  \end{remark}

  By \eqref{eq:Oschenschlager}, $\varphi^k(a)\sim_k\varphi^k(\overline{a})$, $a\in\{0,1\}$, hence we have
  \[
    \delta_m(\varphi^k(a))=0,\quad \forall m\le k
  \]
  and, by the congruence property, the class $[\varphi^k(z)]_k$ is fixed by bit-wise complement, for all $z\in\{0,1\}^*$.

 Recall that $\nu_2(n)$ is the {\em $2$-adic valuation} of $n$, the exponent of the highest power of $2$ dividing the non-zero integer $n$.   
  \begin{lemma}\label{lem:delta_prefix}
    Let $k\ge 1$. For any non-empty proper prefix (resp. suffix) $p$ of $\varphi^k(a)$, $a\in\{0,1\}$, there exists $m\le k$ such that $\delta_m(p)\neq 0$. Moreover, the least such $m$ is equal to $1+\nu_2(|p|)$. In particular, $\delta_k(p)\neq0$ and $\delta_m(p)=0$, for all $m<k$, if and only if $|p|=2^{k-1}$.
  \end{lemma}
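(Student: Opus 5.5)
We argue by induction on $k$, using the convolution identity \eqref{eq:identity_deltam} together with the recursive structure $\varphi^k(a)=\varphi^{k-1}(a)\varphi^{k-1}(\overline{a})$. It suffices to treat prefixes. For suffixes, the suffix of length $j$ of $\varphi^k(a)$ is the reversal of the prefix of length $j$ of $\varphi^k(a)$ or of $\varphi^k(\overline{a})$ (Thue--Morse blocks are palindromes or antipalindromes). Reversal maps $\alpha_m$ to $\alpha_m$ or $\beta_m$ according to the parity of $m$, so it changes $\delta_m$ by at most a sign. Complementing $u$ negates $\delta_m(u)$. Hence the first index $m$ with $\delta_m\neq 0$ depends only on $|p|$, and we may even replace $\varphi^k(a)$ by an arbitrary factor of this form. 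Since $\delta_0(u)=0$ for every $u$, the claim is equivalent to the following: if $|p|=j$ with $0<j<2^k$ and $\nu:=\nu_2(j)$, then $\delta_m(p)=0$ for $m\le \nu$ and $\delta_{\nu+1}(p)\neq 0$. Note that $\nu+1\le k$. The ``in particular'' clause is then immediate, because $1+\nu_2(j)=k$ with $j<2^k$ forces $j=2^{k-1}$.

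\emph{Vanishing part.} Write $j=2^{\nu}q$ with $q$ odd. A prefix of length $j$ of $\varphi^k(a)$ equals $\varphi^{\nu}(z)$, where $z$ is a prefix of length $q$ of $\varphi^{k-\nu}(a)$. By \eqref{eq:Oschenschlager} and the congruence property, $[\varphi^\nu(z)]_\nu$ is fixed by complement, so $\delta_m(p)=0$ for all $m\le\nu$.

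\emph{Non-vanishing part.} Here we must show $\delta_{\nu+1}(\varphi^\nu(z))\neq 0$ whenever $|z|=q$ is odd. We split $z$ into letters, $z=c_1\cdots c_q$, and apply \eqref{eq:identity_deltam} repeatedly. Every block $\varphi^\nu(c)$ has $\delta_i=0$ for $i\le\nu$, so in $\delta_{\nu+1}(xy)$ the only surviving terms are those with $i=\nu+1$ on one side and index $0$ on the other. Using $\sigma_0=2$ from \eqref{eq:simplify_sigma}, this gives
\[
\delta_{\nu+1}(xy)=\delta_{\nu+1}(x)+(-1)^{\nu+1}\delta_{\nu+1}(y)
\]
whenever $x$ and $y$ are concatenations of $\varphi^\nu$-blocks, because all $\delta_i$ with $i\le\nu$ of both factors vanish. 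Now set $d:=\delta_{\nu+1}(\varphi^\nu(0))$, so that $\delta_{\nu+1}(\varphi^\nu(1))=-d$. We have $d\neq0$ by \eqref{eq:Oschenschlager}: $\varphi^\nu(0)\not\sim_{\nu+1}\varphi^\nu(1)$, while by the vanishing part these two words already agree on $\delta_m$ for all $m\le \nu$, so they must differ at $\delta_{\nu+1}$. Two small points need checking here. First, one must confirm that the distinguishing subword can be taken to be $\alpha_{\nu+1}$. This is true, since $\binom{\varphi^\nu(0)}{\alpha_{\nu+1}}-\binom{\varphi^\nu(1)}{\alpha_{\nu+1}}=\pm\nu!\,2^{\binom{\nu}{2}}$ by a direct induction with the same identity; the paper's remark after \eqref{eq:Oschenschlager} hints at this, and I would verify it via \eqref{eq:identity_deltam} on $\varphi^{\nu}(0)=\varphi^{\nu-1}(0)\varphi^{\nu-1}(1)$. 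Second, when $\nu=0$ we get $\delta_1(c)=\pm1$ directly.

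Iterating the recurrence gives $\delta_{\nu+1}(\varphi^\nu(z))=d\sum_{t=1}^q \epsilon_t\,(\pm1)$ with signs $\pm1$. This is a sum of $q$ terms, each equal to $\pm d$, and $q$ is odd, so the sum is an odd multiple of $d$ and therefore nonzero.

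The main obstacle is the claim that $d\neq0$, that is, that $\alpha_{\nu+1}$ itself witnesses Ochsenschläger's non-equivalence. I would prove it by computing $\delta_{\nu+1}(\varphi^\nu(0))$ inductively with \eqref{eq:identity_deltam}. Since the $\delta_i$ vanish for small $i$, only terms of the shape $\delta_\nu(\varphi^{\nu-1}(\cdot))\,\sigma_1(\varphi^{\nu-1}(\cdot))=\pm\delta_\nu\cdot 2^{\nu-1}$ survive, and they combine without cancellation, giving the recursion $d_{\nu}=\pm 2^{\nu-1}\cdot\nu\cdot d_{\nu-1}$ or something similar. The parity argument then finishes the proof.
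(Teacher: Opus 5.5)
Your argument is correct in substance, but it takes a different route from the paper. The paper inducts on $k$ via $\varphi^{k+1}(0)=\varphi^k(0)\varphi^k(1)$, with three cases:
\begin{itemize}
\item short prefixes come from the induction hypothesis;
\item the full block $\varphi^k(0)$ comes from \eqref{eq:deltak1};
\item a prefix $\varphi^k(0)r$ is handled by showing the leading block is invisible at low levels, i.e.\ $\delta_j(\varphi^k(0)r)=\delta_j(r)$ for $j\le 1+\nu_2(|r|)$, together with $\nu_2(2^k+|r|)=\nu_2(|r|)$.
\end{itemize}
Suffixes are dismissed there as ``the same''.

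You instead factor $p=\varphi^{\nu}(z)$ with $|z|=q$ odd in one step. Vanishing follows from Ochsenschl\"ager, and non-vanishing from additivity of $\delta_{\nu+1}$ over $\varphi^\nu$-blocks. This is exactly the mechanism the paper itself uses later in the proof of \cref{lem:new5}. Your route avoids both the induction and the case split. It also gives the explicit value $\delta_{1+\nu_2(|p|)}(p)=2^{\binom{\nu}{2}}\,\delta_1(z)$ with $\delta_1(z)$ odd, which is consistent with \cref{tab:delta}. It treats suffixes properly, and you do not even need the reversal argument: a suffix of length $2^\nu q$ of $\varphi^k(a)$ is $\varphi^\nu$ of the suffix of length $q$ of $\varphi^{k-\nu}(a)$. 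Both proofs rest on the same external input, the non-vanishing in \eqref{eq:deltak1}.

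There are three slips to fix, none of them fatal.
\begin{itemize}
\item \emph{Sign in the additivity formula.} In \eqref{eq:identity_deltam} the only surviving term of the second sum is $i=0$. So $\delta_{\nu+1}(xy)=\delta_{\nu+1}(x)+\delta_{\nu+1}(y)$, without the factor $(-1)^{\nu+1}$. Hence $\delta_{\nu+1}(\varphi^\nu(z))=d\,(|z|_0-|z|_1)$, and your parity argument is unaffected.
\item \emph{First justification of $d\neq0$.} Non-equivalence at level $\nu+1$ plus agreement at lower levels does not suffice on its own. The witness of $\varphi^\nu(0)\not\sim_{\nu+1}\varphi^\nu(1)$ could be a length-$(\nu+1)$ word other than $\alpha_{\nu+1}$. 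You flagged this yourself; the direct computation (or a citation of \eqref{eq:deltak1}) is genuinely required.
\item \emph{Closed form for $d$.} Your formula $\pm\nu!\,2^{\binom{\nu}{2}}$ is wrong: $\delta_3(0110)=2$, not $4$. Carry out your proposed computation with $u=\varphi^{\nu-1}(0)$ and $v=\overline{u}$. The terms $\frac12\delta_{\nu+1}(u)\sigma_0(v)$ and $\frac12\sigma_0(u)\delta_{\nu+1}(v)$ cancel. The two terms pairing $\delta_\nu$ with $\sigma_1=2^{\nu-1}$ add up. This gives $d_\nu=2^{\nu-1}d_{\nu-1}$, hence $d_\nu=2^{\binom{\nu}{2}}\neq 0$, in agreement with \eqref{eq:deltak1}.
\end{itemize}
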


  \cref{tab:delta} contains the value of the first few $\delta$-functions for proper prefixes of $\varphi^4(0)$. In bold, we have highlighted the first level where the value is non-zero.
  \begin{table}[h!t]
  \[
\begin{array}{c|rrrrrrrrrrrrrrr}
|p| & 1 & 2 & 3 & 4 & 5 & 6 & 7 & 8 & 9 & 10 & 11 & 12 & 13 & 14 & 15 \\
  \hline
  \nu_2 & 0&1&0&2&0&1&0&3&0&1&0&2&0&1&0\\
\delta_1 & \mathbf{1} & 0 & \mathbf{-1} & 0 & \mathbf{-1} & 0 & \mathbf{1} & 0 & \mathbf{-1} & 0 & \mathbf{1} & 0 & \mathbf{1} & 0 & \mathbf{-1} \\
\delta_2 & 0 & \mathbf{1} & 2 & 0 & 2 & \mathbf{-1} & -4 & 0 & 4 & \mathbf{-1} & -6 & 0 & -6 & \mathbf{1} & 8 \\
\delta_3 & 0 & 0 & 0 & \mathbf{2} & 0 & 4 & 8 & 0 & -8 & 4 & 16 & \mathbf{-2} & 16 & -8 & -32 \\
\delta_4 & 0 & 0 & 0 & 0 & 2 & 0 & -2 & \mathbf{8} & 18 & 0 & -18 & 16 & -20 & 32 & 84 \\
\end{array}
\]
\caption{The first few $\delta$-functions for prefixes of $\varphi^4(0)$.}\label{tab:delta}
  \end{table}
  
  \begin{proof}
    We prove the result for prefixes by induction on $k\ge 1$. The proof is the same for suffixes. W.l.o.g. take $a=0$. If $k=1$, $p=0$ and $\delta_1(0)=1$.

    Assume that the property holds for $k\ge 1$ and prove it for $k+1$. The word~$p$ is a proper prefix of $\varphi^{k+1}(0)=\varphi^k(0)\varphi^k(1)$. If $|p|<2^k$, then we may apply the induction hypothesis.

    Now assume that $p=\varphi^k(0)r$.

    If $r$ is empty, since $\varphi^k(0)\sim_k\varphi^k(1)$, we get $\delta_m(\varphi^k(0))=0$ for all $m\le k$. We know from \cite[Prop.~4.1]{GRW} that 
\begin{equation}\label{eq:deltak1}
  \delta_{k+1}(\varphi^k(0))=\binom{\varphi^k(0)}{\alpha_{k+1}}-\binom{\varphi^k(0)}{\beta_{k+1}}=2^{\binom{k}{2}}\neq 0.
\end{equation}
Finally assume that $r$ is a non-empty proper prefix of $\varphi^k(1)$. Let $R:=1+\nu_2(|r|)\le k$. By induction hypothesis, $\delta_{R}(r)\neq 0$ and  $\delta_{n}(r)= 0$ for $n<R$. Using the fact that $\varphi^k(0)\sim_k \varphi^k(1)$ and $R\le k$, we get
 \begin{align*}
   \delta_R(\varphi^k(0)r)&=\binom{\varphi^k(0)r}{\alpha_R}-\binom{\varphi^k(0)r}{\beta_R}\\
                          &= \sum_{\substack{i\text{ even}\\ 0\le i\le R}}
    \binom{\varphi^k(0)}{\alpha_i} \binom{r}{\alpha_{R-i}}+
      \sum_{\substack{i\text{ odd}\\ 0\le i\le R}}
   \binom{\varphi^k(0)}{\alpha_i} \binom{r}{\beta_{R-i}}\\
   &-\sum_{\substack{i\text{ even}\\ 0\le i\le R}}
    \binom{\varphi^k(0)}{\beta_i} \binom{r}{\beta_{R-i}}-
      \sum_{\substack{i\text{ odd}\\ 0\le i\le R}}
   \binom{\varphi^k(0)}{\beta_i} \binom{r}{\alpha_{R-i}}\\
   &= \sum_{\substack{i\text{ even}\\ 0\le i\le R}}
    \binom{\varphi^k(0)}{\alpha_i} \delta_{R-i}(r)-
      \sum_{\substack{i\text{ odd}\\ 0\le i\le R}}
   \binom{\varphi^k(0)}{\alpha_i} \delta_{R-i}(r)\\
   &= \sum_{i=0}^R (-1)^i 
    \binom{\varphi^k(0)}{\alpha_i} \delta_{R-i}(r)=\delta_R(r).
 \end{align*}
 The same calculation gives $\delta_j(\varphi^k(0)r)=0$ for all $j<R$.
 Since $|\varphi^k(0)|=2^k$ and $|r|<2^k$, $\nu_2(2^k+|r|)=\nu_2(|r|)$.
 \qed
  \end{proof}

  \begin{lemma}\label{lem:new5}
    Let $a\in\{0,1\}$ and $v\in\{0,1\}^*$. For all $\ell\ge 1$,
    \[
      \delta_{\ell+1}\left( \varphi^{\ell-1}(a)\varphi^\ell(v)\varphi^{\ell-1}(\overline{a}) \right) = 2^{\binom{\ell}{2}}\left( \delta_1(v) + (-1)^{a} (2|v|+1)\right).
    \]
    In particular, $\delta_{\ell+1}\left( \varphi^{\ell-1}(a)\varphi^\ell(v)\varphi^{\ell-1}(\overline{a}) \right)\neq 0$.
  \end{lemma}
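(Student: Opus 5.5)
The plan is to write the word as $XYZ$ with
\[
X=\varphi^{\ell-1}(a),\qquad Y=\varphi^{\ell}(v),\qquad Z=\varphi^{\ell-1}(\overline{a})=\overline{X}.
\]
Then apply the convolution identity \eqref{eq:identity_deltam} twice, first to $XY$ and then to $(XY)Z$. Only a handful of terms survive, because most low-order $\delta$'s vanish.

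\emph{Step 1: the needed values.}
\begin{enumerate*}[label=(\roman*)]
\item By \eqref{eq:Oschenschlager}, $X\sim_{\ell-1}\overline{X}$. Hence $\delta_m(X)=\delta_m(Z)=0$ for $m<\ell$.
\item By \eqref{eq:deltak1} applied with $k=\ell-1$, $\delta_\ell(\varphi^{\ell-1}(0))=2^{\binom{\ell-1}{2}}$. Since complementing changes the sign of every $\delta_m$, we get $\delta_\ell(X)=(-1)^a2^{\binom{\ell-1}{2}}=-\delta_\ell(Z)$. For the same reason, $\delta_{\ell+1}(Z)=-\delta_{\ell+1}(X)$, whatever that value is. For $\ell=1$ this reads $\delta_1(a)=(-1)^a$, which is consistent.
\item Write $Y$ as the concatenation of the blocks $\varphi^\ell(b)$, $b$ a letter of $v$. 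Each block satisfies $\delta_m(\varphi^\ell(b))=0$ for $m\le\ell$. In \eqref{eq:identity_deltam} with $m=\ell+1$, every mixed term has some factor $\delta_i$ with $1\le i\le \ell$, so it vanishes. Using $\sigma_0=2$, $\delta_{\ell+1}$ is therefore additive over the blocks. By \eqref{eq:deltak1}, this gives $\delta_{\ell+1}(Y)=2^{\binom{\ell}{2}}(|v|_0-|v|_1)=2^{\binom{\ell}{2}}\delta_1(v)$. Also $\delta_m(Y)=0$ for $m\le \ell$.
\item By \eqref{eq:simplify_sigma}, $\sigma_1(Z)=2^{\ell-1}$ and $\sigma_1(XY)=2^{\ell-1}+2^\ell|v|$.
\end{enumerate*}

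\emph{Step 2: the product $XY$.} Apply \eqref{eq:identity_deltam} to $XY$ with the facts above. This yields $\delta_m(XY)=0$ for $m<\ell$ and $\delta_\ell(XY)=\delta_\ell(X)$. At order $\ell+1$ it gives
\[
\delta_{\ell+1}(XY)=\tfrac12\delta_\ell(X)\,2^\ell|v|+\delta_{\ell+1}(X)+\delta_{\ell+1}(Y).
\]

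\emph{Step 3: the product $(XY)Z$.} Apply \eqref{eq:identity_deltam} again. The only surviving terms are
\[
\tfrac12\delta_\ell(XY)\sigma_1(Z)+\delta_{\ell+1}(XY)+\delta_{\ell+1}(Z)-\tfrac12\sigma_1(XY)\delta_\ell(Z),
\]
where the sign $(-1)^1$ in the last term comes from the second sum of \eqref{eq:identity_deltam}. Now substitute the values from Step 1. The two terms $\pm\delta_{\ell+1}(X)$ cancel, and $\delta_\ell(Z)=-\delta_\ell(X)$. Collecting the remaining terms gives
\[
\delta_\ell(X)\,2^{\ell-1}(2|v|+1)+2^{\binom{\ell}{2}}\delta_1(v).
\]
Finally, $\binom{\ell-1}{2}+\ell-1=\binom{\ell}{2}$, so $\delta_\ell(X)\,2^{\ell-1}=(-1)^a2^{\binom{\ell}{2}}$, which is the claimed formula.

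\emph{Non-vanishing.} Since $|\delta_1(v)|\le|v|<2|v|+1$, the bracket $\delta_1(v)+(-1)^a(2|v|+1)$ is nonzero, so the whole expression is nonzero.

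The main obstacle is bookkeeping rather than any single hard step. The delicate points are tracking the signs $(-1)^i$ in the second sum of \eqref{eq:identity_deltam}, and checking that no cross term with $2\le i\le \ell-1$ survives. The latter holds because every such term contains a $\delta_j$ with $j<\ell$ of one factor, or a $\delta_j$ with $j\le\ell$ of $Y$, and all of these are zero.
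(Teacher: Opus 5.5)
Your proof is correct and follows essentially the same route as the paper. Both split the word as $XYZ$ with $Z=\overline{X}$, apply the convolution identity \eqref{eq:identity_deltam} twice, and use $\delta_{\ell+1}(\varphi^\ell(v))=2^{\binom{\ell}{2}}\delta_1(v)$ together with \eqref{eq:deltak1}. The small differences are these: you derive the block-additivity of $\delta_{\ell+1}$ from the convolution identity instead of the paper's cancellation argument on occurrences spanning several blocks, and you explicitly track the terms $\delta_{\ell+1}(X)$ and $\delta_{\ell+1}(Z)=-\delta_{\ell+1}(X)$ and their cancellation, which the paper's displayed computation leaves implicit.
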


  \begin{proof}
     Recall from \eqref{eq:class_fixed} that, for all $i\le \ell-1$,
    \[
      \delta_i(\varphi^{\ell-1}(a))=\delta_i(\varphi^{\ell-1}(\overline{a}))=0
    \]
    and, for all $j\le\ell$, $\delta_j(\varphi^{\ell}(v))=0$.
Moreover, \eqref{eq:deltak1} (or $\delta_1(a)=(-1)^{a}$ when $\ell=1$) gives
    \[
      \delta_\ell(\varphi^{\ell-1}(a))=(-1)^{a} 2^{\binom{\ell-1}{2}}=-\delta_\ell(\varphi^{\ell-1}(\overline{a})).
    \]
    
    Note that, \eqref{eq:deltak1} can be extended to a word $v=v_1\cdots v_t$ as follows
    \[
      \delta_{\ell+1}(\varphi^\ell(v))=2^{\binom{\ell}{2}} \delta_1(v).
    \]
    Indeed,
    \[
      \delta_{\ell+1}(\varphi^\ell(v))=\binom{\varphi^\ell(v_1)\cdots \varphi^\ell(v_t)}{\alpha_{\ell+1}}-\binom{\varphi^\ell(v_1)\cdots\varphi^\ell(v_t)}{\beta_{\ell+1}}.
    \]
    Whenever an occurrence of \(\alpha_{\ell+1}\) or \(\beta_{\ell+1}\) uses at least two \(\ell\)-blocks, at most \(\ell\) letters are selected from each block. Those contributions cancel because $\varphi^\ell(0)\sim_\ell\varphi^\ell(1)$. Only occurrences contained entirely in one block remain. By \eqref{eq:deltak1}, a \(0\)-block contributes \(2^{\binom{\ell}{2}}\) and a \(1\)-block contributes negatively.

    We may now apply the convolution formula \eqref{eq:identity_deltam} twice and \eqref{eq:simplify_sigma} to get
    \begin{align*}
      \delta_{\ell+1}\left( \varphi^{\ell-1}(a)\varphi^\ell(v)\varphi^{\ell-1}(\overline{a}) \right) &=& \delta_{\ell+1}(\varphi^\ell(v))+\delta_\ell(\varphi^{\ell-1}(a))\left( 2^\ell |v|+2^{\ell-1}\right)\\
      &=& 2^{\binom{\ell}{2}} \delta_1(v) + (-1)^{a} 2^{\binom{\ell-1}{2}} 2^{\ell-1} \left( 2|v|+1\right).
    \end{align*}
    The expression is non-zero because $|\delta_1(v)|\le |v|<2|v|+1$.
    \qed
  \end{proof}
  
\begin{proposition}\label{pro:counting_classes}
  Let $k\ge 1$. Consider the quotient of the set of factors of length~$n\ge 2^k$ of the Thue--Morse word by the $k$-binomial equivalence relation. The bit-wise complement maps every word of a $k$-binomial equivalence class to a word of another class and conversely, or it fixes a class. If $2^k|n$, then exactly one $k$-binomial equivalence class is fixed by bit-wise complement. Otherwise, no equivalence class is fixed.
\end{proposition}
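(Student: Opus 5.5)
Since $\Fac(\infw{t})$ is closed under bit-wise complement and $u\sim_k v\Leftrightarrow \overline{u}\sim_k\overline{v}$ (because $\binom{\overline u}{w}=\binom{u}{\overline w}$), complementation induces an involution on $\Fac_n(\infw{t})/{\sim_k}$. So the first sentence of \cref{pro:counting_classes} is immediate, and everything reduces to counting fixed points of this involution. The plan is to use the standard factorization of Thue--Morse factors. Every factor $u$ of length $n\ge 2^k$ can be written as $u=p\,\varphi^k(z)\,s$, where $p$ is a (possibly empty) proper suffix of some $\varphi^k(a)$ and $s$ is a (possibly empty) proper prefix of some $\varphi^k(b)$. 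Moreover $|p|+|s|\equiv n\pmod{2^k}$. Because $\sim_k$ is a congruence, by \cref{lem:transfer} and $\varphi^k(0)\sim_k\varphi^k(1)$ the class of $u$ depends only on $p$, $s$ and $|z|$. Since the middle block is complement-invariant modulo $\sim_k$, the class is fixed iff $p\,\varphi^k(z)\,s\sim_k \overline{p}\,\varphi^k(z')\,\overline{s}$, equivalently iff $\delta_m(u)=0$ for $m\le k$ together with the other coordinates of $\Psi_k$.

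\emph{Case $2^k\mid n$.} Fixed classes exist: take $u=\varphi^k(z)$, a factor of length $n$, which is fixed as noted before \cref{lem:delta_prefix}. For uniqueness I will use the parity of \cref{thm:lejeune}. The involution has $\bc{\infw{t}}{k}(n)=3\cdot2^k-3$ classes, an odd number, so the number of fixed classes is odd. It then suffices to show that at most one class is fixed, i.e.\ that any fixed $u$ satisfies $u\sim_k\varphi^k(z)$. Here $|p|+|s|\in\{0,2^k\}$. If $p$ and $s$ are nonempty with $|p|+|s|=2^k$, I will show $\delta_m(u)\neq0$ for some $m\le k$. If $\nu_2(|p|)<k-1$, apply \cref{lem:delta_prefix} to $p$ and $s$ (both have the same valuation $\nu=\nu_2(|p|)$) together with the convolution \eqref{eq:identity_deltam} to compute $\delta_{1+\nu}(u)$. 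The lower $\delta$'s vanish, so only the terms $\delta_{1+\nu}(p)\cdot2$, $\delta_{1+\nu}(s)\cdot 2$ and the middle term survive; the middle term is zero. I then need a sign argument showing $\delta_{1+\nu}(p)+\delta_{1+\nu}(s)\neq0$, or else that the sum vanishing forces $p\,s$-type cancellation giving the $\varphi^k$ class. The boundary case $|p|=|s|=2^{k-1}$ is exactly \cref{lem:new5} with $\ell=k-1$: it yields $\delta_k\neq0$ when $p,s$ come from the configuration $\varphi^{k-1}(a)\ldots\varphi^{k-1}(\overline a)$. In the other configuration, $\varphi^{k-1}(a)\varphi^{k}(z)\varphi^{k-1}(a)$, the factor is $k$-equivalent by \cref{lem:transfer} to $\varphi^{k-1}(aa)\varphi^k(z)$, and I will compute $\delta_k$ as $\pm2\cdot2^{\binom{k-1}{2}}\neq0$.

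\emph{Case $2^k\nmid n$.} Then $|p|+|s|\not\equiv0\pmod{2^k}$, so $\nu:=\nu_2(|p|+|s|)<k$ and at least one of $p,s$ is nonempty. I will show that $\delta_{m}(u)\neq0$ for some $m\le k$, using the recursive structure exactly as in the proof of \cref{lem:delta_prefix}. If $\nu_2(|p|)\neq\nu_2(|s|)$, the smaller valuation $\mu$ gives a nonzero $\delta_{1+\mu}$ from only one side, while all lower ones vanish. If the valuations are equal to $\mu$, reduce by applying $\varphi^{\mu}$-desubstitution: write $p,s$ as $\varphi^{\mu}$-images padded by $\varphi^{\mu}(c)$ blocks, and use \cref{lem:new5} with $\ell=\mu+1$, or the analogous computation, to get $\delta_{\mu+2}\neq0$ with $\mu+2\le k$. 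The latter holds since $|p|+|s|$ has valuation $>\mu$ yet $<k$.

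The main obstacle is the sign/cancellation bookkeeping when $p$ and $s$ have equal $2$-adic valuation, where the contributions of \cref{lem:delta_prefix} could cancel. I would first try to reduce this case by desubstitution to \cref{lem:new5}, which is designed exactly for the configuration $\varphi^{\ell-1}(a)\varphi^\ell(v)\varphi^{\ell-1}(\overline a)$.
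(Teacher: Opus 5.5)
\textbf{There is a genuine gap in the case $2^k\mid n$.} Your reduction to fixed points of an involution, and your treatment of $2^k\nmid n$, follow the paper's ideas. That treatment takes the first non-zero $\delta$ at level $1+\min\{\nu_2(|p|),\nu_2(|s|)\}$, then uses desubstitution and \cref{lem:new5} when the valuations agree. The case $2^k\mid n$, however, goes wrong in two places.

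\textbf{The boundary configuration.} You have the two configurations reversed. \cref{lem:new5} with $\ell=k-1$ concerns $\varphi^{k-2}(a)\varphi^{k-1}(v)\varphi^{k-2}(\overline{a})$, not $\varphi^{k-1}(a)\varphi^{k}(z)\varphi^{k-1}(\overline{a})$. The latter lies in a \emph{fixed} class: by \cref{lem:transfer} and the congruence property, $\varphi^{k-1}(a)\varphi^{k}(z)\varphi^{k-1}(\overline{a})\sim_k\varphi^{k-1}(a)\varphi^{k-1}(\overline{a})\varphi^{k}(z)=\varphi^k(az)$. For example, with $k=2$ the factor $01\,0110\,10$ (inside $\varphi^2(101)$) has $\delta_1=\delta_2=0$ and is $2$-binomially equivalent to its complement. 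So your announced claim ``$\delta_m(u)\neq0$ for some $m\le k$ whenever $p,s\neq\varepsilon$'' is false. What uniqueness actually requires here is to identify this class with the class of $\varphi^k(w)$, and your plan never does this. Only $\varphi^{k-1}(a)\varphi^{k}(z)\varphi^{k-1}(a)$ has to be excluded, and your computation $\delta_k=\pm2\cdot 2^{\binom{k-1}{2}}$ does handle that configuration.

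\textbf{The equal-valuation sub-case.} The sub-case $\nu:=\nu_2(|p|)=\nu_2(|s|)<k-1$ is not settled either. No sign argument can give $\delta_{1+\nu}(p)+\delta_{1+\nu}(s)\neq 0$, and a vanishing sum does not lead to the $\varphi^k$ class. Take $k=2$ and the factor $0011$ inside $\varphi^2(11)=10011001$. It splits as $p=001$, a suffix of $\varphi^2(1)$, and $s=1$, a prefix of $\varphi^2(1)$. Here $\delta_1(p)+\delta_1(s)=0$, yet $\delta_2(0011)=4$ and $0011\not\sim_2 0110$.

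The missing step is the desubstitution you already sketch for $2^k\nmid n$; it does not depend on $n$. Write $u=\varphi^{\nu}(a'\varphi(v)b')$.
\begin{itemize}
\item If $b'=a'$, then $\delta_{\nu+1}(u)=2^{\binom{\nu}{2}}\delta_1(a'\varphi(v)a')\neq 0$. Note that it is $\delta_{\nu+1}$, not $\delta_{\nu+2}$, that is non-zero here; this also applies in your case $2^k\nmid n$.
\item If $b'=\overline{a'}$, then \cref{lem:new5} gives $\delta_{\nu+2}(u)\neq 0$ with $\nu+2\le k$.
\end{itemize}

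This is why the paper does not split according to $n$. It shows once that a fixed class forces $p=s=\varepsilon$ or $(p,s)=(\varphi^{k-1}(a),\varphi^{k-1}(\overline{a}))$, using \cref{pro:equivTM} to exclude $b'=a'$. It then observes that both possibilities lie in the class of $\varphi^k(w)$. Finally, your parity argument via \cref{thm:lejeune} only re-proves existence, which you already had; it does nothing for uniqueness.
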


\begin{proof}
  Let $u,v\in\{0,1\}^*$. It is straightforward to see that $u\sim_k v$ if and only if $\overline{u}\sim_k\overline v$. Thus, the bit-wise complement maps every word of a $k$-binomial equivalence class~$[u]_k$ to a word of the class~$[\overline{u}]_k$ and conversely, or it fixes a class. Hence bit-wise complement permutes the $k$-binomial classes.
 
 By recognizability of the $2$-uniform Thue--Morse morphism~$\varphi$, any factor~$u$ of length $n\ge 2^k$ occurring in $\infw{t}$ is of the form
 \[
   u=s\, \varphi^k(z)\, p
 \]
 where $s$ (resp. $p$) is a proper suffix (resp. prefix) of the image of a letter by $\varphi^k$, and $z$ may be empty.

 Let us show that  $[u]_k$ is fixed by bit-wise complement if and only if $s=p=\varepsilon$ or, $s=\varphi^{k-1}(a)$ and $p=\varphi^{k-1}(\overline{a})$ with $a\in\{0,1\}$. Note that, in that case $|u|=|ps|+|z|2^k$ is divisible by $2^k$.

The condition is sufficient. Assume $s=p=\varepsilon$. Since $\sim_k$ is a congruence, by Ochsenschl\"ager's result \eqref{eq:Oschenschlager}, we know that, for all words $z\in\{0,1\}^+$,  
   \[
     \varphi^k(z)\sim_k \varphi^k(\overline{z})
   \]
   and then the class $[\varphi^k(z)]_k$ is fixed by complement.

Assume $s=\varphi^{k-1}(a)$ and $p=\varphi^{k-1}(\overline{a})$ with $a\in\{0,1\}$. We use the transfer lemma,
   \begin{align*}
     \varphi^{k-1}(a)\varphi^k(z)\varphi^{k-1}(\overline{a})& \sim_k
     \varphi^{k-1}(a)\varphi^{k-1}(\overline{a})\varphi^k(z)\sim_k
                                                              \varphi^k(az)\sim_k\varphi^k(\overline{az})\\
     & \sim_k
     \varphi^{k-1}(\overline{a})\varphi^{k-1}(a)\varphi^k(\overline{z})\sim_k
     \varphi^{k-1}(\overline{a})\varphi^k(\overline{z})\varphi^{k-1}(a)
   \end{align*}
   and then the class $[\varphi^{k-1}(a)\varphi^k(z)\varphi^{k-1}(\overline{a})]_k$ is fixed by complement. Note that this class contains any word of the form $\varphi^k(w)$ where $w$ is a factor of $\infw{t}$ of length $|z|+1$. This is thus the unique class fixed by bit-wise complement.

The condition is necessary. Assume now that $[u]_k=[\overline{u}]_k$ and $|sp|>0$. We have to show that this implies $s=\varphi^{k-1}(a)$ and $p=\varphi^{k-1}(\overline{a})$ with $a\in\{0,1\}$.

\begin{itemize}
\item[a)]
  If only one of $p$ or $s$ is non-empty --- w.l.o.g. assume $p\neq\varepsilon$ --- let $R=1+\nu_2(|p|)$. A computation similar to the one from the proof of \cref{lem:delta_prefix} shows that
\[
  \delta_R(u)=\delta_R(p)\neq 0
\]
which, by \eqref{eq:class_fixed}, means that $[u]_k\neq [\overline{u}]_k$. A contradiction.

\item[b)] Now assume that both $s$ and $p$ are non-empty and set $\ell:=1+\min\{\nu_2(|s|),\nu_2(|p|)\}$. In particular, at least one of the two quantities $\delta_\ell(s)$ or $\delta_\ell(p)$ is non-zero and $\delta_i(s)=\delta_i(p)=0$ for all $i<\ell$. 

  We have, applying the convolution formula \eqref{eq:identity_deltam}, 
  \begin{equation}\label{eq:delta_ell}
\delta_\ell(s\varphi^k(z))= \frac12 \sum_{i=0}^\ell \delta_i(s) \sigma_{\ell-i}(\varphi^k(z)) 
  + \frac12 \sum_{i=0}^\ell (-1)^i \sigma_i(s) \delta_{\ell-i}(\varphi^k(z))
\end{equation}
and by definition of $\ell$, $\delta_i(s)=0$ for $i<\ell$. Note that $\sigma_0(w)=2$, for any word $w$, and $\delta_i(\varphi^k(z))=0$ for all $i\le k$ because $\varphi^k(z)\sim_k
\varphi^k(\overline{z})$. Hence, we conclude that
 \[
   \delta_\ell(s\varphi^k(z))=\delta_\ell(s).
 \]
 Moreover $\delta_i(s\varphi^k(z))=0$ for $i<\ell$.
We are now in a position to compute
   \[
\delta_\ell(s\varphi^k(z)p)= \frac12 \sum_{i=0}^\ell \delta_i(s\varphi^k(z)) \sigma_{\ell-i}(p) 
  + \frac12 \sum_{i=0}^\ell (-1)^i \sigma_i(s\varphi^k(z)) \delta_{\ell-i}(p).
\]
The first term is equal to $\delta_\ell(s)$ thanks to the above discussion. By definition of $\ell$, $\delta_i(p)=0$ for $i<\ell$. Hence the second term is $\delta_\ell(p)$ and
\[
  \delta_\ell(u)=\delta_\ell(s)+\delta_\ell(p).
\]
Since $[u]_k=[\overline{u}]_k$, we have $\delta_\ell(u)=0$ hence $\delta_\ell(s)=-\delta_\ell(p)\neq 0$. Since $\delta_i(s)=\delta_i(p)=0$ for all $i<\ell$ and $\delta_\ell(s)=-\delta_\ell(p)\neq 0$, \cref{lem:delta_prefix} implies
\begin{equation}\label{eq:13}
\nu_2(|s|)=\nu_2(|p|)=\ell-1.
\end{equation}
We claim that \(\ell=k\).
Assume to the contrary that \(\ell<k\). Since \(2^{\ell-1}\) divides both
\(|s|\) and \(|p|\), and \(s\) (resp. \(p\)) is a suffix (resp. prefix) of a
\(\varphi^k\)-block, there exist a suffix \(s'\) and a prefix \(p'\) of
\(\varphi^{k-\ell+1}\)-blocks such that
\[
s=\varphi^{\ell-1}(s')
\quad\text{ and }\quad
p=\varphi^{\ell-1}(p').
\]Moreover, by \eqref{eq:13}, both \(|s'|\) and \(|p'|\) are odd. Thus
\[
u=\varphi^{\ell-1}(u')
\quad\text{where}\quad
u'=s'\varphi^{k-\ell+1}(z)p'.
\]Since \(u\sim_k\bar u\), in particular \(u\sim_\ell\bar u\). If
\(\ell>1\), Proposition~\ref{pro:equivTM}, applied with exponent $\ell-1$, therefore gives
\(
u'\sim_1\bar u'
\); for \(\ell=1\) the same conclusion is immediate.
Since \(s'\) is an odd-length suffix of a \(\varphi\)-image and \(p'\) is an
odd-length prefix of a \(\varphi\)-image, there exist
\(a,b\in\{0,1\}\) and words \(x,y\) such that
\[
s'=a\varphi(x)
\quad\text{ and }\quad
p'=\varphi(y)b.
\]
Consequently
\[
u'
=
a\varphi(v)b
\quad\text{where}\quad
v=x\varphi^{k-\ell}(z)y.
\]
As \(\varphi(v)\) contains equally many \(0\)'s and \(1\)'s and
\(u'\sim_1\bar u'\), we must have \(b=\bar a\). Therefore
\[
u=
\varphi^{\ell-1}(a)
\varphi^\ell(v)
\varphi^{\ell-1}(\bar a).
\]
Lemma~\ref{lem:new5} now gives
\[
\delta_{\ell+1}(u)\neq0.
\]
But \(\ell<k\), so \(\ell+1\le k\), contradicting \eqref{eq:class_fixed}. Hence $k=\ell$.

Now, we can make use of Lemma~\ref{lem:delta_prefix}: \(
\nu_2(|s|)=\nu_2(|p|)=k-1
\). Since
\(
0<|s|,|p|<2^k,
\)
this forces
\(|s|=|p|=2^{k-1}\). Otherwise stated, \(
s=\varphi^{k-1}(a)\) and \(p=\varphi^{k-1}(b)\)
for some \(a,b\in\{0,1\}\). It remains only to show \(b=\overline{a}\). Assume towards a contradiction that \(b=a\), then
\[
u
=
\varphi^{k-1}\!\left(a\varphi(z)a\right).
\]
But \(\varphi(z)\) is balanced, so
\(
a\varphi(z)a\not\sim_1
\bar a\,\varphi(\bar z)\,\bar a
\).
For \(k\ge2\), Proposition~\ref{pro:equivTM} gives
\(
u\not\sim_k\bar u
\), a contradiction. For \(k=1\), the same conclusion follows directly from abelian equivalence. Hence \(b=\bar a\) and the conclusion follows.


\end{itemize}
If $n=q2^k$, any factor $w$ of $\infw{t}$ of length $q$ gives a factor
$\varphi^k(w)$ of length $n$ whose class is fixed. Together with the above
necessity argument and the coincidence of the classes in the two sufficient
cases, this proves the stated existence and uniqueness. \qed
\end{proof}

\begin{remark}\label{rem:small_m}
  From \cref{thm:lejeune}, we know that no two distinct factors of length~$n<2^k$ are $\sim_k$-equivalent. In particular, all classes are singletons and $u\not\sim_k\overline{u}$ if $0<|u|<2^k$. 
\end{remark}

\section{The case of the two-dimensional Thue--Morse word}

We start with some immediate observations.
\subsection{Some special situations}
If we consider two arrays whose columns are all $k$-binomially equivalent, then these arrays are $(k,\ell)$-equivalent.
\begin{lemma}\label{lem:kkequiv}
  Let $k,\ell\ge 1$. 
  Let $M,N\in \mathcal{A}^{r\times m}$ be arrays such that, for all $i,j\in\{1,\ldots,m\}$, $M_i\sim_k N_j$. Then $M\sim_{k,\ell}N$.
\end{lemma}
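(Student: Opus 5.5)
The plan is to compare the two binomial coefficients directly from \cref{def:main}, using only the fact that all columns involved are pairwise $k$-binomially equivalent. Fix $k'\le k$ and $\ell'\le\ell$, and let $P=\begin{pmatrix}P_1&\cdots&P_{\ell'}\end{pmatrix}$ be an arbitrary array with $k'$ rows and $\ell'$ columns. By definition,
\[
  \binom{M}{P}=\sum_{1\le j_1<\cdots<j_{\ell'}\le m}\prod_{t=1}^{\ell'}\binom{M_{j_t}}{P_t},
  \qquad
  \binom{N}{P}=\sum_{1\le j_1<\cdots<j_{\ell'}\le m}\prod_{t=1}^{\ell'}\binom{N_{j_t}}{P_t}.
\]
Each column $P_t$ is a word of length $k'\le k$. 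The hypothesis $M_{j}\sim_k N_{j}$, taking $i=j$, gives $\binom{M_{j}}{P_t}=\binom{N_{j}}{P_t}$ for every index $j$ and every $t$. So the two sums agree term by term, and hence $\binom{M}{P}=\binom{N}{P}$. Since $P$ was an arbitrary array of size at most $k\times\ell$, we get $M\sim_{k,\ell}N$.

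Note that this only uses the diagonal case $i=j$ of the hypothesis. The stronger assumption, that all columns of $M$ and $N$ are mutually equivalent, is not needed for this implication. It will matter later, for example when we compare arrays whose columns are permuted or complemented.

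There is essentially no obstacle. The only point to check is the trivial edge case: if $P$ has more rows than $r$ or more columns than $m$, both coefficients vanish by convention, since $M$ and $N$ have the same dimensions. The empty array gives coefficient $1$ on both sides.
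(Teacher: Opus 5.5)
Your proof is correct, but it takes a different route from the paper's.

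\textbf{The paper's route.} It first replaces $k,\ell$ by $\min(k,r),\min(\ell,m)$. It then relies on \cref{pro:limitkl}, and hence on the Manvel-type \cref{cor:manvel}, to test only patterns $P$ of exact size $k\times\ell$. Next it uses the full all-pairs hypothesis to get $M_1\sim_k M_j$ for every $j$. The defining sum therefore collapses to
$\binom{M}{P}=\binom{m}{\ell}\prod_{i}\binom{M_1}{P_i}$, and the conclusion follows from $M_1\sim_k N_1$.

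\textbf{Your route.} You check every size $k'\times\ell'$ with $k'\le k$, $\ell'\le\ell$ directly. This is legitimate because word $k$-binomial equivalence already controls all subwords of length at most $k$, so no Manvel reduction is needed. You then compare the two sums term by term using only the diagonal equivalences $M_j\sim_k N_j$. This is essentially the argument the paper gives for the next result, \cref{lem:colum-k-equiv}, of which the present lemma is a special case. The only difference is that the paper again restricts there to $k\times\ell$ patterns, while you check all sizes directly.

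\textbf{What each buys.} Your version is more elementary and proves the more general statement. The paper's version yields an explicit closed form: $\binom{M}{P}$ depends only on $m$, $P$ and the class $[M_1]_k$. The same collapse reappears in Case~2 of the proof of \cref{thm:mainTM}. There the columns of $s\oplus v$ are $s$ or $\overline{s}$, and the coefficient becomes $\binom{n}{\ell}\prod_j\binom{s}{P_j}$, independent of $v$.
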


\begin{proof}
  Replacing $k,\ell$ by $\min(k,r),\min(\ell,m)$ does not change the array equivalence to be proved, so we may assume $k\le r$ and $\ell\le m$.
  Let $P\in \mathcal{A}^{k\times \ell}$. Since $M_1\sim_k M_j$, for all $j$, and $P_i$ is a column of length~$k$, we get
  \[\binom{M_1}{P_i}=\binom{M_j}{P_i}
    \quad \forall i\in\{1,\ldots,\ell\}, \forall j\in\{1,\ldots,m\}.
  \]
  Hence, we obtain
  \[
    \binom{M}{P}=\sum_{1\le j_1< \cdots <j_\ell\le m}\binom{M_{1}}{P_1}\cdots\binom{M_{1}}{P_\ell}=\binom{m}{\ell}\binom{M_{1}}{P_1}\cdots\binom{M_{1}}{P_\ell}.
  \]
  Since $M_1\sim_k N_1$, we deduce that $\binom{M}{P}=\binom{N}{P}$ which is enough. \qed
\end{proof}

We can generalize a bit the above observation with the assumption that corresponding columns are pairwise $k$-binomially equivalent.
\begin{lemma}\label{lem:colum-k-equiv}
  Let $k,\ell\ge 1$. 
  Let $M,N\in \mathcal{A}^{r\times m}$ be arrays such that corresponding columns are pairwise $k$-binomially equivalent, i.e., for all $i$, $M_i\sim_kN_i$. Then $M\sim_{k,\ell}N$.
\end{lemma}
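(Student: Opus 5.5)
We argue directly from \cref{def:main}, following the computation in the proof of \cref{lem:kkequiv}. First we reduce to the case $k\le r$ and $\ell\le m$ by replacing $k,\ell$ with $\min(k,r),\min(\ell,m)$. This changes nothing: an array with $r$ rows has no subarray with more than $r$ rows, and one with $m$ columns has no subarray with more than $m$ columns. After this reduction, \cref{pro:limitkl} tells us it is enough to check $\binom{M}{P}=\binom{N}{P}$ for every $P\in\mathcal{A}^{k\times\ell}$.

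Fix such a $P=\begin{pmatrix}P_1&\cdots&P_\ell\end{pmatrix}$. Each column $P_t$ is a word of length $k$. Since $M_j\sim_k N_j$ for every $j$, we have
\[
  \binom{M_j}{P_t}=\binom{N_j}{P_t}\quad\text{for all } j\in\{1,\ldots,m\},\ t\in\{1,\ldots,\ell\}.
\]
Now expand both coefficients using \cref{def:main}:
\[
  \binom{M}{P}=\sum_{1\le j_1<\cdots<j_\ell\le m}\prod_{t=1}^{\ell}\binom{M_{j_t}}{P_t}.
\]
The expansion of $\binom{N}{P}$ is the same sum over the same index tuples, with $N$ in place of $M$. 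In each product, every factor $\binom{M_{j_t}}{P_t}$ equals $\binom{N_{j_t}}{P_t}$, because the column index $j_t$ is the same on both sides. So the two sums agree term by term, and $M\sim_{k,\ell}N$.

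An alternative route goes through the Magnus transform. By \cref{pro:magnus1D}, $\mu_\ominus(M_j)$ and $\mu_\ominus(N_j)$ have the same coefficients on all columns of height at most $k$. Multiplying these transforms with $\obar$ then shows that $\mu(M)$ and $\mu(N)$ agree on every array with at most $k$ rows, which is exactly the equality of the full spectra $\Sp_{\le k\times\ell}$. This also handles the smaller sizes $k'\le k$ without appealing to \cref{pro:limitkl}.

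No step is a real obstacle. The one point needing care is the reduction to $k\le r$, $\ell\le m$, which is what allows \cref{pro:limitkl} to be applied; the Magnus-transform route avoids it altogether.
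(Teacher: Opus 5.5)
Your proof is correct and is essentially the paper's argument: reduce to $k\le r$, $\ell\le m$, invoke \cref{pro:limitkl} to restrict to $k\times\ell$ patterns, and compare the two expansions from \cref{def:main} term by term, since each factor $\binom{M_{j_t}}{P_t}$ equals $\binom{N_{j_t}}{P_t}$. Your Magnus-transform remark is a valid shortcut that makes the appeal to \cref{pro:limitkl} unnecessary. Equivalently, the same term-by-term comparison works directly for every $k'\times\ell'$ pattern with $k'\le k$.
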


\begin{proof}
  As in the previous proof, we may assume $k\le r$ and $\ell\le m$.
  For $P\in\mathcal{A}^{k\times\ell}$, the result is a direct consequence of the definition of the two-dimensional coefficient
   \[
     \sum_{1\le j_1< \cdots <j_\ell\le m}\binom{M_{j_1}}{P_1}\cdots\binom{M_{j_\ell}}{P_\ell}=
     \sum_{1\le j_1< \cdots <j_\ell\le m}\binom{N_{j_1}}{P_1}\cdots\binom{N_{j_\ell}}{P_\ell}
   \]
   because the $P_i$'s are columns of length~$k$. 
  \qed
\end{proof}

\begin{corollary}
  Let $k,\ell\ge 1$. 
  Let $M,N\in \mathcal{A}^{r\times m}$ be arrays such that corresponding columns are pairwise $k$-binomially equivalent, i.e., for all $i$, $M_i\sim_kN_i$. Let $P\in \mathcal{A}^{s\times m}$ and $Q\in \mathcal{A}^{t\times m}$. Then, 
  $P\ominus M \ominus Q\sim_{k,\ell} P\ominus N \ominus Q$.
\end{corollary}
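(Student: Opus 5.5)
The plan is to reduce the statement to \cref{lem:colum-k-equiv} by observing that vertical concatenation acts columnwise and that $\sim_k$ is a congruence for concatenation of words. The columns of $P\ominus M\ominus Q$ are the words $P_iM_iQ_i$ for $i\in\{1,\ldots,m\}$, where $P_i$, $M_i$, $Q_i$ denote the $i$-th columns of $P$, $M$, $Q$, read top to bottom and interpreted as words. Similarly, the columns of $P\ominus N\ominus Q$ are the words $P_iN_iQ_i$. Both arrays have the same dimension $(s+r+t)\times m$, so their $(k,\ell)$-binomial equivalence is meaningful.

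First, by hypothesis $M_i\sim_k N_i$ for every $i$. Trivially $P_i\sim_k P_i$ and $Q_i\sim_k Q_i$. Since $k$-binomial equivalence is a congruence, as recalled in the one-dimensional recap of \cref{sec:binom}, applying it twice gives
\[
P_iM_iQ_i\sim_k P_iN_iQ_i \quad\text{for all } i\in\{1,\ldots,m\}.
\]
If one prefers not to quote the congruence property, it can be checked directly. The Chu--Vandermonde identity for words expresses $\binom{xyz}{w}$, for $|w|\le k$, as a sum of products $\binom{x}{w_1}\binom{y}{w_2}\binom{z}{w_3}$ over factorizations $w=w_1w_2w_3$. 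Every $w_2$ appearing there has length at most $k$, so each term is unchanged when $y=M_i$ is replaced by $N_i$.

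Second, the two arrays now satisfy the hypothesis of \cref{lem:colum-k-equiv}: corresponding columns are pairwise $k$-binomially equivalent. That lemma then gives $P\ominus M\ominus Q\sim_{k,\ell}P\ominus N\ominus Q$ directly.

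There is no real obstacle here. The only points needing care are bookkeeping: the vertical concatenation requires $P$, $M$, $N$, $Q$ to share the same number $m$ of columns, and the degenerate cases where $P$ or $Q$ is empty (zero rows) are covered by the same argument with the empty word. The reduction to $k\le s+r+t$, $\ell\le m$ is already handled inside the proof of \cref{lem:colum-k-equiv}.
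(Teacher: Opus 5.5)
Your proof is correct and follows the paper's argument exactly. The congruence property of $\sim_k$ gives $P_iM_iQ_i\sim_k P_iN_iQ_i$ column by column, and \cref{lem:colum-k-equiv} then yields $P\ominus M\ominus Q\sim_{k,\ell}P\ominus N\ominus Q$; your optional Chu--Vandermonde check of the congruence step is a harmless extra.
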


\begin{proof}
  Since $\sim_k$ is a congruence, $M_i\sim_kN_i$ implies that $P_i\ominus M_i \ominus Q_i\sim_{k} P_i\ominus N_i \ominus Q_i$. Hence we may apply \cref{lem:colum-k-equiv}. \qed
\end{proof}

We can also consider horizontal concatenation and a similar result holds.

\begin{lemma}
  Let $k,\ell\ge 1$. 
  Let $M,N\in \mathcal{A}^{r\times m}$ be arrays such that corresponding columns are pairwise $k$-binomially equivalent, i.e., for all $i$, $M_i\sim_kN_i$. Let $P\in \mathcal{A}^{r\times s}$ and $Q\in \mathcal{A}^{r\times t}$. Then, 
  $P\obar M \obar Q\sim_{k,\ell} P\obar N \obar Q$.
\end{lemma}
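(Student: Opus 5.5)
The plan is to reduce the statement directly to \cref{lem:colum-k-equiv}, exactly as in the preceding corollary. Horizontal concatenation simply juxtaposes columns: the arrays $P\obar M\obar Q$ and $P\obar N\obar Q$ both lie in $\mathcal{A}^{r\times(s+m+t)}$. Their $i$-th columns are as follows. For $1\le i\le s$, both are $P_i$. For $s< i\le s+m$, they are $M_{i-s}$ and $N_{i-s}$ respectively. For $s+m<i\le s+m+t$, both are $Q_{i-s-m}$.

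I will check the hypothesis of \cref{lem:colum-k-equiv} column by column. In the first and last blocks the corresponding columns are identical, so they are trivially $k$-binomially equivalent because $\sim_k$ is reflexive. In the middle block, $M_{i-s}\sim_k N_{i-s}$ holds by assumption. Hence every pair of corresponding columns is $k$-binomially equivalent, and \cref{lem:colum-k-equiv} yields $P\obar M\obar Q\sim_{k,\ell}P\obar N\obar Q$.

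Two degenerate cases need a word. If $s=0$ or $t=0$, the empty array is neutral for $\obar$, so the argument is unchanged. Any reduction to $k\le r$ and $\ell\le s+m+t$ is already handled inside the proof of \cref{lem:colum-k-equiv}.

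There is no real obstacle here. The only point requiring care is the bookkeeping of column indices after concatenation, which is immediate from the definition of $\obar$. One could alternatively argue via \cref{pro:vandermonde}, expanding $\binom{P\obar M\obar Q}{W}$ as $\sum\binom{P}{W_1}\binom{M}{W_2}\binom{Q}{W_3}$ and using $M\sim_{k,\ell}N$ from \cref{lem:colum-k-equiv}. However, the direct column argument is shorter and is the one I would write.
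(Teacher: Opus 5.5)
Your proof is correct, but it does not follow the paper's route. The paper's proof is a one-line appeal to the Chu--Vandermonde identity (\cref{pro:vandermonde}), which is the alternative you mention at the end. One expands $\binom{P\obar M\obar Q}{W}$ as a sum of products $\binom{P}{W_1}\binom{M}{W_2}\binom{Q}{W_3}$ over the factorizations $W=W_1\obar W_2\obar W_3$. Every $W_2$ is empty or has at most $k$ rows and at most $\ell$ columns, so $M\sim_{k,\ell}N$ (given by \cref{lem:colum-k-equiv}) makes the two sums agree term by term.

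Your main argument bypasses this identity. You observe that column-wise $\sim_k$-equivalence survives horizontal concatenation with identical blocks. The statement is then literally \cref{lem:colum-k-equiv} applied to the two $r\times(s+m+t)$ arrays, mirroring how the paper handles the $\ominus$ version in the preceding corollary. This is shorter and shows the lemma is redundant given \cref{lem:colum-k-equiv}.

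What the Vandermonde route buys is generality. It uses only $M\sim_{k,\ell}N$, and with $P\sim_{k,\ell}P'$ and $Q\sim_{k,\ell}Q'$ in place of identical contexts it shows that $\sim_{k,\ell}$ is a congruence for $\obar$. Your column-by-column argument cannot yield that, because $\sim_{k,\ell}$-equivalent arrays need not have $\sim_k$-equivalent corresponding columns. For instance, the one-row arrays $\begin{pmatrix}0&1\end{pmatrix}$ and $\begin{pmatrix}1&0\end{pmatrix}$ are $\sim_{1,1}$-equivalent.
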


\begin{proof}
  This is a consequence of \cref{pro:vandermonde}.
\end{proof}

Ochsenschl\"ager's result extends to a two-dimensional setting.
\begin{corollary}
  Let $\Phi$ be the two-dimensional Thue--Morse morphism defined in \eqref{eq:Psi}. For all $k,\ell\ge 1$, 
  we have $\Phi^k(0)\sim_{k,\ell}\Phi^k(1)$.
\end{corollary}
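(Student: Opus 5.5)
We want $\Phi^k(0)\sim_{k,\ell}\Phi^k(1)$ for all $k,\ell\ge 1$. The plan is to compute the columns of $\Phi^k(0)$ and $\Phi^k(1)$ explicitly and then invoke \cref{lem:kkequiv}.

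First, we identify $\Phi^k(a)$ with the coding of a direct product, as in the construction of $\infw{T}$. Since $\Phi(a)$ is the $2\times 2$ array with entry $(i,j)$ equal to $a+x_i+x_j \pmod 2$, where $x=01=\varphi(0)$, induction on $k$ gives
\[
  \Phi^k(a)[i,j]=a+t_i+t_j \pmod 2,
\]
where $t=\varphi^k(0)=t_1\cdots t_{2^k}$. The induction step uses $\varphi^{k+1}(0)=\varphi^k(0)\varphi^k(1)=\varphi(\varphi^k(0))$ and the blockwise description of $\Phi^{k+1}=\Phi\circ\Phi^k$: the letter at position $(i,j)$ is replaced by a $2\times2$ block, and the Thue--Morse indices add bitwise.

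Consequently, each column $j$ of $\Phi^k(a)$ is either $\varphi^k(0)$ or $\varphi^k(1)=\overline{\varphi^k(0)}$, according to the parity $a+t_j$. The same holds for every column of $\Phi^k(1)$.

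By \eqref{eq:Oschenschlager}, $\varphi^k(0)\sim_k\varphi^k(1)$. Therefore all columns of $\Phi^k(0)$ and all columns of $\Phi^k(1)$ are pairwise $k$-binomially equivalent. Both arrays have size $2^k\times 2^k$, so \cref{lem:kkequiv} applies directly and yields $\Phi^k(0)\sim_{k,\ell}\Phi^k(1)$ for every $\ell\ge1$. Alternatively, the $j$th column of $\Phi^k(1)$ is the complement of the $j$th column of $\Phi^k(0)$, so \cref{lem:colum-k-equiv} also applies.

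The only step requiring care is the explicit description of $\Phi^k(a)$ as $a+t_i+t_j$. I would verify it by noting that the Thue--Morse letter $t_n$ is the parity of the binary digit sum of $n-1$. Iterating $\Phi$ then appends one binary digit to each of the row and column indices, and each appended digit contributes its bit to the parity. The induction therefore closes.
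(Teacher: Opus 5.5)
Your proof is correct and follows the paper's argument. The paper likewise observes that every column of $\Phi^k(a)$ is $\varphi^k(0)$ or $\varphi^k(1)$ and concludes with \cref{lem:kkequiv} and \eqref{eq:Oschenschlager}. Your formula $\Phi^k(a)[i,j]=a+t_i+t_j \bmod 2$ just makes explicit the column description that the paper states without justification.
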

\begin{proof}
  Each column of $\Phi^k(a)$, $a\in\{0,1\}$, is either $\varphi^k(0)$ or $\varphi^k(1)$. The result then follows from \cref{lem:kkequiv} and \eqref{eq:Oschenschlager}. \qed
\end{proof}

\begin{example}
  As an example, the following factors are $\sim_{2,2}$-equivalent:
  \[
    \Phi^2(0)=\begin{pmatrix}
      0&1&1&0\\
      1&0&0&1\\
      1&0&0&1\\
      0&1&1&0\\
    \end{pmatrix}\text{ and }
        \Phi^2(1)=\begin{pmatrix}
      1&0&0&1\\
      0&1&1&0\\
      0&1&1&0\\
      1&0&0&1\\
    \end{pmatrix}.\]
\end{example}

\subsection{Binomial complexity of $2$D Thue--Morse word}

It is convenient to define the direct product of two words followed by a coding which is addition modulo~$2$.

\begin{definition}
  Let $u,v$ be two finite words over~$\{0,1\}$, we let $W=u\oplus v$ denote the array where $W[i,j]=u_i+v_j\bmod{2}$. Otherwise stated, we apply the coding $(a,b)\mapsto a+b\bmod{2}$ to $u\times v$.
\end{definition}

If $\mathsf{p}_\infw{t}(n)=\#\Fac_n(\infw{t})$ is the factor complexity of the Thue--Morse word, then
\[
  \#\Fac_{n\times n}(\infw{T})=\frac{\left(\mathsf{p}_\infw{t}(n)\right)^2}{2}=2, 8, 18, 50, 72, 128, 200, 242, 288, \ldots
\]
and more generally, we have the following.
\begin{proposition} The factor complexity of the two-dimensional Thue--Morse word is given by
\[
  \#\Fac_{m\times n}(\infw{T})=\frac{\mathsf{p}_\infw{t}(m).\mathsf{p}_\infw{t}(n)}{2}.
\]  
\end{proposition}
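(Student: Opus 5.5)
The starting point is that $\infw{T}=\infw{t}\oplus\infw{t}$, so every $m\times n$ factor of $\infw{T}$ is of the form $u\oplus v$ with $u\in\Fac_m(\infw{t})$ and $v\in\Fac_n(\infw{t})$: rows are indexed by $u$, columns by $v$. The map
\[
  \Fac_m(\infw{t})\times\Fac_n(\infw{t})\to\Fac_{m\times n}(\infw{T}),\quad (u,v)\mapsto u\oplus v,
\]
is therefore surjective. The plan is to prove that it is exactly $2$-to-$1$, with fibres $\{(u,v),(\overline{u},\overline{v})\}$. Since $\Fac_n(\infw{t})$ is closed under bit-wise complement, these pairs are genuine pairs of factors, and the count $\mathsf{p}_\infw{t}(m)\mathsf{p}_\infw{t}(n)/2$ follows.

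The first step is the inclusion of each pair in a fibre. We have $\overline{u}\oplus\overline{v}=u\oplus v$, because $(1-u_i)+(1-v_j)\equiv u_i+v_j \pmod 2$. Moreover $(u,v)\neq(\overline{u},\overline{v})$ for nonempty words, so every fibre has size at least $2$.

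The second step shows that the fibres contain nothing else. Suppose $u\oplus v=u'\oplus v'$. Comparing row $1$ and row $i$ of both arrays gives $u_i+u_1\equiv u'_i+u'_1$ for all $i$. Hence $u'=u$ if $u'_1=u_1$, and $u'=\overline{u}$ otherwise. Once $u'$ is fixed, the first row forces $v'_j\equiv u_1+v_j-u'_1$. So $v'=v$ in the first case and $v'=\overline{v}$ in the second. Thus each fibre has size exactly $2$.

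Finally, the set $\Fac_m(\infw{t})\times\Fac_n(\infw{t})$, which has $\mathsf{p}_\infw{t}(m)\mathsf{p}_\infw{t}(n)$ elements, is partitioned into fibres of size $2$, which gives the formula.

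The only point needing care is the identification of factors of $\infw{T}$ with arrays $u\oplus v$, that is, $\infw{T}[i,j]=t_i+t_j \bmod 2$. This is the description of $\infw{T}$ as the coding of $\infw{t}\times\infw{t}$ given in the example. For arbitrary positions $(i,j)$, the rows of the factor starting there come from $\infw{t}[i\cdots i+m-1]$ and its columns from $\infw{t}[j\cdots j+n-1]$, and these range over all of $\Fac_m(\infw{t})$ and $\Fac_n(\infw{t})$ independently. There is no real obstacle; this is a direct injectivity-up-to-complement argument.
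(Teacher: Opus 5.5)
Your proof is correct and follows the paper's argument: the map $(u,v)\mapsto u\oplus v$ from $\Fac_m(\infw{t})\times\Fac_n(\infw{t})$ onto $\Fac_{m\times n}(\infw{T})$ is surjective and exactly $2$-to-$1$, with fibres $\{(u,v),(\overline{u},\overline{v})\}$, using that $\Fac(\infw{t})$ is closed under bit-wise complement. Your row-comparison argument for the fibres is a slightly more explicit version of the paper's remark that the first column of $W$ together with $v_1$ determines $u$, and then $u$ determines $v$.
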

  This result is rather elementary and can be obtained in several ways (in particular by an automatic approach using {\tt Walnut}). However, the proof is of interest for the counting arguments that we will develop later.
\begin{proof}
Let $n\ge 1$. Note that any $m\times n$ factor $W=\infw{T}[i\cdots i+m-1,j\cdots j+n-1]$ is obtained from two factors $u=\infw{t}[i\cdots i+m-1]$ and $v=\infw{t}[j\cdots j+n-1]$ by $W=u\oplus v$. 

The map
\[
  \Fac_{m}(\infw{t})\times\Fac_{n}(\infw{t})\to\Fac_{m\times n}(\infw{T}), (u,v)\mapsto u\oplus v
\]
is surjective.

The set of factors of $\infw{t}$ is closed under the bit-wise complement mapping a word $u$ to $\overline{u}$. Observe that $u\oplus v=u'\oplus v'$ if and only if $(u',v')=(u,v)$ or $(u',v')=(\overline{u},\overline{v})$. The conclusion follows.

Equivalently, $W$ and the first letter of $v$ are enough to recover $u$ and $v$. Indeed, the first column of $W$ and $v_1$ determine $u$. Then, $u$ and the $j^{\text{th}}$ column of $W$ determine $v_j$. Starting the process with $\overline{v_1}$ leads to $\overline{u}$ and $\overline{v}$. 
\qed
\end{proof}

It is always tempting to run some computer experiments. Computing the first few values of the $2$ or $3$-binomial complexity leads  to conjecture the following sequences:

\[
  \left( \bc{\infw{T}}{2,2}(n,n)\right)_{n\ge 1}=2, 8, 18, (37, 32, 32, 32)^\omega
\]
and
\[
  \left( \bc{\infw{T}}{3,3}(n,n)\right)_{n\ge 1}= 2, 8, 18, 50, 72, 128, 200, (211, 200, 200, 200, 200, 200, 200, 200)^\omega.
\]
Our aim is to obtain the properties of such sequences.

\begin{lemma}\label{lem:hamming}
Let $k\ge 1$ and $z_1<z_2<\cdots <z_{2^k}$ be the words of $\{0,1\}^k$ genealogically ordered. Consider the matrix
$$M=
\begin{pmatrix}
  \alpha & \beta \\
  \beta & \alpha \\
\end{pmatrix}$$
The $k$-fold Kronecker product $M^{\otimes k}$ is such that
\[
  [M^{\otimes k}]_{i,j}=\alpha^{k-h(z_i,z_j)} \beta^{h(z_i,z_j)}
  \]
  where $h$ is the Hamming distance.
\end{lemma}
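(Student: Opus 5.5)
The proof goes by induction on $k$, using the recursive block structure of the Kronecker product together with the fact that the Hamming distance is additive over coordinates. The first step is to fix the indexing convention. Genealogical order on $\{0,1\}^k$ is lexicographic order on words of fixed length. So the index $i\in\{1,\ldots,2^k\}$ corresponds to the word $z_i$ whose binary expansion (with $0<1$) is $i-1$ written on $k$ bits. Consequently, $z_i=c\,z'_{i'}$, where $c\in\{0,1\}$ is the first letter and $z'_{i'}\in\{0,1\}^{k-1}$, with $i=c\,2^{k-1}+i'$.

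For the base case $k=1$, the words are $z_1=0$ and $z_2=1$. Then $M_{i,j}=\alpha$ exactly when $z_i=z_j$, i.e., when $h=0$, and $M_{i,j}=\beta$ when $h=1$. This matches $\alpha^{1-h}\beta^{h}$.

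For the inductive step, I write $M^{\otimes k}=M\otimes M^{\otimes(k-1)}$. By the definition of the Kronecker product, the entry in position $(c\,2^{k-1}+i',\ d\,2^{k-1}+j')$, with $c,d\in\{0,1\}$ and $1\le i',j'\le 2^{k-1}$, equals
\[
M_{c+1,d+1}\,[M^{\otimes(k-1)}]_{i',j'}.
\]
By the base case, the first factor is $\alpha^{1-h(c,d)}\beta^{h(c,d)}$. By the induction hypothesis, the second factor is $\alpha^{k-1-h(z'_{i'},z'_{j'})}\beta^{h(z'_{i'},z'_{j'})}$. The product therefore has $\beta$-exponent $h(c,d)+h(z'_{i'},z'_{j'})$, which equals $h(cz'_{i'},dz'_{j'})=h(z_i,z_j)$ by additivity of the Hamming distance. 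The $\alpha$-exponent is $k$ minus this quantity, which gives the stated formula.

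The only point requiring care is the indexing bijection: the first (most significant) letter of $z_i$ must correspond to the outer Kronecker factor, consistently with genealogical order. Once that identification is fixed, the rest is a direct computation.
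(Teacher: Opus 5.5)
Your proposal is correct and follows the same route as the paper, which simply says to proceed by induction on $k$. You fill in the details the paper leaves implicit: the identification of genealogical order with the $k$-bit binary expansion of $i-1$, the block indexing of $M\otimes M^{\otimes(k-1)}$, and the additivity of the Hamming distance over letters.
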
  
  \begin{proof}
    Proceed by induction on $k$. \qed
  \end{proof}
  As an example, here is the matrix of Hamming distances for words of length~$2$ and the corresponding matrix $M^{\otimes 2}$
  \[
    \begin{pmatrix}
      0&1&1&2\\
      1&0&2&1\\
      1&2&0&1\\
      2&1&1&0\\
    \end{pmatrix}\quad \text{ and }\quad 
    M^{\otimes 2}=\left(
\begin{array}{cccc}
 \alpha^2 & \alpha \beta & \alpha \beta & \beta^2 \\
 \alpha \beta & \alpha^2 & \beta^2 & \alpha \beta \\
 \alpha \beta & \beta^2 & \alpha^2 & \alpha \beta \\
 \beta^2 & \alpha \beta & \alpha \beta & \alpha^2 \\
\end{array}
\right).
\]
  
We are in a position to state the main result of this section.
\begin{theorem}\label{thm:mainTM}
  Let $k\ge 1$, $\ell\ge 2$ and $m,n\ge 1$, we have
\[
  \bc{\infw{T}}{k,\ell}(m,n)=\left\{\begin{array}{ll}
   \frac{\bc{\infw{t}}{k}(m).\bc{\infw{t}}{\ell}(n) }{2}, & \text{ if } m\not\equiv 0\pmod{2^k};\\
        \frac{(\bc{\infw{t}}{k}(m)-1).\bc{\infw{t}}{\ell}(n)}{2}+1, & \text{ if }m\equiv 0\pmod{2^k}.\\
      \end{array}\right.
  \]
In particular, for fixed $k\ge 1$ and $\ell\ge2$, the function $\bc{\infw{T}}{k,\ell}(m,n)$ is ultimately periodic in each coordinate, with periods $2^k$ and $2^\ell$, respectively.  
  



\end{theorem}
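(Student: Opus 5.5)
Every $m\times n$ factor of $\infw{T}$ has the form $W=u\oplus v$ with $u\in\Fac_m(\infw{t})$ and $v\in\Fac_n(\infw{t})$, and $u\oplus v=\overline{u}\oplus\overline{v}$. Column $j$ of $u\oplus v$ equals $u$ if $v_j=0$ and $\overline{u}$ if $v_j=1$. The proof will therefore start from the expansion \eqref{eq:develop} of \cref{thm:bounded_complexity} with $f$ the sum modulo $2$. For $P\in\{0,1\}^{k\times\ell}$ (assuming $m\ge k$, $n\ge \ell$ by the same reduction as before),
\[
\binom{u\oplus v}{P}=\sum_{z\in\{0,1\}^\ell}\binom{v}{z}\prod_{t=1}^{\ell}\binom{u}{P_t\oplus z_t},
\]
where $P_t\oplus z_t$ means $P_t$ complemented when $z_t=1$. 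Thus $\binom{u\oplus v}{P}$ depends only on the pair of classes $([u]_k,[v]_\ell)$. Moreover the pairs $([u]_k,[v]_\ell)$ and $([\overline u]_k,[\overline v]_\ell)$ give the same array class, since $u\oplus v=\overline u\oplus\overline v$. The upper bound is then the number of orbits of the involution $([u],[v])\mapsto([\overline u],[\overline v])$ on the product of class sets, and the lower bound comes from showing that only these identifications occur.

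\textbf{Step 1: counting orbits.} By \cref{pro:counting_classes} and \cref{rem:small_m}, complement acts on the $\sim_k$-classes of length-$m$ factors with exactly one fixed class when $2^k\mid m$ and none otherwise. For $m<2^k$ there is no fixed class, since $u\neq\overline u$ and classes are singletons. Because $\ell\ge2$, complement has no fixed $\sim_\ell$-class of length-$n$ factors. It cannot have one when $2^\ell\nmid n$. When $2^\ell\mid n$, $\ell\ge 2$, the fixed class $[\varphi^\ell(w)]_\ell$ is fixed, so this case needs care. I expect the formula to still hold because a fixed pair requires both components fixed. The product has a fixed point only if both factors have fixed classes, and I will recheck this against the second case of the formula. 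In fact the formula's second case $\frac{(b_k-1)b_\ell}{2}+1$ suggests that when $u$ is in the fixed class, all pairs $([u],[v])$ for varying $[v]$ collapse to a single array class. The fixed class contains $\varphi^k(w)$, whose columns pattern is irrelevant by \cref{lem:kkequiv}: all columns $u$ or $\overline u$ are $\sim_k$-equivalent, so $u\oplus v\sim_{k,\ell}u\oplus v'$ for any $v,v'$. This yields exactly $\frac{(b_k-1)b_\ell}{2}+1$ orbits-after-collapse, with no pairs fixed otherwise, giving the upper bound in both cases.

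\textbf{Step 2: lower bound (the main obstacle).} I must show that if $[u]_k$ is not fixed by complement, then $u\oplus v\sim_{k,\ell}u'\oplus v'$ forces $([u'],[v'])\in\{([u],[v]),([\overline u],[\overline v])\}$. I would mimic \cref{pro:complex_prod}, recovering the data from coefficients of special shapes.

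With $1\times1$ and $k\times1$ patterns $P=w$ one gets $\binom{u\oplus v}{w}=|v|_0\binom{u}{w}+|v|_1\binom{u}{\overline w}$. Using $|v|_0-|v|_1\in\{-1,0,1\}$ and the vectors $\delta$ and $\sigma$ from \cref{def:deltasigma}, this gives the symmetric sums $\binom{u}{w}+\binom{u}{\overline w}$ and a signed difference weighted by $|v|_0-|v|_1$.

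The pattern with a constant first row across $\ell$ columns (a $1\times\ell$ pattern $a\times z$-like) and $k\times 2$ patterns $(w,w)$ versus $(w,\overline w)$ would detect $\binom{v}{z}$ up to complement, coupled with $\Psi_k(u)$ up to complement. Here \cref{lem:hamming} enters. With $\alpha=\binom{u}{x}$ and $\beta=\binom{u}{\overline x}$ per column letter, the $k\times\ell$ coefficients form the vector $\Psi_\ell(v)$ multiplied by a Kronecker-type matrix $M^{\otimes\ell}$. I would show that $M^{\otimes \ell}$ is invertible whenever $\alpha\neq\pm\beta$; its eigenvalues are $(\alpha+\beta)^{\ell-i}(\alpha-\beta)^i$. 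Nonfixedness of $[u]_k$ provides some $w$ with $\binom uw\neq\binom u{\overline w}$, so $\Psi_\ell(v)$ is recovered up to the global complement ambiguity. Then $\Psi_k(u)$ is recovered from the $k\times1$ data, with the sign fixed consistently.

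The delicate point is aligning the two complement choices so that $u'$ and $v'$ are either both unchanged or both complemented. I would handle this by fixing a $w$ with $\binom{u}{w}\neq\binom{u}{\overline w}$ and comparing products over pairs of columns. Finally, ultimate periodicity follows from \cref{thm:lejeune}.
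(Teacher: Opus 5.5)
Your architecture is the paper's: the expansion \eqref{eq:develop}, orbits of the simultaneous complement, collapse of the fixed vertical class via \cref{lem:kkequiv}, and inversion of the Kronecker matrix of \cref{lem:hamming}. Step~1 is sound once you delete the sentence claiming there is no fixed $\sim_\ell$-class. There is one when $2^\ell\mid n$, but it is harmless, because a fixed pair needs $[u]_k$ fixed.

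The gap is in Step~2. For a fixed $w$, the relation is $M_{\alpha,\beta}\Psi_\ell(v)=\bigl(\binom{W}{P^{(z)}}\bigr)_{z}$. Here $P^{(z)}$ has $j$th column $w$ or $\overline w$ according to $z_j$, and $M_{\alpha,\beta}$ is built from $\alpha=\binom{u}{w}$, $\beta=\binom{u}{\overline w}$, i.e., from $u$ itself. If $u\oplus v\sim_{k,\ell}u'\oplus v'$, what you actually obtain is $M_{\alpha,\beta}\Psi_\ell(v)=M_{\alpha',\beta'}\Psi_\ell(v')$. Invertibility of $M_{\alpha,\beta}$ yields nothing until you know $\{\alpha',\beta'\}=\{\alpha,\beta\}$.

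You intend to recover $\Psi_k(u)$, hence $\alpha,\beta$, only afterwards from the $k\times 1$ data, and that is circular. Those data are $|v|_0\alpha+|v|_1\beta$ and $|v|_1\alpha+|v|_0\beta$, which presuppose $|v|_0,|v|_1$. When $|v|_0=|v|_1$ they only give $\alpha+\beta$ and carry no information on $\binom{u}{w}-\binom{u}{\overline w}$. Separately, $|v|_0-|v|_1\in\{-1,0,1\}$ is false: $1101$ is a factor of $\infw{t}$.

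The missing step is to show first, using patterns of width at most $2$, that for every $x\in\{0,1\}^k$ the unordered pair $\{\binom{u}{x},\binom{u}{\overline x}\}$ is an invariant of $[W]_{k,\ell}$. Write $(p\,|\,q)$ for the pattern with columns $p,q$, and set $A=\binom{W}{x}$, $B=\binom{W}{\overline x}$, $D=\binom{W}{(x\,|\,x)}$, $E=\binom{W}{(\overline x\,|\,\overline x)}$. The expansion gives $A^2-2D=|v|_0\binom{u}{x}^{2}+|v|_1\binom{u}{\overline x}^{2}$ and $B^2-2E=|v|_1\binom{u}{x}^{2}+|v|_0\binom{u}{\overline x}^{2}$. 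Hence $\binom{u}{x}+\binom{u}{\overline x}=(A+B)/n$ and $\binom{u}{x}^{2}+\binom{u}{\overline x}^{2}=(A^2+B^2-2D-2E)/n$ are class invariants.

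This needs no assumption on $u$, so it also separates the collapsed class: if $[u']_k$ is fixed, all these pairs are diagonal, whereas $\alpha\neq\beta$. With it, your alignment goes through:
\begin{enumerate}
\item Replacing $(u',v')$ by $(\overline{u'},\overline{v'})$, you may assume $\binom{u'}{w}=\alpha$, so inverting $M_{\alpha,\beta}$ gives $v'\sim_\ell v$.
\item If $|v|_0\neq|v|_1$, then $u'\sim_k u$ follows from the $k\times 1$ data.
\item Otherwise, use $\binom{W}{(x\,|\,x')}+\binom{W}{(\overline x\,|\,\overline{x'})}-\binom{W}{(x\,|\,\overline{x'})}-\binom{W}{(\overline x\,|\,x')}=-|v|_0\,\Delta_x\Delta_{x'}$, where $\Delta_x=\binom{u}{x}-\binom{u}{\overline x}$. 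Taking $x'=w$, the value $\Delta_w=\alpha-\beta\neq 0$ fixes the sign.
\end{enumerate}

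Finally, $n=1$ must be handled separately, since reducing $\ell$ to $\min(\ell,n)=1$ leaves no width-$2$ patterns. There $\bc{\infw{T}}{k,\ell}(m,1)=\bc{\infw{t}}{k}(m)$, which both branches give since $\bc{\infw{t}}{\ell}(1)=2$.
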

To get the full picture, recall from \cref{thm:lejeune} that $\bc{\infw{t}}{k}(m)=\mathsf{p}_{\infw{t}}(m)$ (resp. $\bc{\infw{t}}{\ell}(n)=\mathsf{p}_{\infw{t}}(n)$) if $m<2^k$ (resp. $n<2^\ell$).
\begin{example}
In \cref{tab:b23}, we have computed the first values of $\bc{\infw{T}}{2,3}(m,n)$. We can see four regions. The upper-left region is the product of factor complexities divided by~$2$. The upper-right region repeats periodically to the right. The lower-left region repeats periodically to the bottom. Finally, the lower-right region repeats in both directions. In particular, the last row and column show the beginning of a new period.
  \begin{table}[h!t]
\[
  \begin{array}{c|ccccccc|cccccccccc}
&1&2&3&4&5&6&7&8&9&10&11&12&13&14&15&16&\cdots\\
    \hline
1 &2 & 4 & 6 & 10 & 12 & 16 & 20 & 21 & 20 & 20 & 20 & 20 & 20 & 20 & 20 & 21\\
2 &4 & 8 & 12 & 20 & 24 & 32 & 40 & 42 & 40 & 40 & 40 & 40 & 40 & 40 & 40 & 42\\
3 & 6 & 12 & 18 & 30 & 36 & 48 & 60 & 63 & 60 & 60 & 60 & 60 & 60 & 60 & 60 & 63\\
  \hline
4 &9 & 17 & 25 & 41 & 49 & 65 & 81 & 85 & 81 & 81 & 81 & 81 & 81 & 81 & 81 & 85 \\
5 &8 & 16 & 24 & 40 & 48 & 64 & 80 & 84 & 80 & 80 & 80 & 80 & 80 & 80 & 80 & 84 \\
6 &8 & 16 & 24 & 40 & 48 & 64 & 80 & 84 & 80 & 80 & 80 & 80 & 80 & 80 & 80 & 84 \\
7 &8 & 16 & 24 & 40 & 48 & 64 & 80 & 84 & 80 & 80 & 80 & 80 & 80 & 80 & 80 & 84 \\
    8 &9 & 17 & 25 & 41 & 49 & 65 & 81 & 85 & 81 & 81 & 81 & 81 & 81 & 81 & 81 & 85 \\
    \vdots & &&&&&&&&&&&&&&&&\ddots \\
\end{array}
\]
 
  \caption{The first values of $\bc{\infw{T}}{2,3}(m,n)$.}
  \label{tab:b23}
\end{table}
\end{example}


Before proceeding to the proof of Theorem~\ref{thm:mainTM}, two auxiliary results are needed.

\begin{proposition}\label{pro:reconstruction}
Let $k\ge 1$, $\ell\ge 2$, $m\ge k$, $n\ge\ell$, and let
${u}\in\Fac_{m}(\infw{t})$, ${v}\in\Fac_{n}(\infw{t})$ with
${u}\not\sim_{k}\overline{{u}}$. Set ${W}={u}\oplus {v}$.
\begin{enumerate}[label={\upshape(\roman*)}]
\item The class $[{W}]_{k,\ell}$ determines the pair
  $([{u}]_{k},[{v}]_{\ell})$ up to the simultaneous bit-wise complement 
  $([{u}]_{k},[{v}]_{\ell})\leftrightarrow ([\overline{{u}}]_{k},[\overline{{v}}]_{\ell})$.
\item If moreover ${v}\not\sim_{\ell}\overline{{v}}$, then
  ${u}\oplus {v}\not\sim_{k,\ell}{u}\oplus\overline{{v}}$.
\end{enumerate}
\end{proposition}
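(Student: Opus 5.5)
The plan is to write the binomial coefficients of $W=u\oplus v$ as a multilinear expression in two vectors attached to $u$ and the $\ell$-spectrum of $v$, and then invert it. The $j$-th column of $W$ is $u$ if $v_j=0$ and $\overline{u}$ if $v_j=1$. Expanding \cref{def:main} as in \eqref{eq:develop} from the proof of \cref{thm:bounded_complexity}, for every $P=(P_1\cdots P_\ell)\in\{0,1\}^{k\times\ell}$ we get
\[
\binom{W}{P}=\sum_{z\in\{0,1\}^\ell}\binom{v}{z}\prod_{t=1}^{\ell}X_{z_t,P_t},
\]
where $X$ is the $2\times 2^k$ matrix with rows $x=(\binom{u}{w})_{w\in\{0,1\}^k}$ and $y=(\binom{\overline{u}}{w})_w=(\binom{u}{\overline{w}})_w$. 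In vector form this reads $\Sp_{k\times\ell}(W)=(X^{\otimes\ell})^{\top}\Psi_\ell(v)$. By \cref{pro:limitkl}, the class $[W]_{k,\ell}$ is exactly this vector, and the class of $W$ also determines the analogous data for the smaller sizes $k\times 1$ and $k\times 2$.

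Two facts about $X$ will be used.
\begin{itemize}
\item[(1)] The rows $x$ and $y$ have the same coordinate sum $\binom{m}{k}$, and $x\neq y$ because $u\not\sim_k\overline{u}$. Hence $x$ and $y$ are linearly independent, since two proportional vectors with equal positive sums coincide, and $X$ has rank $2$.
\item[(2)] Consequently $X^{\otimes\ell}$ has rank $2^\ell$ (Kronecker products of full-rank matrices), so $(X^{\otimes\ell})^\top$ is injective. Once $X$ is known up to swapping its two rows, $\Psi_\ell(v)$ is uniquely recovered. Swapping the rows of $X$ amounts to replacing $u$ by $\overline{u}$, and then the same equation is solved by $\Psi_\ell(\overline{v})$, because $u\oplus v=\overline{u}\oplus\overline{v}$. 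So the ambiguity is exactly the simultaneous complement in (i).
\end{itemize}

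For (i), the main obstacle is recovering the unordered pair $\{x,y\}$ from $[W]_{k,\ell}$ without knowing $v$. I would use the $k\times 2$ coefficients, which form the $2^k\times 2^k$ matrix $Q=X^\top G X$ with $G=\bigl(\binom{v}{ab}\bigr)_{a,b\in\{0,1\}}$. I would also use the $k\times 1$ coefficients $c=|v|_0\,x+|v|_1\,y$, together with the length $n=|v|_0+|v|_1$.
\begin{itemize}
\item[(3)] First try to obtain $\mathrm{span}(x,y)$ as the column space of $Q$. This needs $G$ nonsingular; if $\det G=0$, I would instead use $c$ together with $Q$, or the higher-order data from $\ell\ge 2$.
\item[(4)] Then single out $x$ and $y$ inside this plane using the complement involution $\sigma\colon w\mapsto\overline{w}$ on indices. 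It satisfies $y=x\circ\sigma$, so $\sigma$ preserves the plane and swaps $x$ and $y$. The candidates are therefore $ax+by$ with $a+b=1$, whose $\sigma$-image is $bx+ay$.
\item[(5)] The remaining one-parameter freedom should be killed by requiring that $Q$ factor as $X'^\top G' X'$ with $G'$ a legitimate $2\times2$ matrix whose entries sum to $\binom{n}{2}$, consistently with $c$. In the worst case I would also invoke the integrality and nonnegativity of binomial coefficients.
\end{itemize}
This step is where I expect the real work to be, and where the earlier lemma on recovering $x_i,y_j$ from $A=(x_iy_j)$ and its sums may be adapted. Once $\{x,y\}$ is fixed, (2) gives $[v]_\ell$ up to the simultaneous swap, and $x$ itself is $\Psi_k(u)$, which gives $[u]_k$ (with $m\ge k$, so \cref{pro:limitkl} and Manvel's formula handle the lower-order coefficients).

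Part (ii) then follows formally from (i). Suppose $u\oplus v\sim_{k,\ell}u\oplus\overline{v}$. Applying (i) to both arrays, the pair $([u]_k,[\overline{v}]_\ell)$ equals either $([u]_k,[v]_\ell)$ or $([\overline{u}]_k,[\overline{v}]_\ell)$. The first alternative forces $v\sim_\ell\overline{v}$, and the second forces $u\sim_k\overline{u}$. Both contradict the hypotheses, so $u\oplus v\not\sim_{k,\ell}u\oplus\overline{v}$.
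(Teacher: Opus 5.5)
Your set-up is correct, and parts of it are neater than the paper's. The identity $\Sp_{k\times\ell}(W)=(X^{\otimes\ell})^\top\Psi_\ell(v)$, the injectivity of $(X^{\otimes\ell})^\top$, and the remark that knowing $X$ up to a row swap recovers $([u]_k,[v]_\ell)$ \emph{jointly} up to simultaneous complement are all fine. So is your derivation of (ii) from (i); the paper goes the other way, proving (ii) first and using it to discard the two cross candidates in (i).

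The gap is the step you flag yourself: you never actually recover the unordered pair $\{x,y\}$ from $[W]_{k,\ell}$, and the mechanism in (3)--(5) does not work as stated. In (3), $G=\bigl(\binom{v}{ab}\bigr)$ is singular for genuine factors such as $v=01$, $011$, $001$, and your fallback is not argued. More seriously, the constraints you name in (5) cannot remove the parameter. Write $X'=TX$ with $T=\begin{pmatrix}a&1-a\\1-a&a\end{pmatrix}$ and $n_i=|v|_i$. Since $T=T^\top$ and $T\mathbf{1}=\mathbf{1}$, the matrix $G'=T^{-1}GT^{-1}$ satisfies $X'^\top G'X'=Q$ and has entry sum $\mathbf{1}^\top G\mathbf{1}=\binom{n}{2}$. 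Likewise $c=n_0x+n_1y=n_0'x'+n_1'y'$ with $(n_0',n_1')=(n_0,n_1)T^{-1}$ and $n_0'+n_1'=n$. So every $a\neq\frac12$ passes both tests, and appealing to integrality and nonnegativity is not an argument.

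The missing idea is that part of $G$ is forced by the Parikh vector of $v$: $\binom{v}{00}=\binom{n_0}{2}$, $\binom{v}{11}=\binom{n_1}{2}$ and $\binom{v}{01}+\binom{v}{10}=n_0n_1$. The paper uses this coordinatewise. Let $A,B$ be the coefficients of the $k\times 1$ patterns $w,\overline{w}$, and $D,E$ those of the $k\times 2$ patterns with both columns $w$, resp.\ both $\overline{w}$. Then $X_w+X_{\overline{w}}=(A+B)/n$ and $X_w^2+X_{\overline{w}}^2=(A^2+B^2-2D-2E)/n$, which gives each unordered pair $\{X_w,X_{\overline{w}}\}$. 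The paper then glues these per-$w$ choices together. When $n_0\neq n_1$ it uses the linear relations. In the balanced case it uses $C_{0,0}+C_{1,1}-C_{0,1}-C_{1,0}=-n_0\Delta_w\Delta_{w'}$, which fixes $(\Delta_w)_w$ up to a global sign.

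In your matrix notation the same fact closes the gap in one step. Let $J$ be the permutation matrix of $w\mapsto\overline{w}$. Then $c-Jc=(n_0-n_1)(x-y)$ and $(I-J)Q(I-J)=\frac{(n_0-n_1)^2-n}{2}(x-y)(x-y)^\top$, hence
\[
  c+Jc=n(x+y),\qquad (c-Jc)(c-Jc)^\top-2(I-J)Q(I-J)=n\,(x-y)(x-y)^\top .
\]
These determine $x+y$ and $\pm(x-y)$, i.e., $\{x,y\}$. With this inserted, the rest of your argument goes through.
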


\begin{proof}  
  Since $u\not\sim_k \overline{u}$, there exists a word $w\in\{0,1\}^k$ such that $$\alpha=\binom{u}{w}\neq\binom{\overline{u}}{w}=\binom{u}{\overline{w}}=\beta.$$

\noindent  
\underline{Part 1.} We first show that the
unordered pair $\{\alpha,\beta\}$ is determined by
$[{W}]_{k,\ell}$.

Let $z=a_1\cdots a_\ell\in\{0,1\}^\ell$. We define the array $P^{(z)}=w\oplus z$, i.e., the $j^{\text{th}}$ column of $P^{(z)}$ is $w$ (resp. $\overline{w}$) if $a_j=0$ (resp. $a_j=1$).
 Let $x$ be a word and $a\in\{0,1\}$. We denote by $\overline{x}^a$ the word
  \[
    \overline{x}^a=\left\{
      \begin{array}{l}
        x, \text{ if }a=0;\\
        \overline{x}, \text{ if }a=1.\\
      \end{array}\right.
    \]   
 Proceeding as in the proof of \cref{thm:bounded_complexity}, we get, as in \eqref{eq:develop}, 
  \begin{align}
    \binom{u\oplus v}{P^{(z)}}&=\sum_{x=b_1\cdots b_\ell \in\{0,1\}^\ell}\binom{v}{x} \binom{\overline{u}^{b_1}}{\overline{w}^{a_1}}\cdots\binom{\overline{u}^{b_\ell}}{\overline{w}^{a_\ell}} \nonumber\\
                              &=\sum_{x=b_1\cdots b_\ell \in\{0,1\}^\ell}\binom{v}{x} \alpha^{\sum_{j=1}^\ell \delta_{a_j,b_j}} \beta^{\ell-\sum_{j=1}^\ell \delta_{a_j,b_j}} \label{eq:asin}
  \end{align}
because $\binom{\overline{u}^{b_j}}{\overline{w}^{a_j}}=\binom{u}{w}=\alpha$ if $a_j=b_j$ and $\binom{\overline{u}^{b_j}}{\overline{w}^{a_j}}=\binom{u}{\overline{w}}=\beta$ if $a_j=1-b_j$.
  
  Let $z_1<z_2<\cdots <z_{2^\ell}$ be the words of $\{0,1\}^\ell$ genealogically ordered. The above relation shows that there is a $2^\ell\times 2^\ell$ matrix $M$ such that
  \begin{equation}\label{eq:kronecker_fold}
    M
    \begin{pmatrix}
      \binom{v}{z_1}\\
      \vdots\\
      \binom{v}{z_{2^\ell}}\\
    \end{pmatrix}=
    \begin{pmatrix}
      \binom{u\oplus v}{P^{(z_1)}}\\
      \vdots\\
      \binom{u\oplus v}{P^{(z_{2^\ell})}}\\
      \end{pmatrix}=
    \begin{pmatrix}
      \binom{W}{P^{(z_1)}}\\
      \vdots\\
      \binom{W}{P^{(z_{2^\ell})}}\\
      \end{pmatrix}.
    \end{equation}
    Thanks to \cref{lem:hamming}, $M$ is the $\ell$-fold Kronecker product of
    \[
      \begin{pmatrix}
  \alpha & \beta \\
  \beta & \alpha \\
\end{pmatrix}.
\]
Hence $\det M=(\alpha^2-\beta^2)^{\ell 2^{\ell-1}}$ and $M$ is invertible because $\alpha\neq\beta$ (more precisely, $\alpha-\beta$ and $\alpha+\beta$ are non-zero).

Since $M$ depends on $\alpha$ and~$\beta$, we are now ready to show that the
unordered pair $\{\alpha,\beta\}$ is determined by
$[{W}]_{k,\ell}$. Set
\[
  A:=\binom{{W}}{{w}},\quad
  B:=\binom{{W}}{\overline{{w}}},\quad
  D:=
    \binom{{W}}{\begin{pmatrix}{w}&{w}\end{pmatrix}},\quad
  E:=
    \binom{{W}}{\begin{pmatrix}\overline{{w}}&\overline{{w}}\end{pmatrix}}.
\]
These are binomial coefficients of~${W}$ with patterns of size at
most $k\times 2$, hence determined by
$[{W}]_{k,\ell}$. Recalling that $|v|=n=|v|_0+|v|_1$, 
the expansion~\eqref{eq:develop} with $\ell=1$ gives the two linear relations 
\begin{equation}
  \label{eq:linrel}
  A=|v|_0\, \alpha+|v|_1\, \beta \quad\text{ and }\quad B=|v|_1\, \alpha+|v|_0\, \beta.
\end{equation}
Adding these two equations determines $\alpha+\beta$
\[
  \alpha+\beta=\frac{A+B}{n}.
\]
To recover the remaining symmetric information without assuming prior knowledge of $|v|_0$ and $|v|_1$, we also use patterns of width two. 
The expansion~\eqref{eq:develop} with $\ell=2$ gives
$D=\binom{|v|_0}{2}\alpha^{2}+ \binom{|v|_1}{2}\beta^{2}+|v|_0\, |v|_1\,\alpha\beta$. 
Moreover, expanding $(|v|_0\, \alpha+|v|_1\, \beta)^{2}=A^{2}$ and comparing
with $2D$ yields
$$|v|_0\, \alpha^{2}+|v|_1\, \beta^{2}=A^{2}-2D;$$
the same computation with
$\overline{{w}}$ in place of~${w}$ gives
$|v|_1\, \alpha^{2}+|v|_0\, \beta^{2}=B^{2}-2E$. Adding these two identities,
\[
  \alpha^{2}+\beta^{2}
  =\frac{A^{2}+B^{2}-2D-2E}{n},
\]
and therefore $\alpha+\beta$ and 
$\alpha\beta=\bigl[(\alpha+\beta)^{2}-(\alpha^{2}+\beta^{2})\bigr]/2$
are determined by $[{W}]_{k,\ell}$. Hence $\alpha$ and $\beta$ are
the roots of a known quadratic polynomial, and the unordered pair
$\{\alpha,\beta\}$ is determined.
\smallskip

\noindent
\underline{Part 2.} Since $|v|=n\geq\ell$, the equivalence class $[{v}]_{\ell}$ is determined by the components $\binom{v}{z_1},\ldots,\binom{v}{z_{2^\ell}}$ of the vector occurring in \eqref{eq:kronecker_fold}. Proceed to the same computation as \eqref{eq:asin} with $\overline{v}$:
 \begin{align*}
   \binom{u\oplus \overline{v}}{P^{(z)}}&=\sum_{x=b_1\cdots b_\ell \in\{0,1\}^\ell}\binom{v}{\overline{x}} \binom{\overline{u}^{b_1}}{\overline{w}^{a_1}}\cdots\binom{\overline{u}^{b_\ell}}{\overline{w}^{a_\ell}}\\
   &=\sum_{x=b_1\cdots b_\ell \in\{0,1\}^\ell}\binom{v}{x} \binom{\overline{u}^{(1-b_1)}}{\overline{w}^{a_1}}\cdots\binom{\overline{u}^{(1-b_\ell)}}{\overline{w}^{a_\ell}}\\
                              &=\sum_{x=b_1\cdots b_\ell \in\{0,1\}^\ell}\binom{v}{x} \beta^{\sum_{j=1}^\ell \delta_{a_j,b_j}} \alpha^{\ell-\sum_{j=1}^\ell \delta_{a_j,b_j}}
  \end{align*}
Hence swapping $\alpha$ and $\beta$ in~$M$ amounts to interchanging~${v}$
and~$\overline{{v}}$ in the vector recovered from the inversion of~$M$ in \eqref{eq:kronecker_fold}, i.e., 
$[W]_{k,\ell}$ determines $[{v}]_{\ell}$ up to bit-wise complement.
\smallskip

\noindent
\underline{Part 3.} Next, we show that once $[{W}]_{k,\ell}$ and $[{v}]_{\ell}$ are
fixed, $[{u}]_{k}$ is determined up to bit-wise complement.

The first part of the proof shows that $\{\alpha,\beta\}$, defined from a particular word~$w$, is determined by the class $[W]_{k,\ell}$. The fact that $\alpha \neq \beta$ is only useful for inverting the matrix $M$. We can observe that the approach used can actually be applied to any word $w$ of length $k$. We can thus conclude that, for all $w\in\{0,1\}^k$, the unordered pair
$$\left\{
X_w:=\binom{u}{w},\, X_{\overline{w}}:=\binom{u}{\overline{w}}
\right\}$$
is determined by the class $[W]_{k,\ell}$. 
If moreover $|v|_0\neq|v|_1$, then we have two linear relations (similar to \eqref{eq:linrel}) that are enough to recover exactly $X_w$ and $X_{\overline{w}}$.

So we will now assume that $|v|_0=|v|_1$, and thus only the unordered pair is determined. 
For all $w\in\{0,1\}^k$, we set
\[
  \Delta_{{w}}:= X_{{w}}-X_{\overline{{w}}}.
\]
To recover the $\Delta_{{w}}$'s up to a global sign, we use patterns
of width two. For ${w},{w}'\in\{{0},{1}\}^{k}$ and ${a},{b}\in\{{0},{1}\}$, let
\[
  C_{{a},{b}}({w},{w}')
  :=
  \binom{{u}\oplus {v}}
    {\begin{pmatrix}\overline{{w^\top}}^{\,{a}}&\overline{{w'}^\top}^{\,{b}}\end{pmatrix}}
  \]
where $w^\top$ and ${w'}^\top$ are viewed as columns to count a $k\times 2$ pattern occurrence. 
Applying~\eqref{eq:develop} with~$\ell=2$ gives
\begin{align*}
  C_{{0},{0}}({w},{w}')
  &=\binom{{v}}{{00}}X_{{w}}X_{{w}'}
   +\binom{{v}}{{01}}X_{{w}}X_{\overline{{w}'}}
   +\binom{{v}}{{10}}X_{\overline{{w}}}X_{{w}'}
   +\binom{{v}}{{11}}X_{\overline{{w}}}X_{\overline{{w}'}},\\
  C_{{0},{1}}({w},{w}')
  &=\binom{{v}}{{00}}X_{{w}}X_{\overline{{w}'}}
   +\binom{{v}}{{01}}X_{{w}}X_{{w}'}
   +\binom{{v}}{{10}}X_{\overline{{w}}}X_{\overline{{w}'}}
   +\binom{{v}}{{11}}X_{\overline{{w}}}X_{{w}'},\\
  C_{{1},{0}}({w},{w}')
  &=\binom{{v}}{{00}}X_{\overline{{w}}}X_{{w}'}
   +\binom{{v}}{{01}}X_{\overline{{w}}}X_{\overline{{w}'}}
   +\binom{{v}}{{10}}X_{{w}}X_{{w}'}
   +\binom{{v}}{{11}}X_{{w}}X_{\overline{{w}'}},\\
  C_{{1},{1}}({w},{w}')
  &=\binom{{v}}{{00}}X_{\overline{{w}}}X_{\overline{{w}'}}
   +\binom{{v}}{{01}}X_{\overline{{w}}}X_{{w}'}
   +\binom{{v}}{{10}}X_{{w}}X_{\overline{{w}'}}
   +\binom{{v}}{{11}}X_{{w}}X_{{w}'}.
\end{align*}
Hence
\begin{align*}
  &C_{{0},{0}}({w},{w}')
  +C_{{1},{1}}({w},{w}')
  -C_{{0},{1}}({w},{w}')
  -C_{{1},{0}}({w},{w}')\\
  &\qquad
  =\Bigl(\binom{{v}}{{00}}
        -\binom{{v}}{{01}}
        -\binom{{v}}{{10}}
        +\binom{{v}}{{11}}\Bigr)
   \,\Delta_{{w}}\,\Delta_{{w}'}.
\end{align*}
Since $|{v}|_{{0}}=|{v}|_{{1}}>0$, we have
\[
  \binom{{v}}{{00}}-\binom{{v}}{{01}}
  -\binom{{v}}{{10}}+\binom{{v}}{{11}}
  =\binom{|{{v}|}}{2}-2|{{v}|}_{{0}}^{2}
  =-|{v}|_{{0}}\neq 0.
\]
Therefore, for every ${w},{w}'\in\{{0},{1}\}^{k}$, the
product $\Delta_{{w}}\,\Delta_{{w}'}$ is determined by
$[{W}]_{k,\ell}$ and~$[{v}]_{\ell}$.

Choose ${w}_{0}\in\{{0},{1}\}^{k}$ such that
$\Delta_{{w}_{0}}\neq 0$; such a word exists because
${u}\not\sim_{k}\overline{{u}}$. Then the family
$(\Delta_{{w}})_{{w\in\{0,1\}^k}}$ is determined up to a common sign, since all
products $\Delta_{{w}}\,\Delta_{{w}_{0}}$ are known. Consequently,
\[
  X_{{w}}=\frac{X_w+X_{\overline{w}}+\Delta_{{w}}}{2},\qquad
  X_{\overline{{w}}}=\frac{X_w+X_{\overline{w}}-\Delta_{{w}}}{2}
\]
are determined up to the simultaneous exchange
$X_{{w}}\longleftrightarrow X_{\overline{{w}}}$ for all
${w}\in\{{0},{1}\}^{k}$, which is exactly the effect of
replacing ${u}$ by~$\overline{{u}}$.  Hence $[{u}]_{k}$ is determined up
to bit-wise complement in the balanced case as well.
\smallskip

\underline{Part 4}. We may now conclude the proof. 
Thus $[{W}]_{k,\ell}$ determines both $[{u}]_{k}$ and $[{v}]_{\ell}$ up
to bit-wise complement, leaving four candidate pairs:
$$([{u}]_{k},[{v}]_{\ell}),\ 
([\overline{{u}}]_{k},[\overline{{v}}]_{\ell}),\ 
([{u}]_{k},[\overline{{v}}]_{\ell}) \text{ and }
([\overline{{u}}]_{k},[{v}]_{\ell}).$$  The first two belong to the
same simultaneous-complement orbit and yield the class
$[{u}\oplus {v}]_{k,\ell}=[{W}]_{k,\ell}$, whereas the last two yield the
class $[{u}\oplus\overline{{v}}]_{k,\ell}$.

If $[{v}]_{\ell}=[\overline{{v}}]_{\ell}$, the two possibilities
coincide and assertion~(i) follows.  Assume now that
$[{v}]_{\ell}\neq[\overline{{v}}]_{\ell}$; we show that the two array
classes are distinct.  Applying~\eqref{eq:kronecker_fold} to both
${u}\oplus {v}$ and ${u}\oplus\overline{{v}}$ gives
\[
  M\begin{pmatrix}
      \binom{{v}}{{z}_{1}}-\binom{\overline{{v}}}{{z}_{1}}\\
      \vdots\\
      \binom{{v}}{{z}_{2^{\ell}}}-\binom{\overline{{v}}}{{z}_{2^{\ell}}}
    \end{pmatrix}
  =\begin{pmatrix}
      \binom{{u}\oplus {v}}{{P}^{({z}_{1})}}-\binom{{u}\oplus\overline{{v}}}{{P}^{({z}_{1})}}\\
      \vdots\\
      \binom{{u}\oplus {v}}{{P}^{({z}_{2^{\ell}})}}-\binom{{u}\oplus\overline{{v}}}{{P}^{({z}_{2^{\ell}})}}
    \end{pmatrix}.
\]
Since $n\geq\ell$, ${v}\not\sim_{\ell}\overline{{v}}$ implies that the left-hand vector is
nonzero; since $M$ is invertible, so is the right-hand one;
hence
$\binom{{u}\oplus {v}}{{P}^{({z}_{j})}}\neq\binom{{u}\oplus\overline{{v}}}{{P}^{({z}_{j})}}$
for some~$j$, and
${u}\oplus {v}\not\sim_{k,\ell}{u}\oplus\overline{{v}}$.  This proves
assertion~(ii) and completes the proof of assertion~(i).
\qed
\end{proof}


\begin{lemma}\label{lem:disjointness}
Let $k\ge 1$, $\ell\ge 2$, and let
${s},{u}\in\Fac_{m}(\infw{t})$,
${v},{v}'\in\Fac_{n}(\infw{t})$ with $m,n\ge 1$. If
${s}\sim_{k}\overline{{s}}$ and $u\not\sim_{k}s$, then
${u}\oplus {v}\not\sim_{k,\ell}{s}\oplus {v}'$.
\end{lemma}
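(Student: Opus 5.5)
The plan is to argue by contradiction and compare only a few binomial coefficients of the two arrays: patterns of width one and width two, with columns of length at most $k$. Since $\ell\ge 2$, all of these coefficients are determined by the $(k,\ell)$-class. Assume that ${u}\oplus{v}\sim_{k,\ell}{s}\oplus{v}'$. Fix an arbitrary word $w\in\{0,1\}^{\le k}$ and set $X_w:=\binom{u}{w}$ and $Y_w:=\binom{s}{w}$. Because ${s}\sim_k\overline{{s}}$ and $|w|\le k$, we have $Y_w=Y_{\overline{w}}$. The goal is to show that $X_w=Y_w$ for every such $w$, which gives ${u}\sim_k{s}$, a contradiction.

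\emph{Width-one patterns.} Every column of ${s}\oplus{v}'$ is either ${s}$ or $\overline{{s}}$. Hence, as in \eqref{eq:develop}, $\binom{{s}\oplus{v}'}{w}=nY_w$ and $\binom{{s}\oplus{v}'}{\begin{pmatrix}w&w\end{pmatrix}}=\binom{n}{2}Y_w^2$. On the other side, write $n_0=|{v}|_0$ and $n_1=|{v}|_1$, so that $n_0+n_1=n$. The same expansion as in the proof of \cref{pro:reconstruction} gives
\[
A:=\binom{{u}\oplus{v}}{w}=n_0X_w+n_1X_{\overline{w}},\qquad
D:=\binom{{u}\oplus{v}}{\begin{pmatrix}w&w\end{pmatrix}}=\tbinom{n_0}{2}X_w^2+\tbinom{n_1}{2}X_{\overline{w}}^2+n_0n_1X_wX_{\overline{w}}.
\]
Equating the width-one coefficients gives $n_0X_w+n_1X_{\overline{w}}=nY_w$.

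\emph{Width-two patterns.} As in Part~1 of \cref{pro:reconstruction}, we have $A^2-2D=n_0X_w^2+n_1X_{\overline{w}}^2$. For ${s}\oplus{v}'$ the same quantity equals $n^2Y_w^2-2\binom{n}{2}Y_w^2=nY_w^2$. Equating the width-two coefficients therefore yields the two relations
\[
n_0X_w+n_1X_{\overline{w}}=nY_w,\qquad n_0X_w^2+n_1X_{\overline{w}}^2=nY_w^2 .
\]
When $n=1$, the width-two coefficients vanish for both arrays. In that case we use only the first relation, which is enough because one of $n_0,n_1$ is then zero.

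\emph{Conclusion.} If $n_1=0$ (respectively $n_0=0$), the first relation gives directly $X_w=Y_w$ (respectively $X_{\overline{w}}=Y_w=Y_{\overline{w}}$). Since $w$ ranges over all words of length at most $k$, and we may apply the latter identity to $\overline{w}$ in place of $w$, we get $X_w=Y_w$ for all $w$ in both cases. If $n_0,n_1>0$, the two relations state that the weighted mean of $X_w$ and $X_{\overline{w}}$ with weights $n_0/n$ and $n_1/n$ is $Y_w$, and that the weighted mean of the squares is $Y_w^2$. The weighted variance is therefore zero, so $X_w=X_{\overline{w}}=Y_w$. This is the equality case of Cauchy--Schwarz, and it is the only delicate step.

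In all cases $\binom{u}{w}=\binom{s}{w}$ for every $w\in\{0,1\}^{\le k}$, hence ${u}\sim_k{s}$, contradicting the hypothesis. The main point to check carefully is the bookkeeping when $n<2$ or $m<k$. When $m<k$, the patterns of height $|w|>m$ give zero coefficients on both sides, so they cause no trouble.
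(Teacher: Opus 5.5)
Your proof is correct, but it takes a genuinely different route from the paper's.

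The paper first uses \cref{pro:counting_classes} and \cref{rem:small_m} to deduce that $[s]_k$ is the unique complement-fixed class of length-$m$ factors, hence $u\not\sim_k\overline{u}$. It then fixes one $w\in\{0,1\}^k$ with $\Delta_w=\binom{u}{w}-\binom{u}{\overline{w}}\neq 0$ and exhibits an antisymmetric quantity that vanishes on $s\oplus v'$ but not on $u\oplus v$. If $|v|_0\neq|v|_1$, this is $\binom{u\oplus v}{w}-\binom{u\oplus v}{\overline{w}}=(|v|_0-|v|_1)\Delta_w$. If $v$ is balanced, it is the alternating width-two sum $C_{0,0}+C_{1,1}-C_{0,1}-C_{1,0}=-|v|_0\Delta_w^2$ evaluated at $(w,w)$.

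You instead use the symmetric coefficients $\binom{u\oplus v}{w}$ and $\binom{u\oplus v}{\begin{pmatrix}w&w\end{pmatrix}}$ for every $w$ and read off the first two weighted moments of $(X_w,X_{\overline{w}})$. Your variance step is the identity $n_0n_1(X_w-X_{\overline{w}})^2=n(n_0X_w^2+n_1X_{\overline{w}}^2)-(n_0X_w+n_1X_{\overline{w}})^2=n^2Y_w^2-n^2Y_w^2=0$. It gives $X_w=Y_w$ for all $w$, i.e., $u\sim_k s$ directly, contradicting the hypothesis.

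What your route buys is independence from the Thue--Morse-specific classification. You only use $s\sim_k\overline{s}$, and never $2^k\mid m$, $m\ge k$, or $u\not\sim_k\overline{u}$. So the lemma holds verbatim for arbitrary binary words $u,s$ of equal length and $v,v'$ of equal length, and the small cases $n=1$ and $m<k$ cost nothing extra.

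The paper's route, in exchange, recycles the $\Delta_w\Delta_{w'}$ computation from Part~3 of \cref{pro:reconstruction}. It also pinpoints one explicit distinguishing pattern attached to a single $w$ with $\Delta_w\neq0$. The price is that it depends on the uniqueness statement of \cref{pro:counting_classes}.
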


\begin{proof}
Since ${s}\sim_{k}\overline{{s}}$, \cref{pro:counting_classes,rem:small_m}
gives $2^{k}\mid m$ and shows that $[{s}]_{k}$ is the unique
 $k$-binomial class in $\Fac_{m}(\infw{t})$ fixed by bit-wise complement; the
hypothesis $u\not\sim_{k}s$ therefore implies
$[{u}]_{k}\neq[\overline{{u}}]_{k}$.  Moreover $m\geq 2^{k}\geq k$, so
 there exists a word 
${w}\in\{{0},{1}\}^{k}$ with
$\Delta_{{w}}:=\binom{{u}}{{w}}-\binom{{u}}{\overline{{w}}}\neq 0$.
Note also that, by ${s}\sim_{k}\overline{{s}}$
and~\eqref{eq:bitwise},
$\binom{{s}}{{x}}=\binom{{s}}{\overline{{x}}}$ for every
${x}\in\{{0},{1}\}^{k}$.

If $|{v}|_{{0}}\neq|{v}|_{{1}}$, then, by 
 \eqref{eq:develop} with $\ell=1$,
$$\binom{{u}\oplus {v}}{{w}}-\binom{{u}\oplus {v}}{\overline{{w}}}
=(|{v}|_{{0}}-|{v}|_{{1}})\,\Delta_{{w}}\neq 0,$$
whereas
$\binom{{s}\oplus {v}'}{{w}}=\binom{{s}\oplus {v}'}{\overline{{w}}}$; hence
${u}\oplus {v}\not\sim_{k,\ell}{s}\oplus {v}'$.

If $|{v}|_{{0}}=|{v}|_{{1}}$, then, in the
notation of the proof of~\cref{pro:reconstruction}, the width-$2$
alternating sum
$C_{{0},{0}}({w},{w})
+C_{{1},{1}}({w},{w})
-C_{{0},{1}}({w},{w})
-C_{{1},{0}}({w},{w})$
equals $-|{v}|_{{0}}\,\Delta_{{w}}^{2}\neq 0$ for
${u}\oplus {v}$ and vanishes for ${s}\oplus {v}'$, the corresponding
difference for~${s}$ being zero; again
${u}\oplus {v}\not\sim_{k,\ell}{s}\oplus {v}'$. \qed
\end{proof}


\begin{proof}[of \cref{thm:mainTM}]
  We first treat the case $m\ge 2^{k}$ and $n\ge 2^{\ell}$.
  \cref{pro:reconstruction} states that the map $([u]_k,[v]_\ell)\mapsto [u\oplus v]_{k,\ell}$, restricted to pairs with $u\not\sim_k\overline{u}$, is at most $2$-to-$1$: If the pairs $([u]_k,[v]_\ell)$ and $([\overline{u}]_k,[\overline{v}]_\ell)$ are distinct, they are mapped to the same $(k,\ell)$-class and only these two pairs are mapped to this $(k,\ell)$-class.

\begin{itemize}
\item \underline{Case~1}: $m\not\equiv 0\pmod{2^{k}}$.\enspace
By~\cref{pro:counting_classes}, no $k$-binomial class on
$\Fac_{m}(\infw{t})$ is fixed by bit-wise complement, so
${u}\not\sim_{k}\overline{{u}}$ for every factor~${u}$.
\smallskip

\item \underline{Case~1.a}: $n\not\equiv 0\pmod{2^{\ell}}$.\enspace
By~\cref{pro:counting_classes},
${v}\not\sim_{\ell}\overline{{v}}$ for every factor~$v\in\Fac_n(\infw{t})$, so~\cref{pro:reconstruction}\,(ii)
gives ${u}\oplus {v}\not\sim_{k,\ell}{u}\oplus\overline{{v}}$. The map is
therefore exactly $2$-to-$1$:
the pairs $([u]_k,[v]_\ell)$ and $([\overline{u}]_k,[\overline{v}]_\ell)$ (resp. $([u]_k,[\overline{v}]_\ell)$ and $([\overline{u}]_k,[v]_\ell)$) are mapped to $[u\oplus v]_{k,\ell}$ (resp. $[u\oplus \overline{v}]_{k,\ell}$). Thus, 
$$\bc{\infw{T}}{k,\ell}(m,n)=\bc{\infw{t}}{k}(m)\cdot\bc{\infw{t}}{\ell}(n)/2.$$
\Cref{fig:case1a} illustrates the situation schematically: dots represent pairs of equivalence
classes $([{u}]_{k},[{v}]_{\ell})$, and edges connect pairs that map
to the same $(k,\ell)$-equivalence class of~$\infw{T}$.

\begin{figure}[h!t]
\centering
\scalebox{0.9}{%
\begin{tikzpicture}[
    >=stealth,
    every node/.style={font=\small},
    blackdot/.style={circle, fill=black, inner sep=1.8pt},
    tealedge/.style={teal, line width=0.9pt},
    orangeedge/.style={orange, line width=0.9pt},
  ]
  \newcommand{\xblock}[2]{%
    \node[blackdot] at (#1-0.45,#2+0.45) {};%
    \node[blackdot] at (#1+0.45,#2+0.45) {};%
    \node[blackdot] at (#1-0.45,#2-0.45) {};%
    \node[blackdot] at (#1+0.45,#2-0.45) {};%
    \draw[tealedge] (#1-0.45,#2+0.45) -- (#1+0.45,#2-0.45);%
    \draw[orangeedge] (#1+0.45,#2+0.45) -- (#1-0.45,#2-0.45);%
  }
  \draw[->, gray!65, thick] (0.85,1.00) -- (5.45,1.00)
        node[midway,above,text=black] {$\bc{\infw{t}}{\ell}(n)$};
  \draw[->, gray!65, thick] (-0.40,0.50) -- (-0.40,-3.90)
        node[midway,left,text=black]  {$\bc{\infw{t}}{k}(m)$};
  \node at (1.25,0.25) {$[{v}_{1}]_{\ell}$};
  \node at (2.20,0.25) {$[\overline{{v}_{1}}]_{\ell}$};
  \node at (3.45,0.25) {$[{v}_{2}]_{\ell}$};
  \node at (4.40,0.25) {$[\overline{{v}_{2}}]_{\ell}$};
  \node at (5.20,0.25) {$\cdots$};
  \node[left] at (0.80,-0.45) {$[{u}_{1}]_{k}$};
  \node[left] at (0.80,-1.35) {$[\overline{{u}_{1}}]_{k}$};
  \node[left] at (0.80,-2.15) {$[{u}_{2}]_{k}$};
  \node[left] at (0.80,-3.05) {$[\overline{{u}_{2}}]_{k}$};
  \node[left] at (0.70,-3.75) {$\vdots$};
  \xblock{1.725}{-0.9}
  \xblock{3.925}{-0.9}
  \xblock{1.725}{-2.6}
  \xblock{3.925}{-2.6}
\end{tikzpicture}%
}
\caption{Equivalence classes for $\sim_{k,\ell}$ when
$m\not\equiv 0\pmod{2^{k}}$ and $n\not\equiv 0\pmod{2^{\ell}}$.}
\label{fig:case1a}
\end{figure}
\smallskip
\item \underline{Case~1.b:} $n\equiv 0\pmod{2^{\ell}}$.\enspace
By~\cref{pro:counting_classes}, there exists a unique
$\ell$-binomial class of length-$n$ factors of~$\infw{t}$ that is
fixed by bit-wise complement; let ${s}\in\Fac_{n}(\infw{t})$ be any
representative, so that ${s}\sim_{\ell}\overline{{s}}$.
Applying~\eqref{eq:bitwise} and using
$\binom{{s}}{{w}}=\binom{\overline{{s}}}{{w}}$ for every
${w}\in\{{0},{1}\}^{\le\ell}$ shows that
${u}\oplus {s}\sim_{k,\ell}\overline{{u}}\oplus {s}$, so the pairs
$([{u}]_{k},[{s}]_{\ell})$ and $([\overline{{u}}]_{k},[{s}]_{\ell})$ yield
the same $(k,\ell)$-class. There are $\frac{\bc{\infw{t}}{k}(m)}{2}$ such $(k,\ell)$-classes.

For every non-fixed class $[{v}]_{\ell}$ (that is, with
${v}\not\sim_{\ell}\overline{{v}}$ and there are $\bc{\infw{t}}{\ell}(n)-1$ such classes), we use 
\cref{pro:reconstruction}\,(ii) as in the previous sub-case. In this situation, we therefore obtain 
$\bc{\infw{t}}{k}(m)\bigl(\bc{\infw{t}}{\ell}(n)-1\bigr)/2$ such $(k,\ell)$-classes. 
The total sum equals
\[
  \frac{\bc{\infw{t}}{k}(m)}{2}
  +\frac{\bc{\infw{t}}{k}(m)\cdot \bigl(\bc{\infw{t}}{\ell}(n)-1\bigr)}{2}
  =\frac{\bc{\infw{t}}{k}(m)\cdot\bc{\infw{t}}{\ell}(n)}{2},
\]
again confirming the Case~1 formula. \Cref{fig:case1b} offers a
schematic illustration.

\begin{figure}[h!t]
\centering
\scalebox{0.9}{%
\begin{tikzpicture}[
    >=stealth,
    every node/.style={font=\small},
    blackdot/.style={circle, fill=black, inner sep=1.8pt},
    blueedge/.style={blue!70!black, line width=0.9pt},
    tealedge/.style={teal, line width=0.9pt},
    orangeedge/.style={orange, line width=0.9pt},
  ]
  \newcommand{\xblock}[2]{%
    \node[blackdot] at (#1-0.45,#2+0.45) {};%
    \node[blackdot] at (#1+0.45,#2+0.45) {};%
    \node[blackdot] at (#1-0.45,#2-0.45) {};%
    \node[blackdot] at (#1+0.45,#2-0.45) {};%
    \draw[tealedge] (#1-0.45,#2+0.45) -- (#1+0.45,#2-0.45);%
    \draw[orangeedge] (#1+0.45,#2+0.45) -- (#1-0.45,#2-0.45);%
  }
  \draw[->, gray!65, thick] (0.85,1.00) -- (6.80,1.00)
        node[midway,above,text=black] {$\bc{\infw{t}}{\ell}(n)$};
  \draw[->, gray!65, thick] (-0.40,0.50) -- (-0.40,-3.90)
        node[midway,left,text=black]  {$\bc{\infw{t}}{k}(m)$};
  \node at (1.30,0.25) {$[{s}]_{\ell}$};
  \node at (2.60,0.25) {$[{v}_{1}]_{\ell}$};
  \node at (3.55,0.25) {$[\overline{{v}_{1}}]_{\ell}$};
  \node at (4.80,0.25) {$[{v}_{2}]_{\ell}$};
  \node at (5.75,0.25) {$[\overline{{v}_{2}}]_{\ell}$};
  \node at (6.55,0.25) {$\cdots$};
  \node[left] at (0.80,-0.45) {$[{u}_{1}]_{k}$};
  \node[left] at (0.80,-1.35) {$[\overline{{u}_{1}}]_{k}$};
  \node[left] at (0.80,-2.15) {$[{u}_{2}]_{k}$};
  \node[left] at (0.80,-3.05) {$[\overline{{u}_{2}}]_{k}$};
  \node[left] at (0.70,-3.75) {$\vdots$};
  \foreach \yy in {-0.45,-1.35,-2.15,-3.05}
    {\node[blackdot] at (1.30,\yy) {};}
  \draw[blueedge] (1.30,-0.45) -- (1.30,-1.35);
  \draw[blueedge] (1.30,-2.15) -- (1.30,-3.05);
  \xblock{3.075}{-0.9}
  \xblock{5.275}{-0.9}
  \xblock{3.075}{-2.6}
  \xblock{5.275}{-2.6}
\end{tikzpicture}%
}
\caption{Equivalence classes for $\sim_{k,\ell}$ when
$m\not\equiv 0\pmod{2^{k}}$ and $n\equiv 0\pmod{2^{\ell}}$.}
\label{fig:case1b}
\end{figure}

\item \underline{Case~2:} $m\equiv 0\pmod{2^{k}}$.\enspace
By~\cref{pro:counting_classes}, there exists exactly one
$k$-binomial class of length-$m$ factors of~$\infw{t}$ that is
fixed by bit-wise complement. Let ${s}\in\Fac_{m}(\infw{t})$ be a
representative, so that ${s}\sim_{k}\overline{{s}}$. For every column
${P}_{j}$ of a $k\times\ell$ pattern~${P}=
\begin{pmatrix}
  P_1&\cdots&P_\ell
\end{pmatrix}
$,
\[
  \binom{{s}}{{P}_{j}}
  =\binom{\overline{{s}}}{{P}_{j}}
  =\binom{{s}}{\overline{{P}_{j}}},
\]
the first equality by ${s}\sim_{k}\overline{{s}}$ and the second
by~\eqref{eq:bitwise}.  The
expansion~\eqref{eq:develop} then gives
\[
  \binom{{s}\oplus {v}}{{P}}=\sum_{w\in\{0,1\}^\ell}\binom{v}{w}\binom{s}{\overline{P_1}^{w_1}}\cdots \binom{s}{\overline{P_\ell}^{w_\ell}}
  =\binom{n}{\ell}\prod_{j=1}^{\ell}\binom{{s}}{{P}_{j}},
\]
which is independent of~${v}$. Thus all pairs
$([{s}]_{k},[{v}]_{\ell})$ yield a single $(k,\ell)$-equivalence
class, which we denote $[{s}\oplus {v}]_{k,\ell}$.

For every $[{u}]_{k}\neq[{s}]_{k}$, the class $[{u}]_{k}$ is not fixed
by bit-wise complement, and~\cref{pro:reconstruction} applies: the
restriction of the map
$([{u}]_{k},[{v}]_{\ell})\mapsto[{u}\oplus {v}]_{k,\ell}$ to
$[{u}]_{k}\neq[{s}]_{k}$ is exactly $2$-to-$1$.
By~\cref{lem:disjointness}, the class $[{s}\oplus {v}]_{k,\ell}$ is
distinct from every class arising from
$([{u}]_{k},[{v}]_{\ell})$ with $[{u}]_{k}\neq[{s}]_{k}$.
This accounts for
\[
  \frac{\bigl(\bc{\infw{t}}{k}(m)-1\bigr)\cdot\bc{\infw{t}}{\ell}(n)}{2}
\]
distinct $(k,\ell)$-classes, to which we add the single class
$[{s}\oplus {v}]_{k,\ell}$ described above. In total,
\[
  \bc{\infw{T}}{k,\ell}(m,n)
  =\frac{\bigl(\bc{\infw{t}}{k}(m)-1\bigr)\cdot\bc{\infw{t}}{\ell}(n)}{2}+1,
\]
as claimed. \Cref{fig:case2} illustrates the situation
schematically.

\begin{figure}[h!t]
\centering
\scalebox{0.9}{%
\begin{tikzpicture}[
    >=stealth,
    every node/.style={font=\small},
    blackdot/.style={circle, fill=black, inner sep=1.8pt},
    blueedge/.style={blue!70!black, line width=0.9pt},
    tealedge/.style={teal, line width=0.9pt},
    orangeedge/.style={orange, line width=0.9pt},
  ]
  \newcommand{\xblock}[2]{%
    \node[blackdot] at (#1-0.45,#2+0.45) {};%
    \node[blackdot] at (#1+0.45,#2+0.45) {};%
    \node[blackdot] at (#1-0.45,#2-0.45) {};%
    \node[blackdot] at (#1+0.45,#2-0.45) {};%
    \draw[tealedge] (#1-0.45,#2+0.45) -- (#1+0.45,#2-0.45);%
    \draw[orangeedge] (#1+0.45,#2+0.45) -- (#1-0.45,#2-0.45);%
  }
  \draw[->, gray!65, thick] (2.30, 1.70) -- (7.00, 1.70)
        node[midway,above,text=black] {$\bc{\infw{t}}{\ell}(n)$};
  \draw[->, gray!65, thick] (1.00, 0.85) -- (1.00,-3.90)
        node[midway,left,text=black]  {$\bc{\infw{t}}{k}(m)$};
  \node at (2.60,1.20) {$[{v}_{1}]_{\ell}$};
  \node at (3.55,1.20) {$[\overline{{v}_{1}}]_{\ell}$};
  \node at (4.80,1.20) {$[{v}_{2}]_{\ell}$};
  \node at (5.75,1.20) {$[\overline{{v}_{2}}]_{\ell}$};
  \node at (6.55,1.20) {$\cdots$};
  \node[left] at (2.20, 0.45) {$[{s}]_{k}$};
  \node[left] at (2.20,-0.45) {$[{u}_{1}]_{k}$};
  \node[left] at (2.20,-1.35) {$[\overline{{u}_{1}}]_{k}$};
  \node[left] at (2.20,-2.15) {$[{u}_{2}]_{k}$};
  \node[left] at (2.20,-3.05) {$[\overline{{u}_{2}}]_{k}$};
  \node[left] at (2.10,-3.75) {$\vdots$};
  \fill[red!15, rounded corners=3pt] (2.30, 0.65) rectangle (6.80, 0.15);
  \foreach \xpos in {2.55,3.45,4.75,5.65}
    {\node[blackdot] at (\xpos, 0.40) {};}
  \xblock{3.075}{-0.9}
  \xblock{5.275}{-0.9}
  \xblock{3.075}{-2.6}
  \xblock{5.275}{-2.6}
\end{tikzpicture}%
}
\caption{Equivalence classes for $\sim_{k,\ell}$ when
$m\equiv 0\pmod{2^{k}}$ and $n\not\equiv 0\pmod{2^{\ell}}$.}
\label{fig:case2}
\end{figure}

\end{itemize}


It remains to consider the cases where $m<2^k$ or $n<2^\ell$.
If $n\ge 2$, put $r=\min(k,m)$ and $s=\min(\ell,n)$, so that $s\ge 2$.
On arrays of size $m\times n$, the relations $\sim_{k,\ell}$ and $\sim_{r,s}$ coincide;
likewise, the relevant word classes are unchanged when $k,\ell$ are replaced by $r,s$.
Thus \cref{pro:reconstruction} applies with parameters $r,s$.
By \cref{pro:counting_classes,rem:small_m}, the original vertical quotient has
exactly one class fixed by complement if $2^k\mid m$, and none otherwise;
the analogous statement holds for the horizontal quotient with $2^\ell\mid n$.
The same counting argument, using \cref{lem:kkequiv,lem:disjointness} for the fixed vertical class, therefore applies.
Finally, if $n=1$, one-column arrays are words, so
$\bc{\infw{T}}{k,\ell}(m,1)=\bc{\infw{t}}{k}(m)$.
Since $\bc{\infw{t}}{\ell}(1)=2$, both branches of the stated formula give this value.
\qed
\end{proof}

\begin{remark}
  We may observe an asymmetry in the formula $\frac{(\bc{\infw{t}}{k}(m)-1).\bc{\infw{t}}{\ell}(n)}{2}+1$ in \cref{thm:mainTM}, \cref{fig:case1b} and \cref{fig:case2}, this comes from the choice of the ordering in \cref{def:main} of the binomial coefficients where columns are privileged. 
\end{remark}

\section{Concluding remarks}

\cref{thm:mainTM} is a showcase for the computation of the $(k,\ell)$-binomial complexity of an infinite array of genuine interest. It is enlightening to see how the $(k,\ell)$-binomial equivalence classes are organized with respect to the $k$-binomial equivalence classes of the Thue--Morse factors. The arguments of \cref{thm:mainTM} also extend to XOR-products of binary words whose factor languages are closed under bit-wise complement. However, a generalization of this result to Thue--Morse arrays over $m$ letters is not straightforward. Indeed, the formulas and results about bit-wise complement do not seem to extend in a direct way. The exact counting of $\sim_{k,\ell}$-equivalence classes remains therefore challenging even though we conjecture a periodic behavior. For the array $\infw{T}_m$ defined by $\infw{T}_m[i,j]=\infw{t}_m[i]+\infw{t}_m[j]\bmod m$, \cref{thm:bounded_complexity} gives the bound $\bc{\infw{T}_m}{k,k}(n,n)\le \bigl(\bc{\infw{t}_m}{k}(n)\bigr)^2$. In the ternary case, $([u]_k,[v]_k)$, $([u+1]_k,[v+2]_k)$ and $([u+2]_k,[v+1]_k)$ yield the same class $[u\oplus v]_{k,k}$, where addition is considered component-wise and modulo~$3$; however, these three pairs of classes need not be distinct.

Sturmian words have been characterized by their $k$-binomial complexity in \cite{FiciPuzynina,RigoSalimov-2015,RigoStipulantiWhiteland-2024}. We recall the result: If the infinite word $\infw{x}$ is Sturmian then, for all $k\ge 2$, $\bc{\infw{x}}{k}(n)=n+1$ for all $n\ge 0$. Conversely, if there exists some $k\ge 2$ such that $\bc{\infw{x}}{k}(n)=n+1$ for all $n\ge 0$, then $\infw{x}$ is Sturmian. From \cref{cor:sturmian}, we know that if $\infw{x}$ and $\infw{y}$ are two Sturmian words, then for all $m,n\ge 1$, we have
$\bc{\infw{x}\times\infw{y}}{k,\ell}(m,n)= (m+1)(n+1)$  whenever $k,\ell\ge 2$. Can we characterize the family of infinite arrays with that property ? Could we, in particular, as investigated in one dimension \cite{Vivion2025}, characterize families of arrays whose binomial complexity is equal to factor complexity? For instance, the two-dimensional generalization of Sturmian words corresponding to an approximation of a plane introduced in \cite{BertheVuillon} is worth investigating. 

In the problem addressed above, if we allow $k=\ell=1$, then one can easily find an array~$\infw{C}$ such that $\bc{\infw{C}}{1,1}(m,n)=(m+1)(n+1)$ for all $m,n\ge 1$. 
Let $\infw{c}$ be the (binary) Champernowne word $011011100101110111\cdots$ concatenating the binary representations of the nonnegative integers in order. Since $\infw{c}$ contains both $0^n$ and $1^n$, its abelian complexity is $n+1$. By \cref{pro:complex_prod}, $\bc{\infw{c}\times\infw{c}}{1,1}(m,n)= (m+1)(n+1)$. However,  $\bc{\infw{c}\times\infw{c}}{1,1}(m,n)<\bc{\infw{c}\times\infw{c}}{k,\ell}(m,n)$ when $k>1$ and $m\ge 2$ or, $\ell>1$ and  $n\ge 2$.

\section*{Acknowledgments}

 In a final stage, we ran ChatGPT Astra to correct several minor errors.

\bibliographystyle{plainurl}
\bibliography{./bibliography}

\end{document}